\newtheorem{theorem}{Theorem}[section]
\newtheorem{lemma}[theorem]{Lemma}
\newtheorem{proposition}[theorem]{Proposition}
\newtheorem{corollary}[theorem]{Corollary}
\newtheorem{assumption}[theorem]{Assumption}
\theoremstyle{definition}
\newtheorem{remark}{Remark}[section]
\newtheoremstyle{named}{}{}{\itshape}{}{\bfseries}{.}{.5em}{\thmnote{#3}}
\theoremstyle{named}
\newtheorem*{namedtheorem}{Theorem}
\renewcommand{\H}{\mathcal{H}} 
\begin{document}

\begin{frontmatter}

\title{Counting walks by their last erased self-avoiding polygons using sieves\tnoteref{ACKNOW}}
\tnotetext[label1]{This work is supported by the French National Research Agency (ANR) throught the ANR-19-CE40-0006 project \textsc{Al}gebraic \textsc{Co}mbinatorics of \textsc{H}ikes \textsc{O}n \textsc{L}attices (\textsc{Alcohol}).}
\author[PLG]{Pierre-Louis Giscard}
\address[PLG]{Universit\'e du Littoral C\^{o}te d'Opale, EA2597-LMPA\\
Laboratoire de Math\'ematiques Pures et Appliqu\'ees Joseph Liouville,\\ Calais, France.
\ead[Web page]{http://www-lmpa.univ-littoral.fr/~plgiscard/}
\ead{mailto:giscard@univ-littoral.fr}}

\begin{abstract} 
Let $G$ be an infinite, vertex-transitive lattice with degree $\lambda$ and fix a vertex on it. Consider all cycles of length exactly $l$ from this vertex to itself on $G$. Erasing loops chronologically from these cycles, what is the fraction $F_p/\lambda^{\ell(p)}$ of cycles of length $l$ whose last erased loop is some chosen self-avoiding polygon $p$ of length $\ell(p)$, when $l\to\infty$ ?   
We use combinatorial sieves to prove an exact formula for $F_p/\lambda^{\ell(p)}$ that we evaluate explicitly. We further prove that for all self-avoiding polygons $p$, $F_p\in\mathbb{Q}[\chi]$ with $\chi$ an irrational number depending on the lattice, e.g. $\chi=1/\pi$ on the infinite square lattice. In stark contrast we current methods, we proceed via purely deterministic arguments relying on Viennot's theory of heaps of pieces seen as a semi-commutative extension of number theory. Our approach also sheds light on the origin of the difference between exponents stemming from loop-erased walk and self-avoiding polygon models, and suggests a natural route to bridge the gap between both.
\end{abstract} 

\begin{keyword}
Self-avoiding polygons \sep Heaps of pieces \sep Sieves
\MSC 05C38 \sep 05A15 \sep 11N36 
\end{keyword}

\end{frontmatter}

\section{Context}
\subsection{Self-avoiding objects and loop-erased walks}
The ``widely open problem of counting Self-Avoiding Walks (SAWs) and Self-Avoiding Polygons (SAPs) on lattices'' (quoting Flajolet \& Sedgewick) was first conceived from the study of polymer chemistry, but these objects arise in a wide range of physical and mathematical problems; e.g. as phase boundaries \cite{Smirnov2010} and in percolation clusters \cite{Werner2000,Lawler2000,beffara2013} or, as B. Mandelbrot observed \cite{Mandelbrot1982}, from the outer frontier of Brownian motion \cite{Lawler2001}.  Because in such models SAWs and SAPs are invariably realised through random processes, the problem of studying and counting them has so far only been attacked with tools from statistical physics and probability theory. Works along these directions have yielded deep insights into renormalisation and conformal mappings such as the  relation between self-avoiding curves and the Schramm-Loewner Evolution (SLE) \cite{Lawler2004,beffara2008}, and the result of H. Duminil-Copin and S. Smirnov who proved the value of the connective constant of the honeycomb lattice \cite{Copin2012}. 

One of the most active subfields of this research concerns loop-erased random walks (LERWs) introduced by G. Lawler \cite{Lawler1980}. Lawler's original motivation was to produce yet another model of random generation of self-avoiding objects. Indeed, removing loops from an ordinary random walk in chronological order--the so called \textit{loop-erasing} procedure--yields the self-avoiding `skeleton' of the walk. LERWs are now the object of much research on conformal loop ensembles and loop soups \cite{Lawler20042,Lawler2007,Lawler2018}. 
At the heart of this activity are conformally invariant loop measures, one of which \cite{Lawler2018} associates to a SAP $p$ the proportion of all cycles--also known as closed walks--whose last erased loop is the SAP $p$. This measure, studied within the framework of probability theory, remains very difficult to evaluate explicitly. Indeed, this requires a complicated mapping to Abelian sand-pile models followed by arduous calculations which were completed only for the shortest SAPs (length less than 6). But the straightforward combinatorial meaning of the measure as a proportion means that a purely deterministic counting method should be able to evaluate it as well. This is indeed so, using number theoretic sieves and heaps of cycles.

\subsection{Heaps of cycles and number theory}
\hspace{5mm}The theory of heaps of pieces \cite{cartier1969,Viennot1986,Krattenthaler2006}, which describes the combinatorics of piles of arbitrary pieces, has now found a remarkable number of applications throughout mathematics, from combinatorics to group theory. Studying specifically heaps of cycles on graphs, called \emph{hikes}, P.-L. Giscard and P. Rochet showed that these constitute a very natural semi-commutative extension of number theory \cite{SIAM2017}. This extension comprises all the fundamental objects of number theory (zeta functions, sieve methods) as well as most of the relations between these. In this context, the hikes themselves extend the natural integers, the walks (a.k.a. paths) extend the integers of the form $p^k$ with $p$ prime and SAWs and SAPs extend the primes. This last observation implies that in the semi-commutative framework of the hikes,  \emph{the extension of the prime number theorem will give the asymptotic number of SAPs on regular lattices}. What is even more remarkable here is that the set of prime factors of a walk as dictated by the extension of number theory precisely coincides with the loops erased from a walk in Lawler's procedure. The fundamental premise of this work is that the tools offered by number theory are also effective in the study of self-avoiding objects.

\subsection{Layout}
In this self-contained contribution (definitions in \S\ref{DefNota}), we use purely deterministic sieve techniques  for evaluating the asymptotic fraction of all closed walks whose last erased loop is any given SAP $p$ on any finite graph (\S\ref{FiniteG}) or infinite vertex-transitive lattice (\S\ref{InfiniteG}). We arrive at an exact, closed-form, easy to evaluate formula for this fraction, immediately presented in the next section. Within the framework of probability-theory, this fraction is a conformally-invariant loop-measure known and studied by Lawler \cite{Lawler2018}. The sieves also make it clear that the loop-measure comes with seemingly hitherto unnoticed error-terms (Appendix~\ref{ErrorTerms}), which explain combinatorially the difference between LERW and SAP exponents. A route for overcoming this final hurdle using 1990s work is suggested (\S\ref{Discussion}).

\section{Statement of the main result and illustrations}
\subsection{Main theorem}
The main result, which we prove in the remainder of this work is as follows: 

\begin{namedtheorem}[Infinite Sieve Theorem]\label{InfiniteSieve}
Let $G$ be an infinite vertex transitive graph of bounded degree $\lambda$ and let $\mathsf{A}_G$ be its adjacency matrix. Let $p$ be a self-avoiding polygon on $G$. Let $\{G^{\text{Tor}}_N\}$ be a the small tori sequence of vertex-transitive graphs on $N$ vertices converging to $G$ as $N\to\infty$ (see Appendix~\ref{GraphSeq}). 
Let $R(z)$ be the ordinary generating function of all closed walks on $G$ with fixed initial vertex.
Then, on $G_N^{Tor}$, the fraction of all \underline{hikes}, i.e. heaps of cycles, which are closed walks whose last erased loop is $p$ is given asymptotically for $N\gg 1$ by
$$
\frac{\alpha^N}{N}\frac{F_p}{\lambda^{\ell(p)}}.
$$
In this expression $\alpha:=\lim_{z\to1/\lambda^-}\exp\left(\int \frac{1}{z}\big(R(z)-1\big) dz\right)$, $\alpha\in]0,1[$, is well defined and $F_p\lambda^{-\ell(p)}$ designates the fraction of all \underline{closed walks} defined up to translation on $G$ whose last erased loop is $p$. This fraction is explicitly given by 
\begin{equation}\label{FractionWalk}
\frac{F_p}{\lambda^{\ell(p)}} = \frac{1}{\lambda^{\ell(p)+1}} \,\,\mathsf{deg}^{\mathrm{T}}.\,\mathrm{adj}\left(\mathsf{I}+\mathsf{C}_G\big|_p.\mathsf{B}_p\right)\,.\,\mathsf{1},
\end{equation}
where $\mathrm{adj}(.)$ designates the adjugate operator, $\mathsf{B}_p$ is the adjacency matrix of the graph $G_p$ induced by $p$ and its immediate neighbours on $G$, $\mathsf{1}$ designates the vector full of $1$ and $\mathsf{deg}=\mathrm{diag}(\mathsf{B}_p^2)$ is the vector of vertex-degrees on $G_p$. Finally, $\mathsf{C}_G|_p$ is the restriction to $G_p$ of the matrix $\mathsf{C}_G:=\lim_{z\to1/\lambda^-}(\mathsf{I}-\mathsf{P}_\lambda)\mathsf{R}(z)$, with $\mathsf{P}_\lambda$ the projector onto the eigenspace associated with the dominant eigenvalue and $\mathsf{R}(z):=(\mathsf{I}-z\mathsf{A}_G)^{-1}$ is the resolvent of graph $G$. 
%
%Furthermore, the fractions $F_p$ are such that 
%$$
%\sum_{p:\, SAP} \frac{F_p}{\lambda^{\ell(p)}}=1.
%$$ 
% Ce dernier resultat necessite une meilleure comprehension du terme d'erreur. A venir
%Furthermore, the fraction of closed walks of length up to $\ell$ which are walk multiples of $p$ is
%$$
%\frac{F_p}{\lambda^{\ell(p)}}\left(1+O\big(1-\ell(p)/\ell\big)\right).
%$$ 
\end{namedtheorem}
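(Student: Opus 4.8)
The plan is to first establish the statement on each finite torus $G_N^{\mathrm{Tor}}$ and then pass to the limit $N\to\infty$ along the sequence. On a finite, connected, $\lambda$-regular graph the generating function of all hikes is $1/\det(\mathsf{I}-z\mathsf{A}_G)$, which has a \emph{simple} pole at $z=1/\lambda$ because the top eigenvalue is simple; likewise $R(z)=[\mathsf{R}(z)]_{vv}$ has a simple pole there whose residue is governed by $[\mathsf{P}_\lambda]_{vv}=1/N$. Every closed walk factors uniquely, in the hike monoid, into prime loops, and by the identification recalled in the introduction this factorisation is exactly Lawler's chronological loop-erasure; hence ``the last erased loop is $p$'' means precisely that the self-avoiding polygon $p$ is the terminal prime factor. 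The first task is therefore to write a generating function $W_p^{(N)}(z)$ for closed walks whose terminal factor is $p$, and to obtain the two fractions in the statement as ratios of residues at $z=1/\lambda$: the fraction among all hikes as $W_p^{(N)}$ over $1/\det(\mathsf{I}-z\mathsf{A}_G)$, and the fraction among closed walks as $W_p^{(N)}$ over $R$.

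To compute $W_p^{(N)}$ I would use a combinatorial sieve, the semi-commutative analogue of the number-theoretic sieve, to pass from the easily counted family of walks that merely contain $p$ as their final loop to the family for which $p$ is \emph{exactly} the last erased loop. The decisive structural point is that this constraint is local: it restricts only the behaviour of the walk on $G_p$, the subgraph induced by $p$ and its immediate neighbours, while all earlier loops are erased before $p$ is formed and may be summed freely. Summing those earlier excursions at each vertex of $G_p$ produces a factor governed by the graph's return generating function, i.e. by the resolvent $\mathsf{R}(z)$ restricted to $G_p$; the inclusion--exclusion that simultaneously enforces the self-avoidance of $p$ and the ``last'' condition is then a determinant over $G_p$, which is why the cofactor, i.e. the adjugate, of a matrix of the shape $\mathsf{I}+(\text{restricted resolvent})\,\mathsf{B}_p$ appears. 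Contracting this cofactor matrix against the entry/exit data of the distinguished loop, summing over its attachment points and their degrees, yields the vectors $\mathsf{deg}$ and $\mathsf{1}$ together with the overall $\lambda^{-(\ell(p)+1)}$ normalisation.

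Forming the ratio of residues at $z=1/\lambda$ then selects the finite part of the restricted resolvent rather than its full value: the dominant simple pole, carried by $\mathsf{P}_\lambda$, cancels between numerator and denominator, leaving precisely $\mathsf{C}_G|_p=\big[\lim_{z\to1/\lambda^-}(\mathsf{I}-\mathsf{P}_\lambda)\mathsf{R}(z)\big]\big|_p$ inside the adjugate. This is what converts the bare matrix identity into formula~\eqref{FractionWalk} for the translation-invariant fraction $F_p/\lambda^{\ell(p)}$, a quantity depending only on the local data $(G_p,\mathsf{B}_p,\mathsf{C}_G|_p)$.

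Finally I would pass to the infinite lattice along the tori sequence. Since $\{G_N^{\mathrm{Tor}}\}$ converges to $G$ locally, the neighbourhood $G_p$ and the regularised resolvent $\mathsf{C}_G|_p$ stabilise, so $F_p/\lambda^{\ell(p)}$ is already the infinite-volume value and what remains $N$-dependent is the global normalisation. There the vertex-transitive identity $[\mathsf{P}_\lambda]_{vv}=1/N$ supplies the factor $1/N$, while the regularised spectral product $\prod_{i\ge2}(1-\mu_i/\lambda)$ supplies $\alpha^{N}$; convergence of the densities of states of $G_N^{\mathrm{Tor}}$ to the spectral measure $\rho$ of $G$ gives $\tfrac1N\log\prod_{i\ge2}(1-\mu_i/\lambda)\to\int\log(1-\mu/\lambda)\,d\rho(\mu)=\log\alpha$, which is the spectral counterpart of the integral representation of $\alpha$ in the statement. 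The hardest step is the sieve of the second paragraph: one must prove rigorously that the condition ``last erased loop $=p$'' factors into a purely local determinant over $G_p$ times a freely summed global excursion factor, and that it is the dominant-eigenvalue-subtracted resolvent $\mathsf{C}_G$, and not the full resolvent, that survives the residue ratio. The mismatch concealed in this subtraction is exactly what produces the error terms of Appendix~\ref{ErrorTerms} and, conjecturally, the gap between the loop-erased-walk and self-avoiding-polygon exponents.
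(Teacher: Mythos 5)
Your plan reproduces the paper's architecture: work on the finite tori, split the target fraction into (hikes $\to$ closed walks) $\times$ (closed walks $\to$ walks with last erased loop $p$), obtain the second factor from a Viennot-type determinant in which the dominant spectral contribution cancels leaving the regularised resolvent $\mathsf{C}_G|_p$, and recover $\alpha$ from the limiting spectral measure. Within that shared skeleton you make one genuinely different and arguably cleaner move: you get the normalisation $\alpha^N/N$ as a ratio of residues of $R_N(z)$ and $\zeta_N(z)$ at the common simple pole $z=1/\lambda$, the residue of $R_N$ contributing $[\mathsf{P}_{N,\lambda}]_{\bullet\bullet}=1/N$ and that of $\zeta_N$ contributing $\alpha_N^{-1}=\prod_{i\ge2}(1-\mu_i/\lambda)^{-1}$. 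The paper instead proves its Lemma~\ref{WalkstoHikes} by expanding $\det(\mathsf{I}-\tfrac1\lambda\mathsf{A}_N+\tfrac1\lambda\mathsf{B}_{N,\bullet})$ order by order around the singular matrix, which requires the corolla computation and the recursion identities $c_0=0$, $c_1=-1$, $\sum_i n_{2,i}c_{2,i}=-\lambda^2$ of Appendix~\ref{Recursion}; your residue route avoids that entirely (at the price of losing the template for the all-orders expansion of Appendix~\ref{DetExpansion}). Your fourth paragraph's density-of-states identity $\tfrac1N\sum_{i\ge2}\log(1-\mu_i/\lambda)\to\log\alpha$ is exactly the content of Lemma~\ref{RootedHikesLemma} and its square-lattice corollary.

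Two steps are asserted rather than proved and would need to be filled in. First, the central identity $R_p(z)=z^{\ell(p)}\det(\mathsf{I}+z\mathsf{R}(z)\mathsf{B}_p)$: your justification is a locality heuristic (``earlier loops may be summed freely''), whereas the actual mechanism is a Möbius/inclusion--exclusion over self-avoiding hikes on $G\setminus p$ giving $\det(\mathsf{I}-z\mathsf{A}_{G\setminus p})/\det(\mathsf{I}-z\mathsf{A}_G)$ (Viennot's lemma), followed by the purely algebraic localisation $\det(\mathsf{I}-z\mathsf{A}_{G\setminus p})=\det(\mathsf{I}-z\mathsf{A}_G)\det(\mathsf{I}+z\mathsf{R}(z)\mathsf{B}_p)$, which collapses to a finite determinant only because $\mathsf{B}_p$ is supported on $G_p$ (Proposition~\ref{InfiniteViennot}). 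Second, the appearance of $\mathsf{deg}$ and $\mathsf{1}$ is not a generic ``contraction against entry/exit data'': it comes from the rank-one factorisation $\mathsf{P}_{N,\lambda}\mathsf{B}_p=\tfrac1N\,\mathsf{1}.\mathsf{deg}^{\mathrm{T}}$ (valid because the tori are regular, so $\mathsf{P}_{N,\lambda}=\tfrac1N\mathsf{J}$) combined with the matrix-determinant lemma; the rank-one nature of the polar part is also what guarantees the determinant is \emph{linear} in the divergence, so that a single power of the pole cancels in your ratio. Neither point invalidates your strategy, but both are where the real work lives, and you correctly flag the first as the hardest step.
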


\begin{corollary}
Let $G$ be an infinite vertex-transitive lattice of degree $\lambda$. Let $p$ be a self-avoiding polygon on $G$ and let $F_p\,\lambda^{-\ell(p)}$ be the fraction of all closed walks whose last erased loop is the SAP $p$. 
\begin{itemize}
\item[-]If $G$ is a $d$-dimensional hypercubic lattice, then $F_p\in\mathbb{Q}[1/\pi^{d-1}]$.
\item[-]If $G$ is the triangular, hexagonal or Kagom\'e lattice, then $F_p\in\mathbb{Q}[\sqrt{3}/\pi]$.
%\item[-]If $G$ is the hexagonal lattice, then $F_p\in\mathbb{Q}[\sqrt{3}/\pi]$.
\end{itemize}
\end{corollary}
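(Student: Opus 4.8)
The plan is to read off from the closed form \eqref{FractionWalk} that $F_p$ depends on the lattice only through the finitely many entries of $\mathsf C_G|_p$, and then to pin down the arithmetic nature of those entries. Writing $\chi=1/\pi^{d-1}$ (resp.\ $\chi=\sqrt3/\pi$), the scalar $\tfrac1{\lambda^{\ell(p)+1}}$, the integer vectors $\mathsf{deg}$ and $\mathsf1$, and the integer adjacency matrix $\mathsf B_p$ all have entries in $\mathbb Q\subset\mathbb Q[\chi]$. Since $\mathrm{adj}(\cdot)$ is, entrywise, a polynomial with integer coefficients in the entries of its argument, and since $\mathbb Q[\chi]$ is a ring, \eqref{FractionWalk} exhibits $F_p$ as a polynomial with rational coefficients in the entries $(\mathsf C_G|_p)_{uv}$, $u,v\in G_p$. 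The corollary therefore reduces to the single claim that the combinations of these entries occurring in \eqref{FractionWalk} lie in $\mathbb Q[\chi]$.

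Next I would make the entries of $\mathsf C_G$ explicit via the tori sequence. On $G_N^{\mathrm{Tor}}$ the dominant eigenvector is constant, so $\mathsf P_\lambda=\tfrac1N\mathsf1\mathsf1^{\mathrm T}$ and $(\mathsf I-\mathsf P_\lambda)\mathsf R(z)$ removes exactly the resolvent pole at $z=1/\lambda$. Diagonalising in the Fourier basis of the torus and letting $N\to\infty$ turns each entry into a Brillouin-zone integral; concretely $(\mathsf C_G)_{uu}-(\mathsf C_G)_{uv}$ converges to the \emph{potential kernel}
\[
a(u-v)=\frac{1}{(2\pi)^{d}}\int_{[-\pi,\pi]^{d}}\frac{1-e^{\mathrm{i}k\cdot(u-v)}}{1-\mu(k)/\lambda}\,\mathrm{d}^{d}k,
\]
where $\mu(k)$ is the symbol of $\mathsf A_G$ (so $\mu(k)=2\sum_j\cos k_j$ on the hypercubic lattice), the numerator's vanishing at $k=0$ exactly taming the band-edge singularity. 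The task thus becomes the evaluation of these band-edge lattice Green's functions at the finitely many displacements $u-v$ occurring in $G_p$.

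For the evaluation I would integrate the $d$-fold Fourier integrals one variable at a time, closing contours and collecting residues, reducing each $d$-dimensional integral to a $(d{-}1)$-dimensional one and ultimately to elementary integrals; the classical recurrences between nearest-neighbour lattice Green's functions, together with the relation $\tfrac1\lambda\mathsf A_G\,a=a+\delta_0$ expressing the discrete harmonicity of $a$ off the origin, then pin every value down in closed form. On the hypercubic lattice this produces rational numbers dressed by a single surviving transcendental factor, $1/\pi^{d-1}$, matching the $d=2$ prototypes $a(1,1)=4/\pi$ and $a(2,0)=4-8/\pi$. For the triangular, hexagonal and Kagom\'e lattices the same reduction runs over the hexagonal Brillouin zone, whose fundamental domain carries a factor $\sqrt3$ from the $60^\circ$ geometry of the reciprocal lattice; being two-dimensional, these contribute exactly the generator $\sqrt3/\pi$ (the honeycomb and Kagom\'e cases, having a multi-point basis, being first folded onto the underlying triangular Bravais integrals).

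The main obstacle I anticipate is \emph{not} the combinatorial reduction but the arithmetic of the absolute normalisation. The diagonal entries $(\mathsf C_G)_{uu}$ need not lie in $\mathbb Q[\chi]$ — on $\mathbb Z^3$, for instance, the band-edge return Green's function is Watson's integral, a $\Gamma$-value transcendental provably \emph{outside} $\mathbb Q[1/\pi^2]$, while on $\mathbb Z^2$ the diagonal diverges logarithmically — so the argument must show that these constants cancel throughout \eqref{FractionWalk}, leaving only the differences $a(u-v)$ that genuinely belong to the ring. Establishing this cancellation, together with the closed-form evaluation confirming that the surviving potential-kernel values lie in $\mathbb Q[1/\pi^{d-1}]$ respectively $\mathbb Q[\sqrt3/\pi]$, is the crux; everything else is bookkeeping with the ring structure of $\mathbb Q[\chi]$.
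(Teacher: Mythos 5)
Your reduction is exactly the one the paper intends: since $\mathrm{adj}(\cdot)$ is entrywise an integer polynomial in the entries of its argument, and $\mathsf{B}_p$, $\mathsf{deg}$, $\mathsf{1}$ and $\lambda^{-\ell(p)-1}$ are rational, Eq.~(\ref{FractionWalk}) places $F_p$ in the ring generated over $\mathbb{Q}$ by the entries of $\mathsf{C}_G|_p$, so the whole corollary rests on those entries lying in $\mathbb{Q}[\chi]$. Where you diverge from the paper is in how that last point is settled. The paper does not run a Brillouin-zone/contour-integration argument, nor does it need a cancellation of diagonal constants inside the adjugate: $\mathsf{C}_G$ is defined with the spectral projector already subtracted, the Lemma of Appendix~\ref{Recursion} asserts $c_0=0$ outright, and the resolvent recursions seeded by $c_0=0$, $c_1=-1$ together with the explicit integral given after the main theorem pin every square-lattice entry down to the form $q_0+q_1/\pi$ with $q_i\in\mathbb{Q}$. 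For $d=2$ your potential-kernel detour via $a(u-v)=(\mathsf{C}_G)_{uu}-(\mathsf{C}_G)_{uv}$ reproduces the same numbers but is heavier than necessary, and your worry about a logarithmically divergent diagonal is precisely what the $(\mathsf{I}-\mathsf{P}_\lambda)$ subtraction is there to cure.

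That said, your instinct about $d\geq 3$ identifies the one genuinely unfinished step, in your proposal and in the paper alike. On the simple cubic lattice the band-edge Green's function values are of Watson--Glasser--Zucker type, living in $\mathbb{Q}\,g_0+\mathbb{Q}/(\pi^2 g_0)+\mathbb{Q}$ with $g_0$ a product of $\Gamma$-values, and neither the differences $G(0)-G(x)$ nor the projector-subtracted entries are \emph{obviously} in $\mathbb{Q}[1/\pi^2]$; the subtraction $\mathsf{P}_{N,\lambda}\mathsf{R}_N(z)=\tfrac{1}{N(1-z\lambda)}\mathsf{J}$ is $O(1/N)$ entrywise at fixed $z<1/\lambda$ and so cannot by itself remove a finite $\Gamma$-value constant in $d\geq3$. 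Your proposal announces that the residue computation will exhibit the required cancellation but does not carry it out, and the paper offers no proof either, so for the higher-dimensional hypercubic case (and, to a lesser extent, for the hexagonal and Kagom\'e foldings) the claim remains a plan rather than a proof. A complete argument must either execute the closed-form evaluation and display the disappearance of the $\Gamma$-constants from the specific combination in Eq.~(\ref{FractionWalk}), or confine the statement to the two-dimensional lattices where the Appendix~\ref{Recursion} data settles it.
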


The error terms generated by the sieve on infinite graphs are given in Appendix~\ref{ErrorTerms} together with a discussion of their relevance for analytical estimates of the asymptotic growth of the number of SAPs of length $\ell$ as $\ell\to\infty$. See also the discussion of \S\ref{Discussion}.\\ 

From a practical point of view, we observe that the matrix $\mathsf{B}_p$ is of size $e(p)\times e(p)$, where $e(p)$ is the number of vertices of $G$ at distance at most 1 from $p$. Since clearly $\ell(p)\leq e(p)\leq \lambda \ell(p)$, computing the fraction $F_p/\lambda^{\ell(p)}$ costs $O\big(\ell(p)^3\big)$ operations.\\ 

The lattice constant $\alpha$ relates the \emph{densities} of walks and of hikes on the infinite lattice $G$. On the square lattice, $\alpha=\frac{1}{4} e^{\frac{4 C}{\pi }}\simeq 0.8025...$ with $C$ Catalan's constant.\\ 

The matrix $\mathsf{C}_G$ is easy to obtain on regular graphs because its entries obey the same recursion relations as the graph resolvent. More precisely, let $\big(\mathsf{C}_G\big)_{m,n}$ designate the entry of the matrix corresponding to jumping from vertex $m$ to vertex $n$. Then $\lambda\big(\mathsf{C}_G\big)_{m,n} = \sum_{i\in\mathcal{N}(n)}\big(\mathsf{C}_G\big)_{m,i}+\sum_{j\in\mathcal{N}(m)}\big(\mathsf{C}_G\big)_{j,n}$, where $\mathcal{N}(n)$ and $\mathcal{N}(m)$ designate the set of vertices that are neighbours to $n$ and $m$ on $G$, respectively. 
On the square lattice this implies that $\mathsf{C}_G$ has the following explicit expression:
$$
\big(\mathsf{C}_G\big)_{ij}=-\frac{1}{\pi}\int_{0}^{\infty}\frac{1}{\tau}\left(1-\left(\frac{\tau-\mathbbm{i}}{\tau+\mathbbm{i}}\right)^{x_{ij}-y_{ij}}\left(\frac{\tau-1}{\tau+1}\right)^{x_{ij}+y_{ij}}\right)d\tau
$$
where $\mathbbm{i}^2=-1$, $x_{ij}$ and $y_{ij}$ are the distance along $x$ and $y$ between vertices $i$ and $j$ of $G_p$, respectively. In particular if $x_{ij}=y_{ij}=m$ then 
$$
\big(\mathsf{C}_G\big)_{ij}=-\frac{4}{\pi}\sum_{k=1}^{m-1}\frac{1}{2k+1}=-\frac{2}{\pi}\left(H_{m-\frac{1}{2}}+\log (4)\right),
$$ 
with $H_m$ the $m$th harmonic number.
Explicit expressions for $\mathsf{C}_G$ have already been determined on many more lattices owing to its relation with lattice Green's functions and the resistor problem \cite{Atkinson1999,Cserti2011}.

\subsection{Illustrations}
\begin{figure}[!t]
\begin{center}
\vspace{-3mm}
\includegraphics[width=1\textwidth]{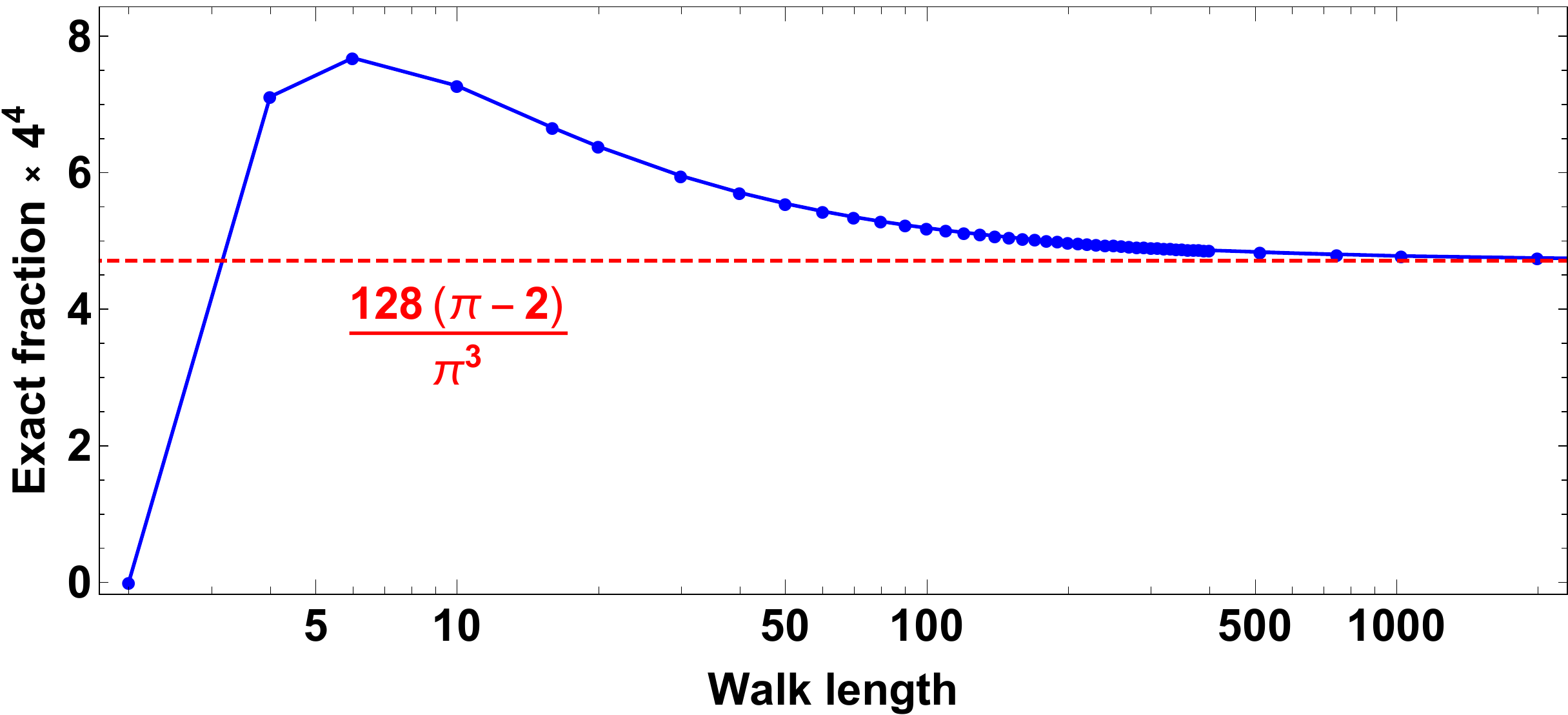}
\caption{Exact fraction of closed walks of length $\ell$ on the infinite square lattice whose last erased loop is a $1\times 1$ square as a function of the length of these walks. The exact fraction was obtained from the extension of Viennot's lemma to infinite graphs Proposition~\ref{InfiniteViennot} and Eq.~(\ref{R11}). The exact fraction converges to its asymptotic value proportionally with the inverse of the walk length, as dictated by an analysis of the error terms associated with the Infinite Sieve Theorem, see Appendix~\ref{ErrorTerms}.}
\label{fig:ConvergenceSquare}
\end{center}
\end{figure}
~\\[-1em]
We may now illustrate the Infinite Sieve Theorem with concrete results on the infinite square lattice.\\

\noindent $\blacktriangleright$ The fraction of closed walks whose last erased loop is a given edge $e$ is 
$$
\frac{F_e}{4^2} = \frac{1}{8}=0.125.$$
Since a point is connected to 4 edges, this means that an edge is the last erased loop of $1/2$ of all closed walks on the square lattice. In the language of the semi-commutative extension of number theory that holds for walks and hikes on graph, this indicates that $1/2$ of all closed walks on the square lattice whose unique right prime divisor is an edge cycle.\\ 

\noindent $\blacktriangleright$ The fraction of closed walks whose last erased loop is a $1\times1$ square is
\begin{equation}\label{FractionSquare}
\frac{F_{1\times1}}{4^4}=\frac{128 (\pi-2)}{4^4\pi^3}\simeq 0.0184.
\end{equation}
This result was first obtained via a complicated mapping between $F_p$ and Abelian sand-pile models \cite{Majumdar1991, Manna1992}. According to the authors this process ``becomes very tedious'' for longer self-avoiding polygons, so that very few explicit values for $F_p$ have been published in the literature so far, and all pertain to self-avoiding polygons of length $\ell\leq 6$.   

See Fig.~(\ref{fig:ConvergenceSquare}) for an illustration of the convergence of the fraction of closed walks on the infinite square lattice whose last erased loop is a $1\times 1$ square to the above number. Here the extension of Viennot's lemma to infinite graphs yields the ordinary generating function of closed walks whose last erased loop is an oriented $1\times 1$ square, which we denote $R_{1\times1}(z)$, as
\begin{align}\label{R11}
R_{1\times1}(z)&=\frac{1}{256\, \pi ^4
   z^4}\Big(\left(16 z^2-1\right)\, K(16 z^2)+E(16 z^2)\Big)^{\!2}\times\\
&\hspace{-10mm}   \Big(\left(1-16 z^2\right) K(16 z^2)^2+2 K(16 z^2) \left(8 \pi 
   z^2-E(16 z^2)\right)-4 \pi ^2 z^2+E(16 z^2)^2\Big),\nonumber\\
   &=z^4+12\, z^6+144\, z^8+1804\, z^{10}+23464\, z^{12}+\cdots,\nonumber
\end{align}
\begin{figure}
\begin{center}
\includegraphics[width=1\textwidth]{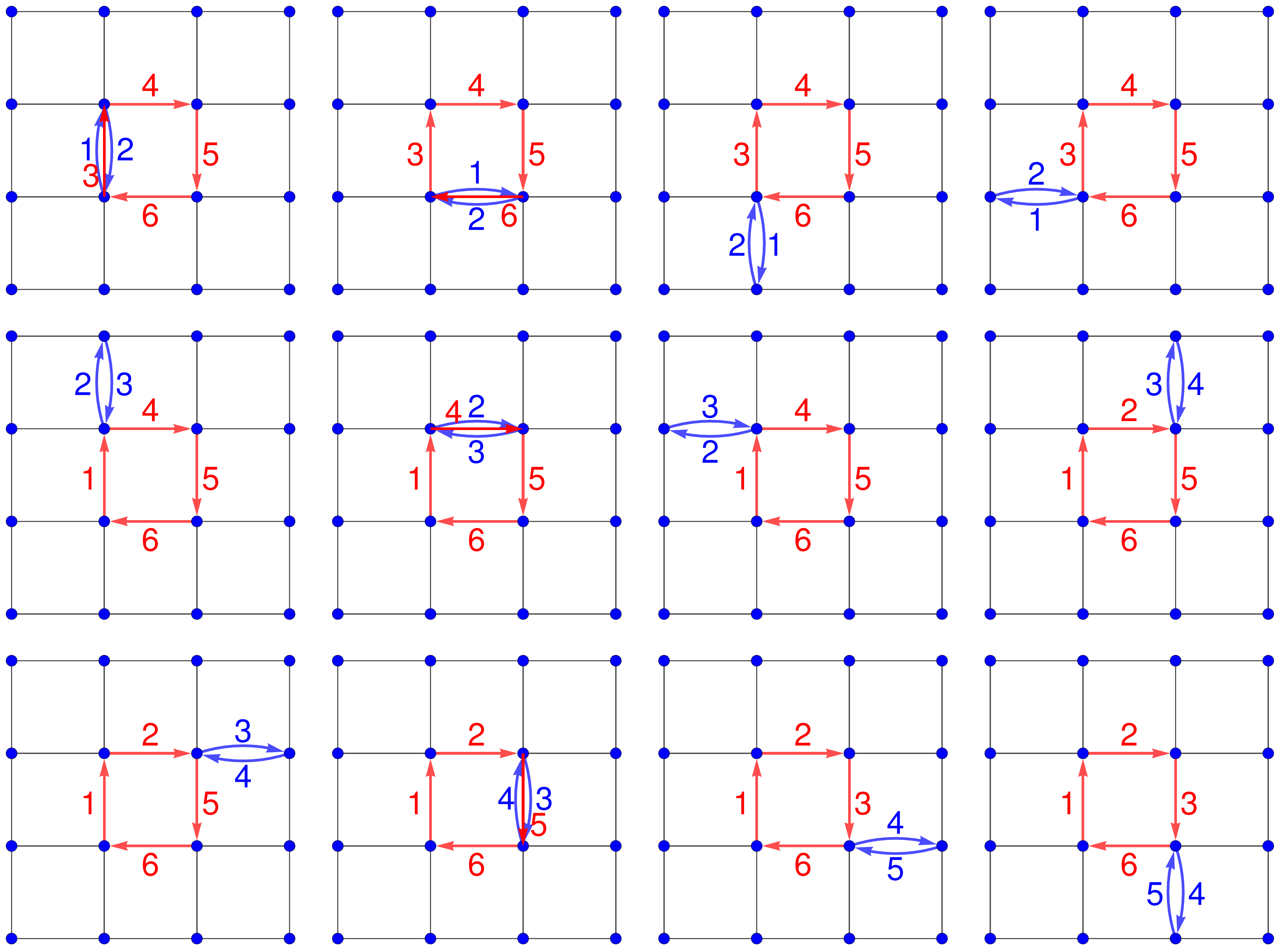}
  \end{center}
  \vspace{-7.5mm}
 \caption{\label{SquareMultiples}{\footnotesize Illustration of the 12 walks of length 6 on the square lattice whose last erased loop is right-oriented $1\times 1$ square cycle (highlighted in red). The numbers next to the highlighted edges indicate the order in which these edges are traversed.}}
  \vspace{-2mm}
\end{figure}
where $K(x):=\int_0^{\pi/2}\big(1-x\sin^2(\theta)\big)^{\!-1/2}d\theta$ and $E(x):=\int_0^{\pi/2}\big(1-x\sin^2(\theta)\big)^{\!1/2}d\theta$ are the complete elliptic integrals of the first and second kind, respectively. In Fig.~(\ref{SquareMultiples}) we illustrate the 12 walks of length 6 on the square lattice whose last erased loop is a $1\times 1$ oriented square, as correctly counted by the coefficient of $z^6$ in $R_{1\times 1}(z)$, denoted $[z^6]R_{1\times 1}(z)=12$.
Eq.~(\ref{FractionSquare}) establishes that asymptotically 
$$
[z^{2n}]R_{1\times1}(z)\sim\frac{128 (\pi-2)}{4^4\pi^3}\binom{2n}{n}^{\!\!2},\text{ as }n\to\infty.
$$

\noindent $\blacktriangleright$ The fraction of closed walks whose last erased loop is a $1\times2$ rectangle is
$$
\frac{F_{1\times2}}{4^6}=\frac{32 (\pi-8)(\pi-4)(3\pi-8)(3\pi-4)}{4^6\pi^4}\simeq 0.002585.
$$
$\blacktriangleright$ The fraction of closed walks whose last erased loop is a $1\times3$ rectangle is
\begin{align*}
\frac{F_{1\times 3}}{4^8}&=\frac{1024 (16-3 \pi) (64+3 (\pi -12) \pi ) (64+27 (\pi -4) \pi ) (128+\pi  (27 \pi -124))}{4^8\times 81 \pi ^7},\\
&\simeq 0.00035499.
\end{align*}
$\blacktriangleright$ The fraction of closed walks whose last erased loop is a $2\times2$ square is
$$
\frac{F_{2\times 2}}{4^8}=\frac{32768 (\pi -8)^2 (\pi -4) (3 \pi -8)^3 (9 \pi -32)}{4^8\times 81 \pi ^7}\simeq 0.00044623.
$$
~\\[-1em]

\noindent $\blacktriangleright$ As an example of longer SAP, consider:
\begin{figure}[h!]
\begin{center}
\vspace{-2mm}
\includegraphics[width=.2\textwidth]{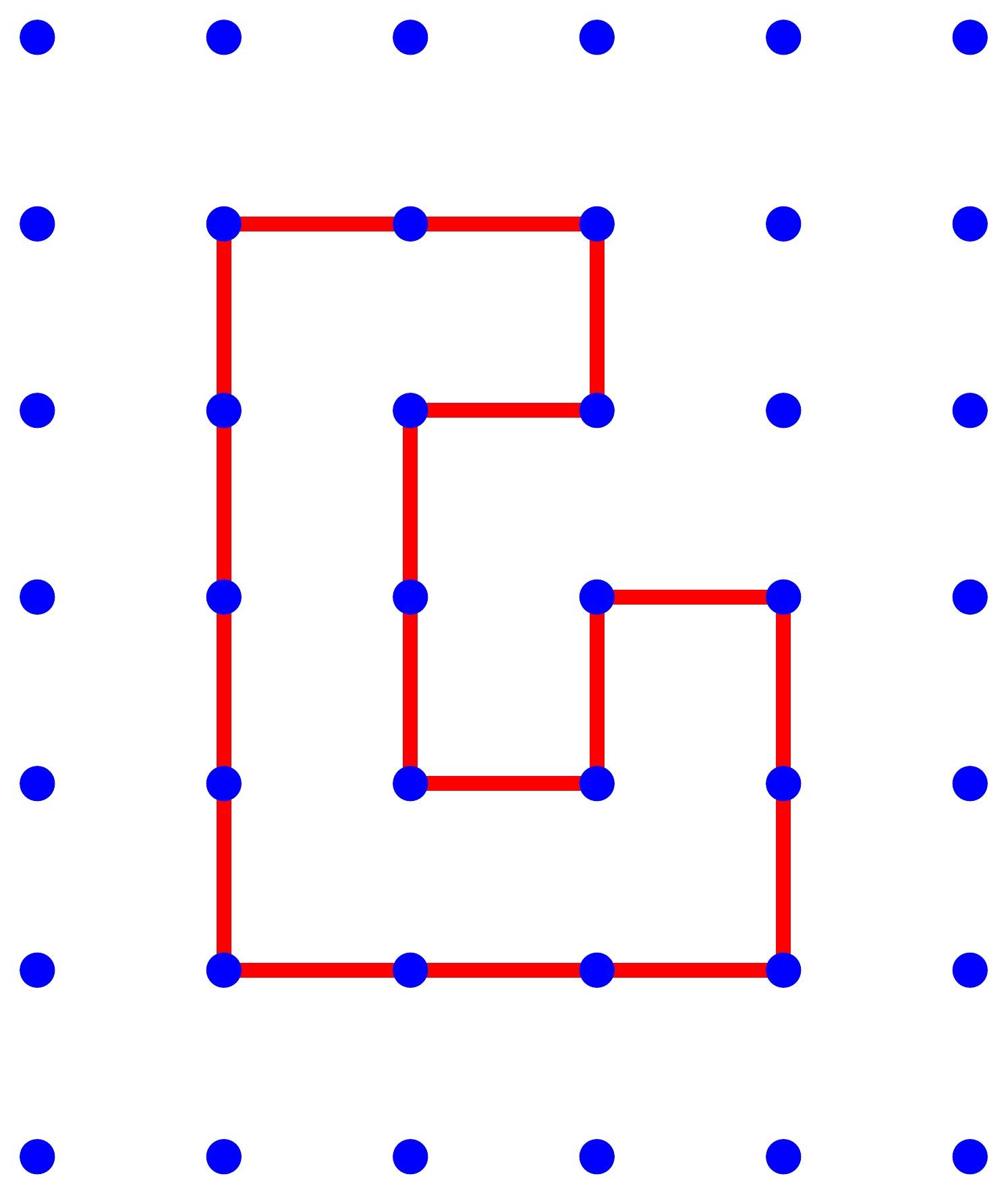}
\end{center}
\vspace{-6mm}
\end{figure}

\noindent Then the fraction of all closed walks whose last erased loop is this SAP is
\begin{align*}
\frac{F_{p}}{4^{18}}=&\frac{8388608}{4^{18}\times8303765625\, \pi ^{12}}\Big(1721510367131231944781594624\\
&-6733029120634416611029155840 \pi +12001725045126647537146527744 \pi ^2\\
&-12895675745638007921939841024
   \pi ^3+9303982639359984674575220736 \pi ^4\\
   &-4748903115679537036020154368 \pi
   ^5+1758418560456019196044640256 \pi ^6\\
   &-475910723284488375970037760 \pi
   ^7+93430267561362281294131200 \pi ^8\\
   &-12973459941155225172708000 \pi ^9+1209211981439562793530000
   \pi ^{10}\\
   &-67906363349663583525000 \pi ^{11}+1736896666805181140625 \pi ^{12}\Big),\\
   &\hspace{-4.4mm}\simeq 7.7644\times 10^{-9}.\\[-.7em]
\end{align*}
This result is well beyond what is realistically achievable from conformally invariant measures mapped to Abelian sandpile models. 

Fractions of walks whose last erased loop is a much longer SAP are easily obtained numerically, costing no more than $O(\ell(p)^3)$ to compute as outlined earlier. For example, the fraction of closed walks whose last erased loop in a $70\times 70$ square is
\begin{equation}\label{70Square}
\frac{F_{70\times 70}}{4^{280}}\simeq 1.5236\times10^{-108}.
\end{equation}
Analytically speaking these fractions become very involved very quickly as a function of SAP length and there is no reason to believe that there exists a simpler expression for them than that given by Eq.~(\ref{FractionWalk}) of the Infinite Sieve Theorem.  
For example, the analytical expression for the fraction of closed walks whose last erased loop is a $6\times 6$ square already involves sums and products of up to 16-digits prime integers. In fully expanded form this fraction involves a 67-digits prime integer (!). Overall, we have calculated the fractions for over 100,700,000 SAPs \emph{analytically}, and for more than 3,480,000,000 SAPs numerically on the square lattice. These results as well as the accompanying algorithm will soon be presented in a separate contribution. The hexagonal and triangular lattices will be also be treated.

\section{Definitions}\label{DefNota}
\subsection{Hikes and related objects}\label{Notation1}
In the general setting, we consider (weighted di)graphs $G = (\mathcal{V} ;\mathcal{E})$ with $N=|\mathcal{V}|$ nodes and $M=|\mathcal{E}|$ edges, both of which may be infinite but the degree of $G$ must be bounded. The ordinary adjacency matrix of $G$ is denoted $\mathsf{A}_G$ or simply $\mathsf{A}$. If $G$ is weighted then the entry $\mathsf{A}_{ij}$ is the weight of the edge $e_{ij}$ from $i$ to $j$ if this edge exists, and 0 otherwise. The labelled adjacency matrix of $G$ is denoted $\mathsf{W}$ and its entries are formal variables belonging to the Cartier-Foata monoid, $\mathsf{W}_{ij}=e_{ij}$.\\[-.85em]

A \textit{induced subgraph} $H$ of $G$, also called simply a \textit{subgraph} of $G$ and denoted $H\prec G$, is a set of vertices $ \mathcal{V}_H\subseteq \mathcal{V}$ together with the set of all edges linking these vertices in $G$, $\mathcal{E}_H=\{e_{ij}\in\mathcal{E}:\,i,j\in\mathcal{V}_H\}$.\\[-.85em]  

A \textit{walk} $w$ of length $\ell(w)$ from $v_i$ to $v_j$ on $G$ is a sequence $w = e_{i i_1} e_{i_1 i_2} \cdots e_{i_{\ell-1} j}$ of $\ell$ contiguous edges. The walk $w$ is \textit{open} if $i \neq j$ and \textit{closed} otherwise.\\[-.85em]

A \textit{simple cycle}, also known in the literature under the names \textit{loop}, \textit{cycle}, \textit{elementary circuit} and \textit{Self-Avoiding Polygon} or \textit{SAP}, is a closed walk $w = e_{i i_1} e_{i_1 i_2} \cdots e_{i_{\ell-1} i}$ which does not cross the same vertex twice, that is, the indices $i,i_1,\hdots,i_{\ell-1}$ are all different. Two simple cycles differing only by orientation are considered distinct (Rule 1), but two simple cycles differing only by their starting point are taken to be identical (Rule 2). The necessity of these choices and of the definition of hikes below can be found in Cartier and Foata's foundational work \cite{cartier1969}. From now on, we employ the letter $\mathcal{P}$ to designate sets of simple cycles, in particular $\mathcal{P}_G$ will be the set of all simple cycles on a graph $G$.\\[-.85em]

\begin{figure}
  \vspace{-20mm}
\begin{center}
\includegraphics[width=1\textwidth]{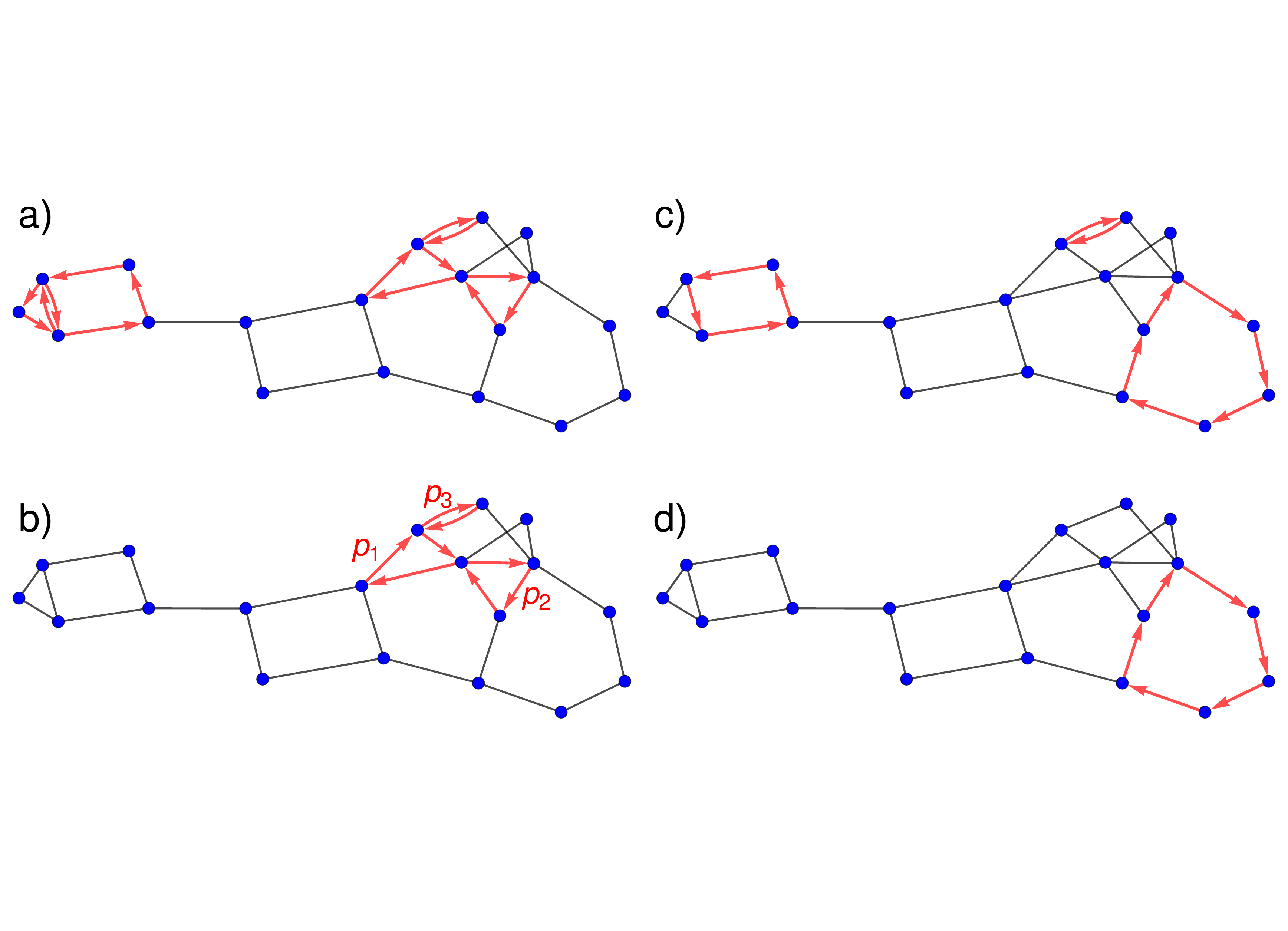}
  \end{center}
  \vspace{-25mm}
 \caption{\label{HikeIllustr}{\footnotesize Illustration of the main objects of the hike family on a graph $G$: \textbf{a)} a hike comprising two walks; \textbf{b)} a walk $w=p_2p_3p_1\equiv p_3p_2p_1$; \textbf{c)} a self-avoiding hike comprising three vertex disjoint simple cycles; \textbf{d)} a simple cycle. The walk $w$ has a unique right prime divisor, here the triangle $p_1$. In the Cartier-Foata monoids, the starting point of this walk can be any of the three vertices of $p_1$ and $p_1$ will always be the last erased loop of the walk.}}
  \vspace{-2mm}
\end{figure}
The central objects of the present work are \emph{hikes}, a hike $h$ being an unordered collection of vertex-disjoint closed walks. Hikes can be also be seen as equivalence classes on words $W=p_{i_1}p_{i_2}\cdots p_{i_n}$ over the alphabet of simple cycles $p_i$ of a graph. Two words $W$ and $W'$ are equivalent if and only if $W'$ can be obtained from $W$ through allowed permutations of consecutive simple cycles. In this context, two simple cycles are allowed to commute if and only if they are vertex disjoint $\mathcal{V}(p_i)\cap \mathcal{V}(p_j)=\emptyset \iff p_ip_j=p_jp_i$.\\[-.5em] 

For example, if $p_1$ and $p_2$ commute but neither commute with $p_3$, then $p_1p_2$ and $p_2p_1$ represent the same hike, but $p_1p_3p_2$ and $p_2p_3p_1$ are distinct hikes.\\[-.5em]

The letters $p_{i_1},\cdots, p_{i_n}$ found in a hike $h$ are called its \emph{prime} divisors. This terminology is due to the observation that simple cycles obey the defining property of prime elements in the semi-commutative Cartier-Foata monoid $\mathcal{H}$ of hikes equipped with the concatenation. In the context of Viennot's theory of heaps of pieces, hikes are heaps of simple cycles modulo Rules 1 and 2. In this work, we use the terminologies ``primes" and ``SAPs" interchangeably.\\[-.7em]
 
\noindent Two special types of hikes will be important for our purpose here:\\[-1.2em]

A \emph{self-avoiding hike} is a hike all prime factors of which commute with one another. In other terms, it is collection of vertex-disjoint simple cycles. If $\mathcal{P}$ designates a set of simple cycles, we designate $\mathcal{P}^{s.a.}$ the set of all \underline{s}elf-\underline{a}voiding hikes that can be built from cycles of $\mathcal{P}$. 

In Viennot's terminology, a hike is a \emph{walk} if and only if it is a pyramid of cycles, i.e. a heap of cycle with a unique top cycle. Equivalently, this means that a hike is a \emph{walk} if and only if it has a unique right prime divisor $p$ \cite{SIAM2017}. In this situation we say that $p$ right divides the walk $w$, denoted $p|_r w$ or that $w$ is a (left) multiple of $p$. Remarkably, these notions are also identical with those produced by G. Lawler's loop erasing procedure \cite{Lawler1980}: in this framework the unique right prime divisor of a closed walk is the last erased loop of this walk. As an example, let $p_1$, $p_2$ and $p_3$ be three simples cycles such that $p_2$ and $p_3$ commute with one another, but $p_1$ commutes with neither $p_2$ nor $p_3$. Then the hike $h=p_2p_3p_1\equiv p_3p_2p_1$ is a walk whose unique right prime divisor is $p_1$, i.e. $p_1$ is the last erased loop of $h$. This is illustrated in Fig.~(\ref{HikeIllustr}).
\\[-.85em]

\subsection{Functions on hikes}
A function on hikes is a complex-valued function $f:\mathcal{H}\mapsto \mathbb{C}$. The most important example here will be that of \emph{rank function}. 

A rank function, is a function $\rho:\mathcal{H}\mapsto \mathbb{R}$ that is totally additive over the hikes, $\rho(hh')=\rho(h)+\rho(h')$ and which respects the divisibility order, i.e. $h\leq h'\Rightarrow \rho(h)\leq \rho(h')$. The reverse implication does not hold in general. Examples of rank function include the length and the number of self-crossings of a hike. In general, we will denote $\rho(h)$ the rank of $h$ as per the rank function $\rho(.)$ and will denote $\varrho$ any given rank, i.e. value taken by the function $\rho$. In the case of the length function, $\ell(h)$ is the length of hike and $l$ denotes a length. The set of hikes with a given rank $\varrho$ is $\mathcal{H}_\varrho:=\{h\in\mathcal{H},~\rho(h)=\varrho\}$. The cardinality of this set is denoted $|\mathcal{H}_\varrho|:=\text{card}(\mathcal{H}_\varrho)$, if the graph is weighted this is understood to mean the total weight carried by hikes of rank $\varrho$.\\[-.85em] 

A function $f:\mathcal{H}\mapsto \mathbb{C}$ on hikes is associated with a formal series $\sum_{h\in\mathcal{H}}f(h) h$. This series is rarely accessible as such, rather linear algebra provides tools to access a related ordinary generating function $F(z):=\sum_{h\in\mathcal{H}}f(h) z^{\ell(h)}$. Important examples of functions on hikes and their related series include:\\[-.85em]  

The \emph{zeta function} on hikes is the identity function over the hikes $\forall h\in\mathcal{H}, \zeta(h)=1$. The associated ordinary generating function will  be denoted $\zeta(z)$, it is given by
$$
\zeta(z) = \sum_{h\in\mathcal{H}}z^{\ell(h)} = \frac{1}{\det\big(\mathsf{I}-z\mathsf{A}_G\big)},
$$
where $\mathsf{A}_G$ is the adjacency matrix of the graph $G$ on which the hikes live.\\[-.85em]  

The \emph{M\"{o}bius function} on hikes is the convolution inverse of the zeta function. We have 
$$
\forall h\in\mathcal{H},~ \mu(h)=\begin{cases}(-1)^{\Omega(h)},&\text{if $h$ is self-avoiding}\\0,&\text{otherwise}\end{cases}
$$
Here $\Omega(h)$ is the \emph{prime factor counting function}, its count the number of prime right-divisors of $h$. The associated ordinary generating function will be denoted $\mu(z)$, it is given by $\mu(z)=\det\big(\mathsf{I}-z\mathsf{A}_G\big)$.\\[-.7em] 
%It may perhaps help the reader's intuition to know that in the extension of number theory satisfied by hikes, hikes are the extension of the integers, self-avoiding hikes are the square-free integers and walks are integers of the form $p^k$, with $p$ prime and $k\in\mathbb{N}$. \\[-.7em]
 
The \emph{walk von Mangoldt} function on hikes $\Lambda$ is defined as the number $\Lambda(h)$ of contiguous is the number of possible contiguous rearrangements of the edges in $h$, obtained without permuting two edges with the same starting point. This is equivalent to
$$
\forall h\in\mathcal{H},~\Lambda(h)=\begin{cases}
\ell(p),&\text{if $h$ is a walk with unique right prime divisor $p$},\\
0,&\text{otherwise}.
\end{cases}
$$
On a graph with $N$ vertices, the ordinary generating function associated with the walk von Mangoldt function is given by $\Lambda(z) = \mathrm{Tr}\big(\mathsf{I}-z\mathsf{A}_G\big)-N$.

\section{The asymptotics of hikes and walks on finite graphs}\label{FiniteG}
The aim of this section is to develop sieving tools to asymptotically count hikes satisfying certain properties on finite graphs. The main results here will be the Finite Sieve Theorem and its length corollary. Before we state and prove these results, there is an important precedent to be found in Viennot's work \cite{Viennot1986,Viennot1989}, which provides the ordinary generating functions of hikes which are closed walk multiples of any chosen prime $p$, i.e. whose last erased loop is the SAP $p$. Since the asymptotic expansion of this result is among the results obtained below, we start by recalling Viennot's result.

\begin{namedtheorem}[Viennot's lemma \cite{Viennot1986,Viennot1989}]
Let $G$ be a finite graph. Let $p$ be a prime on this graph and let $W_p:=\sum_{w:\,p|_rw} w$ be the formal series of closed walks whose unique right prime divisor is $p$. Then
$$
W_p=\frac{\det\big(\mathsf{I}-\mathsf{W}_{G\backslash p}\big)}{\det\big(\mathsf{I}-\mathsf{W}_G\big)}p.
$$
where $\mathsf{W}_{G\backslash p}$ and $\mathsf{W}_G$ designate the labelled adjacency matrices of $G\backslash p$ and $G$, respectively. 
\end{namedtheorem}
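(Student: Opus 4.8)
The plan is to prove the formula through Viennot's correspondence between closed walks and pyramids, peeling off the top cycle and then counting the heaps that remain via a Cartier--Foata inversion. First I would invoke the fact recalled above that a hike is a closed walk with unique right prime divisor $p$ precisely when it is a pyramid of simple cycles whose single top piece is $p$. Every such pyramid $w$ factors uniquely as $w=h\,p$, where $h$ is the heap obtained by erasing the top copy of $p$; conversely $h\,p$ is again a pyramid with unique top $p$ if and only if every maximal piece of $h$ fails to commute with $p$, i.e. shares a vertex with $\mathcal{V}(p)$. Indeed, any two maximal pieces of a heap are vertex-disjoint (if they touched, one would be forced below the other and could not be maximal), so stacking $p$ on top of $h$ leaves $p$ as the unique maximal piece exactly when no former maximal piece of $h$ survives, which happens iff each of them meets $\mathcal{V}(p)$. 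This sets up a bijection and gives $W_p=N_p\,p$, where $N_p:=\sum_h h$ runs over all heaps whose maximal pieces all touch $p$.

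The heaps counted by $N_p$ are exactly those with no maximal piece among $C_p$, the set of simple cycles vertex-disjoint from $p$ --- equivalently the simple cycles of the induced subgraph $G\backslash p$. So it remains to evaluate the generating function of heaps whose maximal pieces avoid a prescribed cycle-set. I would isolate this as a lemma: writing $\alpha_S:=\sum(-1)^{\Omega(T)}T$ for the signed formal sum of self-avoiding hikes built only from cycles in $S$, the series of heaps whose maximal pieces all lie in $\mathcal{T}$ equals $\alpha_{M\backslash\mathcal{T}}\,\alpha_M^{-1}$, where $M=\mathcal{P}_G$ and $\alpha_M$, having constant term $1$, is invertible as a formal series. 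Two sanity checks fix the normalisation: $\mathcal{T}=M$ recovers the series of all heaps, $\alpha_M^{-1}$, while $\mathcal{T}=\emptyset$ forces the empty heap and gives $1$.

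To prove the lemma I would run a sign-reversing involution, which is the crux of the argument. Multiplying $\alpha_M^{-1}$ on the right (on top) by $\alpha_{M\backslash\mathcal{T}}$ expands into a signed sum of heaps $g=h\,T$ obtained by stacking a self-avoiding hike $T\subseteq M\backslash\mathcal{T}$ onto a heap $h$; since the pieces of $T$ are added last they are all maximal in $g$. Fixing a total order on simple cycles, I pair each term with the one obtained by toggling --- removing from $T$, or adjoining to $T$ --- the smallest maximal piece of $g$ lying in $M\backslash\mathcal{T}$. This is legitimate because the maximal pieces of $g$ are pairwise vertex-disjoint, so the toggled piece can always leave or join the antichain $T$. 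Toggling flips the sign $(-1)^{\Omega(T)}$ and cancels these terms in pairs, leaving only the fixed points: the heaps with no maximal piece in $M\backslash\mathcal{T}$, each carried with $T=\emptyset$ and coefficient $+1$. Hence the product equals $\sum_{\max(g)\subseteq\mathcal{T}}g$, proving the lemma.

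Finally I would translate $\alpha_S$ into a determinant. The signed self-avoiding-hike series is precisely the linear-subgraph (Coates-type) expansion of $\det(\mathsf{I}-\mathsf{W})$ restricted to the vertices supporting $S$: each term is a vertex-disjoint union of directed simple cycles, weighted by the product of its edge variables and signed by $(-1)^{\#\,\mathrm{cycles}}$, with the cyclic invariance of each factor encoding Rule~2 and the orientation encoding Rule~1. Thus $\alpha_M=\det(\mathsf{I}-\mathsf{W}_G)$ and $\alpha_{C_p}=\det(\mathsf{I}-\mathsf{W}_{G\backslash p})$, whence $N_p=\det(\mathsf{I}-\mathsf{W}_{G\backslash p})/\det(\mathsf{I}-\mathsf{W}_G)$ and $W_p=N_p\,p$ is the claimed formula. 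The delicate point is entirely the involution of the third paragraph; the reduction to pyramids and the determinant identity are bookkeeping, though throughout one must verify that the peeling bijection and the involution respect the Cartier--Foata commutations rather than ordinary commutativity.
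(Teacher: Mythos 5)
Your proof is correct and follows essentially the same route as the paper's: both peel the top piece $p$ off the pyramid, characterise the remaining heaps as those with no maximal piece supported on $G\backslash p$, and evaluate their series as (all hikes) times (signed self-avoiding hikes of $G\backslash p$), identifying the latter with $\det(\mathsf{I}-\mathsf{W}_{G\backslash p})$ --- the paper simply invokes inclusion--exclusion where you make the cancellation explicit via a sign-reversing involution. One small notational caution: your involution actually establishes the product in the order $\alpha_M^{-1}\alpha_{M\backslash\mathcal{T}}$ (all heaps below, signed antichain stacked on top), which is also the order implicit in the paper's $M(d)=\det(\mathsf{I}-\mathsf{W})^{-1}d$, so the form $\alpha_{M\backslash\mathcal{T}}\,\alpha_M^{-1}$ displayed in your lemma statement should be transposed to match the argument in the non-commutative setting.
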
 
Viennot gave a beautiful bijective proof of this result in \cite{Viennot1986}. There are at least four more proofs, one of which is the spirit of sieves and is provided below. The extension of Viennot's lemma to infinite graphs is obtained in Section~\ref{ViennotInfinite}. 
\begin{proof}
Let $\mathcal{H}_{[p,.]\neq0}$ be the set of hikes none of whose connected components commute with $p$. Clearly, for all $h\in \mathcal{H}_{[p,.]\neq0}$, $hp$ is a walk multiple of $p$ and we need only determine $\mathcal{H}_{[p,.]\neq0}$ to obtain the lemma. But this set is the set of all hikes minus the set of hikes such that at least one connected component commutes with $p$. Such a component must be divisible on the right by prime(s) $p'$ commuting with $p$, hence $\mathcal{V}(p')\subseteq G\backslash p$. Let $\mathcal{P}_{G\backslash p}$ be the set of all such primes. The exclusion-inclusion principle then yields
$
\sum_{h\in\mathcal{H}_{[p,.]\neq0}}h =\sum_{d\in\mathcal{P}_{G\backslash p}^{s.a}}\mu(d)M(d),
$
with the convention that $1\in \mathcal{P}_{G\backslash p}^{s.a}$ and  $M(d)$ is the formal series of the left-multiples of $d$. For any hike this is $M(h)=\det(\mathsf{I}-\mathsf{W})^{-1}h$ since all hikes multiplied by $h$ on the right are left multiples of $h$. Then
$$
\sum_{h\in\mathcal{H}_{[p,.]\neq0}}h = \frac{\sum_{d\in\mathcal{P}_{G\backslash p}^{s.a}}\mu(d) d}{\det(\mathsf{I}-\mathsf{W})} = \frac{\det(\mathsf{I}-\mathsf{W}_{G\backslash p})}{\det(\mathsf{I}-\mathsf{W})}.
$$
The series $W_p$ is obtained upon multiplying the above by $p$ on the right. 
\end{proof}

The sieve based proof of Viennot's lemma suggests a wider family of results to count exactly or asymptotically families of hikes satisfying chosen properties on finite graphs. Concentrating on asymptotic expansions, we have:
%; that is no right prime divisor of $h$ is vertex-disjoint with $\gamma$ and commutes with it. This later observation implies that $\gamma$ is the only right prime divisor of $h\gamma$. Thus, $c(\gamma)$ is the proportion of all hikes $h$ such that $h\gamma$ is a walk.
\begin{namedtheorem}[Finite Sieve Theorem]
Let $G$ be a finite (weighted di)graph with adjacency matrix $\mathsf{A}$. Let $H$ be an induced subgraph of $G$ and let $\mathcal{P}_H$ be the set of primes on $H$. Let $\rho:\mathcal{H}\mapsto \mathbb{R}$ be a rank function on hikes 
such that $|\mathcal{H}_\varrho|=\lambda^{\varrho}f(\varrho)$ with $\lambda$ a real constant and $f(.)$ a bounded function such that $\lim_{\varrho\to\infty} f(\varrho)$ exists. 

Then the number (weight)\footnote{The notation $S(\mathcal{H}_\rho, \mathcal{P}_H)$ for this quantity is employed in keeping with conventions from sieve theory.} $S(\mathcal{H}_\varrho, \mathcal{P}_H)$ of \underline{hikes} of rank $\rho(h)=\varrho$ which are not multiples of primes on $H$ is asymptotically given by
$$
\frac{S(\mathcal{H}_\varrho, \mathcal{P}_H)}{|\mathcal{H}_\varrho|}\sim \sum_{d\in\mathcal{P}^{s.a}_H}\mu(d)\, \lambda^{-\rho(d)},~~\text{as}~~\varrho\to\infty.
$$
\end{namedtheorem}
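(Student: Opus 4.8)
The plan is to run a Legendre-type inclusion--exclusion sieve directly on the monoid of hikes, mirroring the structure already used in the proof of Viennot's lemma above. Fixing the rank $\varrho$, I would write the quantity to be estimated as
$$
S(\mathcal{H}_\varrho,\mathcal{P}_H)=\Big|\,\mathcal{H}_\varrho\setminus\!\!\bigcup_{p\in\mathcal{P}_H}\!\!A_p\,\Big|,\qquad A_p:=\{h\in\mathcal{H}_\varrho:\ p\,|_r\,h\},
$$
so that $A_p$ is the set of rank-$\varrho$ hikes that are (left-)multiples of the prime $p$, exactly the hikes appearing in the series identity $M(d)=\det(\mathsf{I}-\mathsf{W})^{-1}d$ recalled earlier. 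Since $H$ is an induced subgraph of the finite graph $G$, the set $\mathcal{P}_H$ is finite, so ordinary inclusion--exclusion applies and gives
$$
S(\mathcal{H}_\varrho,\mathcal{P}_H)=\sum_{S\subseteq\mathcal{P}_H}(-1)^{|S|}\Big|\bigcap_{p\in S}A_p\Big|.
$$

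The crux of the argument --- and the one genuinely structural step --- is to evaluate the intersections $\bigcap_{p\in S}A_p$ using the heaps-of-pieces description of hikes. Here I would use that a prime right-divides a heap if and only if it is one of its maximal (top) pieces, and that two pieces can be simultaneously maximal only when they commute, i.e. are vertex-disjoint (otherwise they are comparable in the heap order and one lies strictly above the other). Consequently $\bigcap_{p\in S}A_p$ is empty unless the primes of $S$ are pairwise vertex-disjoint, in which case their product $d_S:=\prod_{p\in S}p$ is a self-avoiding hike and $\bigcap_{p\in S}A_p=\{h\in\mathcal{H}_\varrho:\ d_S\,|_r\,h\}$. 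The assignment $S\mapsto d_S$ is then a bijection between self-avoiding subsets of $\mathcal{P}_H$ and $\mathcal{P}^{s.a}_H$, under which $(-1)^{|S|}=(-1)^{\Omega(d_S)}=\mu(d_S)$; this is precisely why the M\"obius function forces the sieve sum to collapse onto self-avoiding hikes. I expect verifying this maximal-piece/commutation dichotomy to be the main obstacle, as it is what replaces the classical fact that the l.c.m.\ of distinct primes is their product.

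The remainder is bookkeeping. Right-cancellativity of the heap monoid makes $x\mapsto x\,d$ a bijection from $\mathcal{H}$ onto the left-multiples of $d$, and total additivity of the rank gives $\rho(x d)=\rho(x)+\rho(d)$; hence the number of rank-$\varrho$ left-multiples of a self-avoiding $d$ equals $\big|\mathcal{H}_{\varrho-\rho(d)}\big|$. Substituting and reindexing the sieve sum by $d=d_S$ yields
$$
S(\mathcal{H}_\varrho,\mathcal{P}_H)=\sum_{d\in\mathcal{P}^{s.a}_H}\mu(d)\,\big|\mathcal{H}_{\varrho-\rho(d)}\big|.
$$
Finally I would insert the hypothesis $|\mathcal{H}_\varrho|=\lambda^{\varrho}f(\varrho)$ to obtain
$$
\frac{S(\mathcal{H}_\varrho,\mathcal{P}_H)}{|\mathcal{H}_\varrho|}=\sum_{d\in\mathcal{P}^{s.a}_H}\mu(d)\,\lambda^{-\rho(d)}\,\frac{f\big(\varrho-\rho(d)\big)}{f(\varrho)}.
$$
Because $\mathcal{P}_H$ is finite the sum has finitely many terms, and since $f$ admits a (necessarily nonzero) limit each ratio $f(\varrho-\rho(d))/f(\varrho)\to1$ as $\varrho\to\infty$; passing to the limit term by term gives the claimed asymptotic $\sum_{d\in\mathcal{P}^{s.a}_H}\mu(d)\lambda^{-\rho(d)}$. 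I would remark in passing that the finiteness of $\mathcal{P}^{s.a}_H$ ensures the limit exists and equals this sum exactly, the rate of convergence being governed by how fast $f$ settles to its limit.
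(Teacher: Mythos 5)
Your proof is correct and follows essentially the same route as the paper: a Legendre-type inclusion--exclusion collapsing $S(\mathcal{H}_\varrho,\mathcal{P}_H)$ onto $\sum_{d\in\mathcal{P}^{s.a.}_H}\mu(d)\,|\mathcal{H}_{\varrho-\rho(d)}|$ via the bijection $h\mapsto hd$, followed by the scaling hypothesis $|\mathcal{H}_\varrho|=\lambda^\varrho f(\varrho)$ and the finiteness of $\mathcal{P}^{s.a.}_H$ to pass to the limit term by term. The only differences are presentational --- you rederive the M\"obius-weighted sieve identity from the maximal-piece/commutation structure of heaps where the paper imports it as the known extension of the Eratosthenes--Legendre sieve and instead packages the computation through a multiplicative ``probability'' function with error terms $r(d)$ --- though note that your final step, like the paper's, tacitly uses $\lim_{\varrho\to\infty}f(\varrho)\neq 0$, which holds for the length rank function but is not forced by the theorem's bare hypotheses.
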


The Finite Sieve Theorem's most important application here will be with the length rank-function and sieving subgraph $H=G\backslash p$ for $p$ a prime. This provides the asymptotic expansion of Viennot's lemma:

\begin{namedtheorem}[Length corollary]
Let $G$ be a finite (weighted di)graph with adjacency matrix $\mathsf{A}$ and dominant eigenvalue $\lambda$, which we assume to be unique.\footnote{The theorem extends if $\lambda$ is not unique upon replacing $\lambda^{-1}$ by $\lambda^{-g}$ with $g$ its multiplicity.} Let $p$ be a simple cycle or a simple path on $G$ of length $\ell(p)$ and let $S(\mathcal{H}_l, \mathcal{P}_{G\backslash p})$ be defined as in the Finite Sieve Theorem. 

Then $S(\mathcal{H}_l, \mathcal{P}_{G\backslash p})$ is equal to the number (weight) of \underline{closed walks} of length $\ell(w)=l$ on $G$ whose unique right prime divisor is $p$ and is asymptotically given by
$$
\frac{S(\mathcal{H}_l, \mathcal{P}_{G\backslash p})}{|\mathcal{H}_l|}\sim \frac{1}{\lambda^{\ell(p)}}\det\left(\mathsf{I}-\frac{1}{\lambda}\mathsf{A}_{G\backslash p}\right)~~\text{as}~~\ell\to\infty.
$$
%where $\big(1/\det(\mathsf{I}-z\mathsf{A})\big)[\ell]$ stands for the coefficient of $z^\ell$ in the series $1/\det(\mathsf{I}-z\mathsf{A})$. 
Let $\mathrm{Err}(\mathcal{H}_l,\mathcal{P}_{G\backslash p})$ be the difference between the two terms above. Let $f(l):=[z^l]\zeta(z/\lambda)$ be the coefficient of  $z^l$ in the expansion of $\zeta(z/\lambda)$. Then $f$ is bounded, $\lim_{l\to\infty} f(l)$ exists, and
\begin{align*}
\mathrm{Err}(\mathcal{H}_l,\mathcal{P}_{G\backslash p})=\frac{1}{\lambda^{\ell(p)}}\sum_{k\geq 0}^\infty\left(\frac{\nabla^k[f]\big(l-\ell(p)\big)}{f(l)\,\lambda^k\,k!}-\delta_{k,0}\right)\,\det\!^{(k)}\!\Big(\mathsf{I}-\frac{1}{\lambda}\mathsf{A}_{G\backslash p}\Big).
\end{align*}
with $\delta_{k,0}$ the Kronecker delta and $\nabla$ is the backward difference operator\footnote{This operator acts on a function $F$ of a variable $x$ as $\nabla[F](x):=F(x)-F(x-1)$, and $\nabla^k[F](x)$ designates the $k$th iteration of operator $\nabla$, i.e. $\nabla^k[F](x)=\sum_{j=0}^k (-1)^j\binom{k}{j}F(x-j)$}. Here, $\det\!^{(k)}(\mathsf{I}-\frac{1}{\lambda}\mathsf{A}_{G\backslash p})$ stands for the $k$th derivative of $\det(\mathsf{I}-z\mathsf{A}_{G\backslash p})$ evaluated in $z=1/\lambda$.
\end{namedtheorem}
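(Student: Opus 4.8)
The plan is to derive all three assertions (the exact identity, the leading asymptotic, and the exact error term) from a single exact sieve identity for $S(\mathcal{H}_l,\mathcal{P}_{G\backslash p})$, and then to split that identity into a dominant term plus a remainder by coefficient asymptotics around the dominant eigenvalue. The two analytic ingredients needed are the simple-pole structure of $\zeta$ at $z=1/\lambda$ and a finite-difference reorganisation of the remainder.

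First I would establish the exact identity. The counting specialisation of Viennot's lemma ($\mathsf{W}\to\mathsf{A}$, $p\to z^{\ell(p)}$) gives the ordinary generating function of closed walks with unique right prime divisor $p$ as $W_p(z)=z^{\ell(p)}\det(\mathsf{I}-z\mathsf{A}_{G\backslash p})\,\zeta(z)$. On the other hand, the Finite Sieve Theorem's inclusion–exclusion writes $S$ as a $\mu$-weighted sum over self-avoiding hikes $d\in\mathcal{P}^{s.a}_{G\backslash p}$ of the left-multiples of $d$, i.e. as a coefficient of the same polynomial $\det(\mathsf{I}-z\mathsf{A}_{G\backslash p})=\sum_{d}\mu(d)z^{\ell(d)}$ against $\zeta(z)$. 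Matching coefficients of $z^{l}$ — carefully tracking the obligatory factor $z^{\ell(p)}$ contributed by $p$ itself — identifies $S(\mathcal{H}_l,\mathcal{P}_{G\backslash p})$ with $[z^{l}]W_p(z)$, the number (weight) of closed walks of length $l$ whose last erased loop is $p$. Next I would check the Finite Sieve Theorem hypotheses for the length rank: since $\lambda$ is the unique dominant eigenvalue of $\mathsf{A}$, the series $\zeta(z/\lambda)=1/\det(\mathsf{I}-\tfrac{z}{\lambda}\mathsf{A})$ has a single dominant singularity at $z=1$, a simple pole by Perron--Frobenius, so $f(l)=[z^{l}]\zeta(z/\lambda)$ is bounded and $\lim_{l\to\infty}f(l)=c>0$ exists (the residue). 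Writing $|\mathcal{H}_m|=\lambda^m f(m)$, the leading term is then $\lambda^{-\ell(p)}\sum_{d}\mu(d)\lambda^{-\ell(d)}=\lambda^{-\ell(p)}\det(\mathsf{I}-\tfrac1\lambda\mathsf{A}_{G\backslash p})$, which is the claimed asymptotic value.

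The last and hardest part is the exact error term. I would start from the normalised exact identity
\[
\frac{S(\mathcal{H}_l,\mathcal{P}_{G\backslash p})}{|\mathcal{H}_l|}=\frac{\lambda^{-\ell(p)}}{f(l)}\sum_{d\in\mathcal{P}^{s.a}_{G\backslash p}}\mu(d)\,\lambda^{-\ell(d)}\,f\big(l-\ell(p)-\ell(d)\big),
\]
and expand the shifted value about $x:=l-\ell(p)$ by Newton's backward-difference formula $f(x-\ell(d))=\sum_{k\ge0}\binom{\ell(d)}{k}(-1)^k\nabla^k[f](x)$. After interchanging the (finite) sums, the inner sum over $d$ is the falling-factorial moment $\sum_{d}\mu(d)\,(\ell(d))_k\,\lambda^{-\ell(d)}$, which I would recognise as $\lambda^{-k}\det^{(k)}(\mathsf{I}-\tfrac1\lambda\mathsf{A}_{G\backslash p})$ by differentiating $\det(\mathsf{I}-z\mathsf{A}_{G\backslash p})=\sum_{d}\mu(d)z^{\ell(d)}$ exactly $k$ times and evaluating at $z=1/\lambda$. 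Subtracting the $k=0$ contribution (the leading term already found) leaves precisely the stated series; the sum is genuinely finite because $\det(\mathsf{I}-z\mathsf{A}_{G\backslash p})$ is a polynomial, so its derivatives of order exceeding $|\mathcal{V}(G\backslash p)|$ vanish and no convergence question arises.

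I expect the main obstacle to be exactly this last reorganisation: propagating the $\ell(p)$ offset between $S$ and the walk count through the normalisation, and matching the finite-difference/falling-factorial bookkeeping — in particular reconciling the signs produced by $(1-\nabla)^{\ell(d)}$ in Newton's expansion with the signs in the successive derivatives of the determinant — against the compact form in the statement. The analytic inputs (a simple dominant pole and the existence of $\lim f$) are standard Perron--Frobenius transfer-matrix facts, and the coefficient-matching of Parts 1 and 2 is routine once Viennot's lemma is in hand; the combinatorial crux is the identity $\sum_{d}\mu(d)(\ell(d))_k\lambda^{-\ell(d)}=\lambda^{-k}\det^{(k)}(\mathsf{I}-\tfrac1\lambda\mathsf{A}_{G\backslash p})$, which is what converts the sieve remainder into derivatives of the characteristic polynomial of $G\backslash p$.
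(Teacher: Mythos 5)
Your proposal follows essentially the same route as the paper: the exact sieve identity $S(\mathcal{H}_l,\mathcal{P}_{G\backslash p})=\sum_{d}\mu(d)\,|\mathcal{H}_{l-\ell(p)-\ell(d)}|$, normalisation by $|\mathcal{H}_l|=\lambda^l f(l)$ with $f(l)=[z^l]\zeta(z/\lambda)$, a Newton backward-difference expansion of $f\big(l-\ell(p)-\ell(d)\big)$ about $l-\ell(p)$, and the identification of the falling-factorial moments $\sum_{d}\mu(d)\,(\ell(d))_{(k)}\lambda^{-\ell(d)}$ with $\lambda^{-k}\det^{(k)}\big(\mathsf{I}-\tfrac{1}{\lambda}\mathsf{A}_{G\backslash p}\big)$. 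The sign bookkeeping you flag as the main obstacle is genuine --- the paper's own expansion of $f(x-\ell(d))-f(x)$ drops the $(-1)^k$ that the backward-difference formula produces --- but apart from that shared wrinkle your argument coincides with the paper's.
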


\begin{proof}[Proof of the Finite Sieve Theorem]
The proof relies on an inclusion-exclusion principle in the poset of the hikes ordered by right-divisibility, see \cite{SIAM2017} for an overview of this poset. 
Let $\mathcal{P}\subsetneq\H$ be a set of primes and $\mathcal{P}^{\text{s.a.}}$ the set of all self-avoiding hikes constructible from $\mathcal{P}$. Let $S(\mathcal{H}_\varrho,\mathcal{P})$ be the number (weight) of hikes in $\mathcal{H}_\varrho$ which are not right-divisible by any prime of $\mathcal{P}$.
The inclusion-exclusion principle--here the extension to hikes of the sieve of Erathostenes-Legendre--yields 
$$
S(\mathcal{H}_{\varrho},\mathcal{P}) = \sum_{d\in \mathcal{P}^{\text{s.a.}}} \mu(d) |\mathcal{M}_d|,
$$
with $|\mathcal{M}_d|$ the number of multiples of $d$ in $\mathcal{H}_\rho$ and
 $\mu(d)$ is the M\"{o}bius function on hikes.\\[-.7em] 
%, which is \cite{Giscard2016}
%$$
%\mu(h)=\begin{cases}
%(-1)^{\Omega(h)},&\text{if $h$ is self-avoiding,}\\
%0,&\text{otherwise,}
%\end{cases}
%$$  
%where $\Omega(h)$ is the number of prime divisors of $h$, including multiplicity.\\[-.7em] 

%If $\mathcal{H}=\H$, all hikes multiplied by $d$ are multiples of $d$ and thus $|\mathcal{H}_d|=1/\det(\mathsf{I}-\mathsf{A})$. 
In order to progress, we seek a multiplicative function $\text{prob}(.)$ such that $|\mathcal{M}_d| = \text{prob}(d)|\mathcal{H}_\varrho| + r(d)$. In this expression, $\text{prob}(d)$ approximates the probability that a hike taken uniformly at random in $\mathcal{H}_\varrho$ is right-divisible by $d$. If edge-weights are present, the hikes are not all uniformly probable but follow a distribution dependent on these weights. No knowledge of this distribution is required here. Similarly, $m(d)=\text{prob}(d)|\mathcal{H}_{\varrho}|$ is the expected number of multiples of $d$ in $\mathcal{H}_\varrho$. Finally, $r(d)$ is the associated error term, arising from the fact that $|\mathcal{M}_d|$ is not truly multiplicative. 
Supposing that we can identify the $m(.)$ function, we would obtain 
 $$
S(\mathcal{H}_\varrho,\mathcal{P}) = \sum_{d\in \mathcal{P}^{\text{s.a.}}} \mu(d) m(d)  + \sum_{d\in \mathcal{P}^{\text{s.a.}}} \mu(d) r(d).
$$
Contrary to number theory, the first term does not admit any simpler form without further assumptions on $\mathcal{P}$. This is because of the possible lack of commutativity between some elements of $\mathcal{P}$. We note however that since $\mu(d)$ is non-zero if and only if $d$ is self-avoiding, and since we have required that $m(.)$ be multiplicative,\footnote{But not necessarily totally multiplicative.} then it follows that the first term is determined solely from the values of $m(.)$ over the primes of $\mathcal{P}$.\\

We therefore turn to determining $m(p)$ for $p$ prime. 
%To this end, we make an assumption on the nature of the set $\mathcal{H}$.
%% 
%%$$
%%|\mathcal{H}|\sum_{d\in \mathcal{P}_\mathcal{H}^{\text{s.a.}}} \mu(d) m(d) = |\mathcal{H}|\sum_{s\in\mathcal{S}_{\mathcal{P}_\mathcal{H}}}\prod_{p|s}\left(1-m(\ell)\right) =|\mathcal{H}| ~M(\mathcal{P}_\mathcal{H}).
%%$$
%%Now since $m(d)$ is totally multiplicative, we have, as in number theory: $\sum_d \mu(d) m(d) d = (\sum_d m(d) d)^{-1} = M(W)$ is a function of $W$. In particular 
%% $$
%%S(\mathcal{H},\mathcal{P}_\mathcal{H}) = M(A) |\mathcal{H}| + \sum_{d\in \mathcal{P}_\mathcal{H}^{\text{s.a.}}} \mu(d) r(d).
%%$$
%\begin{definition}
%Let $g:\H \to \mathbb{N}$ be a gradation on $\H$ and let $\ell\in \mathbb{N}$. The set $\mathcal{H}_{\ell}:=\{h\in \H,~g(h)= \ell\}$ is called a $\ell$-section of the set of hikes with respect to the gradation $g$.
%\end{definition}
%From now on, we will take the set $\mathcal{H}$ to be such a $\ell$-section.\\
The set of left-multiples of $p$ in $\H$ is $\mathcal{M}_p:=\{hp,~h\in \H\}$, hence in bijection with the set $\H$. Thus, the number of left-multiples of $p$ in $\mathcal{H}_{\varrho}$, is exactly $|\mathcal{H}_{\varrho-\rho(p)}|$. 
Then
$$
\text{prob}(p) + \frac{r(p)}{|\mathcal{H}_{\varrho}|} = \frac{|\mathcal{H}_{\varrho-\rho(p)}|}{|\mathcal{H}_{\varrho}|}.
$$
Seeking the best possible probability function $\text{prob}(\rho)$, let us suppose that we can choose this function  such that the error term of the above equation vanishes in the limit $\rho\to\infty$. If this is true, then we obtain 
$$
\text{prob}(\rho) = \lim_{\rho\to \infty}\frac{|\mathcal{H}_{\varrho-\rho(p)}|}{|\mathcal{H}_{\varrho}|}.
$$
In order to progress, we have to make an important assumption regarding the cardinality of the set $\mathcal{H}_{\varrho}$:

\begin{assumption}\label{assumptionAG}
There exists a scaling constant $\lambda$ and bounded function $f:\mathbb{R}\mapsto \mathbb{R}$ such that $\lim_{\varrho\to\infty}f(\varrho)$ exists and for $\varrho\in\mathbb{N}^*$
\begin{equation*}
|\mathcal{H}_{\varrho}|=\lambda^{\varrho} f(\varrho).
\end{equation*}
%
% Let $\mathcal{H}_{\ell}:=\{h\in\mathcal{H}:~\ell(h) =  \ell\}$ be set of all hikes on $G$ of length $\ell$. Then, there exists a bounded function $f:\mathbb{N}\mapsto \mathbb{R}$ such that $\lim_{\ell\to\infty}f(\ell)$ exists and for $\ell\in\mathbb{N}^*$ we have exactly
%\begin{equation*}
%|\mathcal{H}_{\ell}|=\lambda^{g\ell} f(\ell).
%\end{equation*}
\end{assumption}
\noindent In the case of the length rank function, this assumption is actually a proposition: 
\begin{proposition}\label{LengthScaling}
Let $G$ be a finite (weighted di)graph with dominant eigenvalue $\lambda$ of multiplicity $g$. Let $\mathcal{H}_{\ell}:=\{h\in\mathcal{H}:~\ell(h) =  \ell\}$ be set of all hikes on $G$ of length $\ell$. Then, there exists a bounded function $f:\mathbb{N}\mapsto \mathbb{R}$ such that $\lim_{\ell\to\infty}f(\ell)$ exists and for $\ell\in\mathbb{N}^*$ we have exactly
\begin{equation*}
|\mathcal{H}_{\ell}|=\lambda^{g\ell} f(\ell).
\end{equation*}
\end{proposition}
\begin{proof}
This follows directly from the ordinary zeta function on hikes $\zeta(z)=\det(\mathsf{I}-z\mathsf{A})^{-1}$, from which we have
\begin{equation*}
|\mathcal{H}_{\ell}|=[z^\ell]\!\left(\frac{1}{\det(\mathsf{I}-z\mathsf{A})}\right)
=\sum_{i_1,\cdots,\, i_N\vdash \ell} \lambda^{i_1}_{1}\lambda^{i_2}_{2}\cdots \lambda^{i_N}_{N}=\lambda^\ell\!\! \sum_{i_1,\cdots,\, i_N\vdash \ell} \lambda^{i_1-\ell}\lambda^{i_2}_{2}\cdots \lambda^{i_N}_{N}
\end{equation*}
where the sums run over all non-negative values of $i_j \geq 0$ such that $\sum_j i_j = \ell$ and $\lambda\equiv \lambda_1$ is the eigenvalue of the graph with the largest absolute value. We assume for the moment that $\lambda$ is unique and let 
$
f(\ell):= \sum_{i_1,\cdots, \,i_N\vdash \ell} \lambda^{i_1-\ell}\lambda^{i_2}_{2}\cdots \lambda^{i_N}_{N}=[z^\ell]\zeta(z/\lambda). 
$
This function is clearly bounded and 
$$
\lim_{\ell\to\infty} f(\ell) = \lim_{z\to1/\lambda^-}(1-z\lambda)\zeta(z),
$$ 
exists and is finite. If $|\lambda|$ is not unique and has multiplicity $g$, then the scaling constant for the number of hikes becomes $\lambda^g$ and then $f(\ell)=[z^\ell]\zeta(z/\lambda^g)$. \end{proof}

Proceeding with Assumption~\ref{assumptionAG}--or in the case of the length rank function Proposition~\ref{LengthScaling}--the existence of the limit for $f$ gives 
$$
\text{prob}(p) = \lim_{\varrho\to \infty}\frac{\lambda^{\varrho-\rho(p)} f\big(\varrho-\rho(p)\big)}{\lambda^\varrho f(\varrho)} = \lambda^{-\rho(p)}.
$$
The prob(.) function is multiplicative over the primes as desired and yields $m(p)  = |\mathcal{H}_{\varrho}| \lambda^{-\rho(p)}$. The associated error term is 
\begin{align*}
r(\varrho) = |\mathcal{H}_{\varrho-\rho(p)}| -  |\mathcal{H}_{\varrho}| \lambda^{-\rho(p)} &= \lambda^{\varrho-\rho(p)}\Big(f\big(\varrho-\rho(p)\big)-f(\varrho)\Big).
%&=\lambda^{\ell-\ell(\ell)}\ell(\ell)f'(\ell)+O\left(\lambda^{\ell-\ell(\ell)}g(\ell)^2f''(\ell)\right).
\end{align*}
%Since $f$ is smooth, $|f\big(\ell-g(\ell)\big)-f(\ell)|$ is bounded for all $g(\ell)$ and $\ell$
%To bypass the accumulation of errors expected of an Erathostenes-Legendre sieve, remark that since $f$ is smooth, we can expand the difference $f\big(\ell-g(\ell)\big)-f(\ell)$ as a Taylor series around $\ell$
%$$
%f\big(\ell-g(\ell)\big)-f(\ell) = \sum_{n=1}^\infty  \frac{f^{(n)}(\ell)}{n!}(-1)^ng(\ell)^n.
%$$
%
%Writing the difference as a Taylor series $f\big(\ell-g(\ell)\big)-f(\ell)=-f'(\ell)g(\ell)+(1/2)f''(\ell)g(\ell)^2+\cdots$, permits a reduction of the error term, by passing the various orders into the main term, as we will see that we are in fact able to calculate them exactly. For the moment, let's us keep everything in the error term, giving asymptotically $|r(\ell)| \sim |\lambda^{\ell-g(\ell)}g(\ell)f'(\ell)|,~\ell\to\infty$.
%In the situation where $\ell\gg g(\ell)$, $f$ being continuous, we have $r(\ell) = \lambda^{\ell-g(\ell)}g(\ell)f'(\ell)+O(f''(\ell))$.
%We will see that for self-avoiding hikes, the error term can be evaluated arbitrarily well. 
To establish the validity of these results, we need only verify that they are consistent with our initial supposition concerning the error term, namely that $r(p)/|\mathcal{H}_{\varrho}|$ vanishes in the limit $\varrho\to\infty$. The existence of the limit of $f$ implies $\lim_{\varrho\to\infty}|f\big(\varrho-\rho(p)\big)-f(\varrho)|=0$ and therefore that 
$$
\lim_{\varrho\to\infty }\frac{r(p)}{ |\mathcal{H}_{\varrho}|}=\lim_{\varrho\to\infty }\,\lambda^{-\rho(p)}\Big(f\big(\varrho-\rho(p)\big)-f(\varrho)\Big) =0,
$$
as required.\\

We are now ready to proceed with general self-avoiding hikes. Let $d=p_1\cdots p_{\Omega(d)}$ be self-avoiding. Since $m$ is multiplicative and the rank function is totally additive over $\mathcal{H}$, $m(d) = \prod_i m(p_i) = \lambda^{-\sum_i \rho(p_i)} = \lambda^{-\rho(d)}$. The associated error term follows as
$$
r(d) = |\mathcal{H}_{\varrho-\rho(d)}| -  |\mathcal{H}_{\varrho}| \lambda^{-\rho(d)} = \lambda^{\varrho-\rho(d)}\big(f\big(\varrho-\rho(d)\big)-f(\varrho)\big).
$$ 
Inserting these forms for $m(d)$ and $r(d)$ in the sieve yields 
 \begin{equation}\label{GeneralSieve}
S(\mathcal{H}_{\varrho},\mathcal{P}) = |\mathcal{H}_{\varrho}|\sum_{d\in \mathcal{P}^{\text{s.a.}}} \mu(d) \lambda^{-\rho(d)}  + \lambda^{\varrho}\sum_{d\in \mathcal{P}^{\text{s.a.}}} \mu(d) \lambda^{-\rho(d)} \big(f(\varrho-\rho(d))-f(\varrho)\big).
\end{equation}
%We conclude this section on the general form of the  Erathostenes-Legendre sieve on hikes by estimating the error term clearly. Using the bound $r(\ell)\leq |\mathcal{H}_{\ell}|\lambda^{-g(\ell)}$ we obtain
%%$
%%\sum_{d\in\mathcal{P}^{s.a.}} \mu(d) r(d) = |\mathcal{H}_{\ell}|\sum_{d\in\mathcal{P}^{s.a.}} \mu(d) \lambda^{-g(d)}g(d)f'(\ell)
%%$
%%From this it follows that 
% $$
%S(\mathcal{H}_{\ell},\mathcal{P}) = |\mathcal{H}_{\ell}|\sum_{d\in \mathcal{P}^{\text{s.a.}}} \mu(d) \lambda^{-g(d)}  + O\big(|\mathcal{P}^{\text{s.a.}}|\big).
%$$
We can now progress much further on making an additional assumption concerning the nature of the prime set $\mathcal{P}$. We could consider two possibilities: i) that  $\mathcal{P}$ is the set of all primes on an induced subgraph $H\prec G$; or ii) that $\mathcal{P}$ is a cut-off set, e.g. one disposes of all the primes of length $\ell(p)\leq \Theta$. 

In the situation where all primes commute with one-another--i.e. when Viennot's theory of heaps of pieces reduces to number theory \cite{SIAM2017}--then one may have both i) and ii) simultaneously. This is because in the semi-commutative extension of number theory that holds on the monoid of hikes, coprimality extends to being vertex-disjoint \cite{SIAM2017}. Therefore, requiring all primes to commute is equivalent to forcing the graph $G$ to be made of disjoint oriented simple cycles. Consequently, we can choose $H$ to be the induced subgraph of $G$ comprising all simple cycles of length up to some cut-off $\Theta$ and both situations i) and ii) are realised. For this reason, number theoretic sieves benefit from the advantages of both situations: ii) guarantees that sieves can be used to obtain estimates on the number of primes, while i) allows these estimates to be computable. On general graphs however, i) and ii) are not compatible and while ii) could be used to obtain direct estimates for the number of primes of any length, a problem of great interest, this actually makes the sieve NP-hard to implement. We therefore focus on the first situation.\\

Let $H\prec G$ be an induced subgraph of the graph $G$ and let that $\mathcal{P}\equiv \mathcal{P}_H$ be the set of all primes (here simple cycles) on $H$. To conclude the proof we need only show that the error term of Eq.~(\ref{GeneralSieve}) is asymptotically dominated by the first term $\sum_{d\in \mathcal{P}_H^{\text{s.a.}}} \mu(d) \lambda^{-\rho(d)}$. To this end, we note that since $H$ is finite\footnote{$G$ is finite and so are all its induced subgraphs.} 
%Remark that $\sum_{d\in\mathcal{P}_H^{s.a}}\mu(d) \lambda^{-\ell(d)}$ is therefore the sum over all the self-avoiding hikes on $H$, each with coefficient $\mu(d)\lambda^{-\ell(d)}$. It follows \cite{Giscard2016} that $\sum_{d\in\mathcal{P}_H^{s.a}}\mu(d) \lambda^{-\ell(d)}=\det(\mathsf{I}-\lambda^{-1}\mathsf{A}_H)$. 
$$
\lambda^{\varrho}\sum_{d\in \mathcal{P}_H^{\text{s.a.}}} \mu(d) \lambda^{-\rho(d)} \big(f(\varrho-\rho(d))-f(\varrho)\big),
$$
is a sum involving finitely many self-avoiding hikes $d$. In addition,
given that $\lim_{\varrho\to\infty}f(\varrho)$ exists (either by Assumption~\ref{assumptionAG} or by Proposition~\ref{LengthScaling} for the length rank function), $\lim_{\varrho\to\infty}f(\varrho-\rho(d))-f(\varrho) =0$ as long as $\rho(d)$ is finite, which is guaranteed by the finiteness of $H$. We have consequently established that the error term comprises finitely many terms, each of which vanishes in the $\varrho\to\infty$ limit. As a corollary, the first term is asymptotically dominant: 
 \begin{equation*}
\frac{S(\mathcal{H}_{\varrho},\mathcal{P}_H)}{|\mathcal{H}_{\varrho}|} \sim \sum_{d\in \mathcal{P}^{\text{s.a.}}_H} \mu(d) \lambda^{-\rho(d)}  ~~\text{as}~~\varrho\to\infty,
\end{equation*}
where we assume that $|\mathcal{H}_{\varrho}|\neq0$.

We now turn to establishing the length corollary of the Finite Sieve Theorem. We are specifically looking for the number of closed walks which are multiples of a prime $p$.  To this end, we  need only choose $H$ correctly. Let $h$ be a hike, for $w=hp$ to be a walk of length $l$, then $h$ must have length $l-\ell(p)$ and be such that none of its right-prime divisor commutes with $p$. The sieve must thus eliminate all hikes $h$ which are left-multiples of primes \emph{commuting} with $p$. Observe that all such primes are on $H=G\backslash p$. Consequently the Finite Sieve Theorem yields, for $|\mathcal{H}_{l-\ell(p)}|\neq 0$,
 \begin{align*}
S(\mathcal{H}_l, \mathcal{P}_{G\backslash p})&=|\mathcal{H}_{l-\ell(p)}| \sum_{d\in \mathcal{P}^{\text{s.a.}}} \mu(d) \lambda^{-\ell(d)}\\  
&\hspace{7mm}+ \lambda^{l-\ell(p)}\sum_{d\in \mathcal{P}^{\text{s.a.}}} \mu(d) \lambda^{-\ell(d)} \big(f(l-\ell(p)-\ell(d))-f(l-\ell(p))\big),\nonumber
\end{align*}
where $\lambda$ is now the graph dominant eigenvalue per Proposition~\ref{LengthScaling}. The asymptotically dominant term is a sum over all the self-avoiding hikes on $G\backslash p$, each with coefficient $\mu(d)\lambda^{-\ell(d)}$ and is equal to $\det(\mathsf{I}-\lambda^{-1}\mathsf{A}_{G\backslash p})$. Since furthermore $|\mathcal{H}_{l-\ell(p)}|=|\mathcal{H}_{l}|\lambda^{-\ell(p)}f\big(l-\ell(p)\big)/f(l)$, we have asymptotically for $l\gg1$
$$
\frac{S(\mathcal{H}_l, \mathcal{P}_{G\backslash p})}{|\mathcal{H}_{l}|}\sim\lambda^{-\ell(p)}\det\left(\mathsf{I}-\frac{1}{\lambda}\mathsf{A}_{G\backslash p}\right),
$$
while the error terms is 
\begin{align*}
\text{Err}(\mathcal{H}_l,\mathcal{P}_{G\backslash p})&:=\frac{S(\mathcal{H}_l, \mathcal{P}_{G\backslash p})}{|\mathcal{H}_{l}|}-\lambda^{-\ell(p)}\det\left(\mathsf{I}-\frac{1}{\lambda}\mathsf{A}_{G\backslash p}\right),\\
&=\lambda^{-\ell(p)}\big(f\big(l-\ell(p)\big)/f(l)-1\big)\det\left(\mathsf{I}-\frac{1}{\lambda}\mathsf{A}_{G\backslash p}\right)+\\
&\hspace{10mm}\frac{\lambda^{-\ell(p)}}{f(l)}\sum_{d\in \mathcal{P}^{\text{s.a.}}} \mu(d) \lambda^{-\ell(d)} \big(f(l-\ell(p)-\ell(d))-f(l-\ell(p))\big).
\end{align*}
The last line can be brought in determinantal form as well, since
$$
f(l-\ell(p)-\ell(d))-f(l-\ell(p)) = \sum_{k\geq 1}^{\ell(d)} \frac{\nabla^k[f]\big(l-\ell(p)\big)}{k!}\big(\ell(d)\big)_{(k)},
$$
with $(a)_{(k)}:=\prod_{i=0}^{k-1}(a-i)$ the falling factorial and $\nabla$ the backward difference operator. This operator acts on a function $F$ of a variable $x$ as $\nabla[F](x):=F(x)-F(x-1)$, and $\nabla^k[F](x)$ designates the $k$th iteration of operator $\nabla$, i.e. $\nabla^k[F](x)=\sum_{j=0}^k (-1)^j\binom{k}{j}F(x-j)$. Then
\begin{align*}
&\frac{\lambda^{-\ell(p)}}{f(l)}\sum_{d\in \mathcal{P}^{\text{s.a.}}} \mu(d) \lambda^{-\ell(d)} \big(f(l-\ell(p)-\ell(d))-f(l-\ell(p))\big).\\
&\hspace{10mm}=\lambda^{-\ell(p)}\sum_{d\in \mathcal{P}_H^{\text{s.a.}}} \mu(d) \lambda^{-\ell(d)} \sum_{k\geq 1}^{\ell(d)} \frac{\nabla^k[f]\big(l-\ell(p)\big)}{f(l)\,k!}\big(\ell(d)\big)_{(k)}.
\end{align*}
The upper limit of the inner sum over $k$ can be extended to $\infty$ since all terms with $k>\ell(d)$ are nul. Noting that $\big(\ell(d)\big)_{(k)}z^{\ell(d)}=z^k\frac{d^k}{dz^k}z^{\ell(d)}$, this allows us to invert the two sums, yielding
\begin{align*}
%\text{Err}(\mathcal{H}_\ell,\mathcal{P}_{G\backslash p})
%&=\lambda^{\ell-\ell(p)}\sum_{d\in\mathcal{P}_{G\backslash p}^{s.a}}\mu(d)\big(\ell(d)\big)_{(k)} \,z^{\ell(d)}|_{z=1/\lambda}\\
%&=\lambda^{-\ell(p)}\sum_{k\geq 1}^\infty\frac{\nabla^k[f]\big(\ell-\ell(p)\big)}{f(\ell)k!}\,z^k\frac{d^k}{dz^k}\sum_{d\in\mathcal{P}_{G\backslash p}^{s.a}}\mu(d) z^{\ell(d)}\Big|_{z=1/\lambda}\\
%%&=\lambda^{\ell}\sum_{k\geq 1}^\infty\frac{\nabla^k[f](\ell)}{k!}\,z^{k}\frac{d^k}{dz^k} \det\big(\mathsf{I}-z\mathsf{A}_H\big)\Big|_{z=1/\lambda}\\
\lambda^{-\ell(p)}\sum_{k\geq 1}^\infty\frac{\nabla^k[f]\big(l-\ell(p)\big)}{f(l)\lambda^k\,k!}\,\det\!^{(k)}\!\Big(\mathsf{I}-\frac{1}{\lambda}\mathsf{A}_{G\backslash p}\Big).
\end{align*}
Now setting $k=0$ in the above recovers $\lambda^{-\ell(p)}\big(f\big(l-\ell(p)\big)/f(l)-1\big)\det\left(\mathsf{I}-\frac{1}{\lambda}\mathsf{A}_{G\backslash p}\right)$ with the exception of the $-1$ in the parenthesis, which can be introduced as $-\delta_{k,0}$.
This establishes the Finite Sieve Theorem and its length corollary.
\end{proof}

%\begin{remark}
%The proof presented here extends to rank functions over $\mathcal{H}$ other than the length $\ell$ provided an equivalent of Lemma~\ref{assumptionAG} can be proven or conjectured for them. Of particular interest is the rank $\rho$ which to a hike $h$ associates the size $\rho(h)$ of its largest bouquet, a bouquet being an ensemble of simple cycles with a common vertex. Equivalently, $\rho(h)$ is the maximum number of times $h$ visits any vertex. Furthermore $\rho(h)$ is also equal to the minimum number of self-avoiding divisors of $h$, making it the extension of the minimum number of square-free divisors of an integer. 
%\end{remark}

\section{Infinite graphs}\label{InfiniteG}
\subsection{Viennot's lemma on infinite graphs}\label{ViennotInfinite}
As we have seen, when counting closed walk multiples of a prime according to their length on finite graphs\footnote{Recall that this is the same thing as counting closed walk whose last erased loop is a certain simple cycle.}, the Finite Sieve Theorem produces the asymptotics of Viennot's lemma. To put the infinite graphs results in context we thus start by proving that this lemma extends to infinite graphs with bounded degree. Here, we state only the ordinary generating function version of the extension here, that for formal series on hikes is provided in Appendix~\ref{ViennotFormal}.  

\begin{figure}
\begin{center}
\includegraphics[width=1\textwidth]{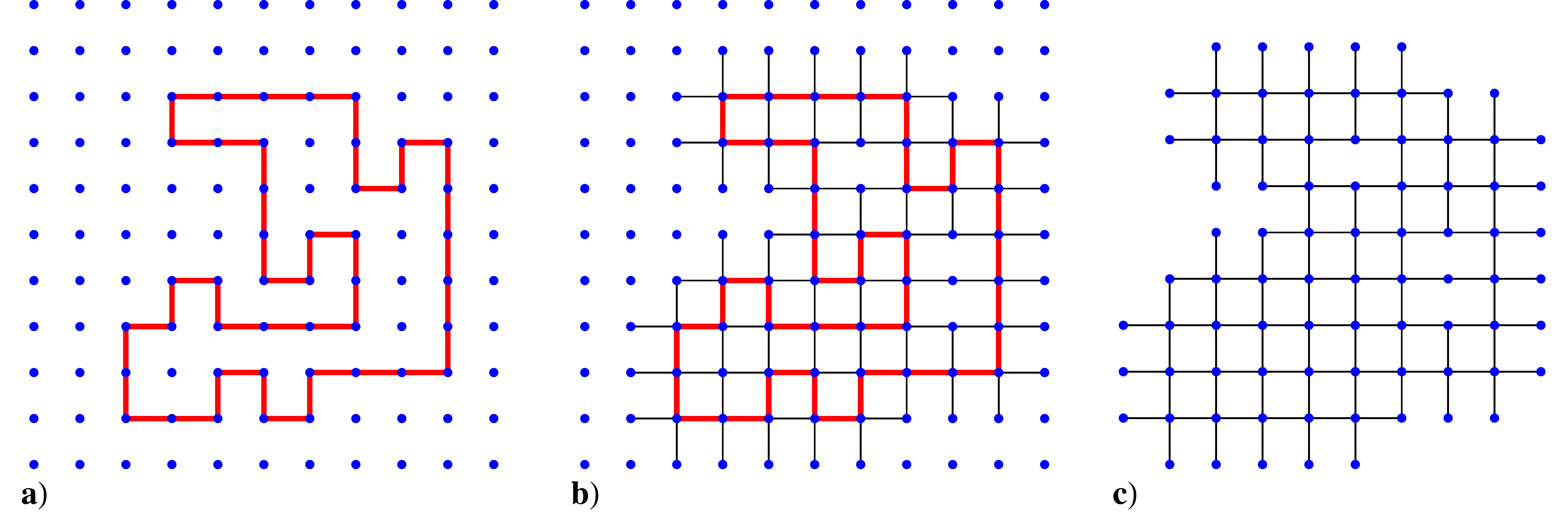}
  \end{center}
  \vspace{-4.5mm}
 \caption{\label{GraphGP}{\footnotesize \textbf{a)} A self-avoiding polygon $p$ on the square lattice; \textbf{b)} Polygon $p$ and all the edges that have at least one endpoint on $p$; \textbf{c)} Graph $G_p$.}}
  \vspace{-2mm}
\end{figure}

\begin{proposition}\label{InfiniteViennot}
Let $G$ be an infinite (weighted di)graph with bounded degree. Let $p$ be a prime on $G$, with support $\mathcal{V}(p)$ and neighborhood $\mathcal{N}(p)$. Let $G_p\prec G$ be the induced subgraph of $G$ with vertex set $\mathcal{V}(G_p)=\mathcal{V}(p)\cup\mathcal{N}(p)$ (see Fig.~\ref{GraphGP} for an example) and $\mathsf{B}_p$ its adjacency matrix. Let $\mathsf{R}(z)=\big(\mathsf{I}-z\mathsf{A}\big)^{-1}\big|_{G_p}$ be the restriction of the resolvent of $G$ to $G_p$.  
%$$\big(\mathsf{B}_p\big)_{ij}:=\begin{cases}1,&\text{if $i,j\in \mathcal{V}(p)\cup \mathcal{N}(p)$},\\0,&\text{otherwise}.\end{cases}$$ 
%
%
%Let $\{G_N\}_{N\in\mathbb{N}}$, be a family of graphs, such that $\lim_{N\to\infty} G_N=G$ and for any prime $p$ on $G$ with finite support, there exists $N(p)\in\mathbb{N}$ with $N\geq N(p)\Rightarrow \mathcal{V}(p)\cup \mathcal{N}(p)\subseteq \mathcal{V}(G_N)$. Let $\mathsf{A}_{G_N}$ be the adjacency matrix of $G_N$. 

Then the ordinary generating function $R_p(z)$ of closed walks whose last erased loop is $p$ is given by
\begin{equation}\label{ViennotLemmaInfinite}
%\lim_{N\to\infty}\frac{\det(\mathsf{I}-z\mathsf{A}_{G_N\backslash p})}{\det(\mathsf{I}-z\mathsf{A}_{G_N})}
R_p(z)=z^{\ell(p)}\det\!\big(\mathsf{I}+z\mathsf{R}(z)\mathsf{B}_p\big).
\end{equation}
\end{proposition}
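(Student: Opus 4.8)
The plan is to reduce the statement to the finite Viennot lemma already established, and to trade the ratio of two (ultimately infinite) determinants for a single \emph{finite} determinant supported on the window $G_p$ by means of a resolvent factorisation together with Sylvester's identity $\det(\mathsf{I}+\mathsf{X}\mathsf{Y})=\det(\mathsf{I}+\mathsf{Y}\mathsf{X})$. The conceptual point is that although $\det(\mathsf{I}-z\mathsf{A})$ has no meaning on an infinite graph, the \emph{relative} determinant produced when $p$ is detached from $G$ is governed by a perturbation $\mathsf{B}_p$ of finite support, and therefore collapses to an $|\mathcal{V}(G_p)|\times|\mathcal{V}(G_p)|$ determinant involving only the restricted resolvent $\mathsf{R}(z)$.

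First I would record the finite-graph identity. On any finite $G$, the ordinary generating function form of Viennot's lemma reads $R_p(z)=z^{\ell(p)}\det(\mathsf{I}-z\mathsf{A}_{G\backslash p})/\det(\mathsf{I}-z\mathsf{A})$. Writing $\mathsf{B}_p$ for the matrix recording the edges of $G$ incident to $p$, extended by zero to all of $\mathcal{V}$, detaching $p$ amounts to the bookkeeping identity $\det(\mathsf{I}-z\mathsf{A}_{G\backslash p})=\det(\mathsf{I}-z(\mathsf{A}-\mathsf{B}_p))$, since the vertices of $p$ then form an isolated block contributing a trivial factor. Factoring $\mathsf{I}-z(\mathsf{A}-\mathsf{B}_p)=(\mathsf{I}-z\mathsf{A})(\mathsf{I}+z(\mathsf{I}-z\mathsf{A})^{-1}\mathsf{B}_p)$ and taking determinants gives $\det(\mathsf{I}-z\mathsf{A}_{G\backslash p})/\det(\mathsf{I}-z\mathsf{A})=\det(\mathsf{I}+z(\mathsf{I}-z\mathsf{A})^{-1}\mathsf{B}_p)$. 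Because $\mathsf{B}_p$ factors through the inclusion $\iota$ of $\mathcal{V}(G_p)$, Sylvester's identity turns the full determinant into the finite one $\det(\mathsf{I}+z\,\mathsf{R}(z)\,\mathsf{B}_p)$ with $\mathsf{R}(z)=\iota^{\mathrm{T}}(\mathsf{I}-z\mathsf{A})^{-1}\iota=(\mathsf{I}-z\mathsf{A})^{-1}|_{G_p}$. Combined with the prefactor this already proves $R_p(z)=z^{\ell(p)}\det(\mathsf{I}+z\mathsf{R}(z)\mathsf{B}_p)$ on finite graphs.

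Second, I would pass to the infinite graph through a finite exhaustion. Choosing an increasing sequence of finite induced subgraphs $G_N\nearrow G$, each containing $G_p$ (for instance balls of growing radius around $p$), the previous step yields $R_p^{(N)}(z)=z^{\ell(p)}\det(\mathsf{I}+z\,\mathsf{R}^{(N)}(z)\,\mathsf{B}_p)$ with $\mathsf{R}^{(N)}(z)=(\mathsf{I}-z\mathsf{A}_{G_N})^{-1}|_{G_p}$, and both sides are then compared coefficient by coefficient in $z$. On the left, for each fixed $n$ every closed walk of length $n$ through $p$ is confined to a bounded neighbourhood of $p$, so $[z^n]R_p^{(N)}$ stabilises to $[z^n]R_p$ once $N$ is large, and bounded degree keeps these coefficients finite. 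On the right, $[z^k]\mathsf{R}^{(N)}(z)_{ij}=(\mathsf{A}_{G_N}^k)_{ij}\to(\mathsf{A}^k)_{ij}$ for each fixed $k$ and each pair $i,j\in\mathcal{V}(G_p)$, so the entries of $\mathsf{R}^{(N)}(z)$, and hence the finite determinant, converge as formal power series to those built from $\mathsf{R}(z)=(\mathsf{I}-z\mathsf{A})^{-1}|_{G_p}$. Passing to the limit gives the claimed identity; moreover for $|z|<1/\lambda$ the series converge numerically, since bounded degree gives spectral radius at most $\lambda$, so $\mathsf{R}(z)$ is a genuine bounded operator and $\mathsf{R}(z)|_{G_p}$ a genuine finite matrix.

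The main obstacle I anticipate is precisely this last interchange of limit and counting, namely making rigorous that the finite-graph identities survive the exhaustion. The substance lies in the two stabilisation statements above, both resting on bounded degree: finiteness of the number of walks of each length keeps every power-series coefficient finite, while locality --- a length-$n$ walk through the fixed window $G_p$ cannot leave a radius-$n$ ball --- makes each coefficient eventually constant in $N$. One must also check that the bookkeeping identity $\det(\mathsf{I}-z\mathsf{A}_{G\backslash p})=\det(\mathsf{I}-z(\mathsf{A}-\mathsf{B}_p))$ involves exactly the edges incident to $p$, so that edges internal to $\mathcal{N}(p)$, which survive the deletion of $p$, are left untouched; this is what pins down the correct perturbation $\mathsf{B}_p$ and hence the finite window on which Sylvester's reduction operates.
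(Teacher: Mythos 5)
Your proof is correct and follows essentially the same route as the paper: apply the finite Viennot lemma on a sequence of finite graphs containing $G_p$, rewrite the ratio of determinants as $\det\big(\mathsf{I}+z\mathsf{R}_N(z)\mathsf{B}_{N,p}\big)$, collapse it to the finite window $G_p$ (the paper does this by noting the matrix is the identity off that block, you via Sylvester's identity), and pass to the limit coefficient-by-coefficient using bounded degree. Your closing caveat is well taken and matches the paper's proof, where the perturbation is defined as $\mathsf{B}_{N,p}:=\mathsf{A}_{G^{\text{Tor}}_N}-\mathsf{A}_{G^{\text{Tor}}_N\backslash p}$, i.e.\ exactly the edges incident to $p$.
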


Observe that neither the series $\det(\mathsf{I}-z\mathsf{A}_{G\backslash p})$ nor $\det(\mathsf{I}-z\mathsf{A})$ appearing in Viennot's original lemma are well defined on infinite graphs---e.g. all their finite order coefficients can be infinite. Their ratio evaluated on a sequence of finite graphs converging to $G$ (as defined in Appendix~\ref{GraphSeq}) nonetheless gives rise to a well defined series in the sense of the Proposition above. %Consequently, for the sake of simplicity we write $\det(\mathsf{I}-z\mathsf{A}_{G\backslash p})/\det(\mathsf{I}-z\mathsf{A})$ to express $f_p(z)$.

\begin{proof}
Let $\{G^{\text{Tor}}_N\}_{N\in\mathbb{N}}$ be the small torus sequence of graphs converging to $G$ as defined in Appendix~\ref{GraphSeq}. For any SAP $p$, define $N(p)\in\mathbb{N}$ such for all $N\geq N(p)$, then $G_p$ is an induced subgraph of $G_N$.\footnote{Existence of $N(p)$ is guaranteed for finite length SAPs as the small torus $G^{\text{Tor}}_{N:=n^2}$ contains the ball of radius $n/2$ centred on the starting point of $p$ on $G$.} 
The result follows by using Viennot's lemma on $G^{\text{Tor}}_N$ and grouping all terms into a single determinant. 
Let $p$ be a prime of finite support, hence finite length $\ell(p)$ on $G$. 
Let $N\geq N(p)$, $\mathsf{R}_N(z):=(\mathsf{I}-z\mathsf{A}_{G^{\text{Tor}}_N})^{-1}$ and $\mathsf{B}_N(p):=\mathsf{A}_{G^{\text{Tor}}_N}-\mathsf{A}_{G^{\text{Tor}}_N\backslash p}$. Then Viennot's lemma on $G^{\text{Tor}}_N$ yields the ordinary generating function $R_{N,p}(z)$ of closed walks whose last erased loop is $p$ on $G^{\text{Tor}}_N$ as
\begin{align*}
R_{N,p}(z)=\frac{\det(\mathsf{I}-z\mathsf{A}_{G^{\text{Tor}}_N\backslash p})}{\det(\mathsf{I}-z\mathsf{A}_{G^{\text{Tor}}_N})} &=\det\!\big(\mathsf{R}_N(z)\big)\det\big(\mathsf{I}-z\mathsf{A}_{G^{\text{Tor}}_N}+z\mathsf{B}_{N,p}\big),\\
&=\det\!\big(\mathsf{I}+z\mathsf{R}_N(z)\mathsf{B}_{N,p}\big).
\end{align*}
Given that the sequence of small tori $G^{\text{Tor}}_N$ converges to $G$, $\lim_{N\to\infty} R_{N,p}(z)=R_p(z)$ provided we can show that $\det\!\big(\mathsf{I}+z\mathsf{R}_N(z)\mathsf{B}_{N,p}\big)$ is well defined under this limit. 

Now since $\big(\mathsf{B}_{N,p}\big)_{ij}=0$ unless both $i,j\in\mathcal{V}(p)\cup \mathcal{N}(p)$ and since $p$ is of finite length, then $\det\big(\mathsf{I}+z\mathsf{R}_N(z)\mathsf{B}_{N,p}\big)$ is equal to the determinant of the finite $|\mathcal{V}(p)\cup \mathcal{N}(p)|\times |\mathcal{V}(p)\cup \mathcal{N}(p)|$ matrix $\mathsf{Q}_{ij}:=\big(\mathsf{I}+z\mathsf{R}_N(z)\mathsf{B}_{N,p}\big)_{ij}$, for $i,j$ in $p$ and its neighborhood. Given that $\mathsf{B}_{N,p}\big|_{G_p}=\mathsf{B}_p$ for all $N\geq N(p)$ and since the graph has bounded degree $\lim_{N\to\infty}\mathsf{R}_N(z) = \mathsf{R}(z)$ is well defined. We consequently have
$$
R_p(z) = \lim_{N\to\infty}R_{N,p}(z)=\lim_{N\to\infty} \det\!\Big(\big(\mathsf{I}+z\mathsf{R}_N(z)\mathsf{B}_{N,p}\big)|_{G_p}\Big)=\det\!\big(\mathsf{I}+z\mathsf{R}(z)\mathsf{B}_p\big).
$$
We emphasise that the determinant $\det\!\big(\mathsf{I}+z\mathsf{R}(z)\mathsf{B}_p\big)$ is equal to that of the finite matrix $\mathsf{Q}$ and no considerations pertaining to the determinants of infinite matrices is needed.  
\end{proof}

We illustrate Viennot's lemma on infinite graphs with the ordinary generating function $R_e(z)$ of closed walks whose last erased loop is an edge cycle $e$ on the square lattice. Direct application of Eq.~(\ref{ViennotLemmaInfinite}) gives
\begin{align}\label{Redge}
R_e(z) &= \frac{\pi}{4} - \frac{1}{16}+\frac{1}{16 \pi^2}\big(64 z^2-4\big) K\!\big(16 z^2\big)^2+\frac{1}{4 \pi} \Big(K\!\big(16 z^2\big)-\pi ^2\Big),\\
   &=z^2+7 \,z^4+70 \,z^6+807 \,z^8+10,046 \,z^{10}+131,206 \,z^{12}+\cdots\nonumber
\end{align}
\begin{figure}
\begin{center}
\includegraphics[width=1\textwidth]{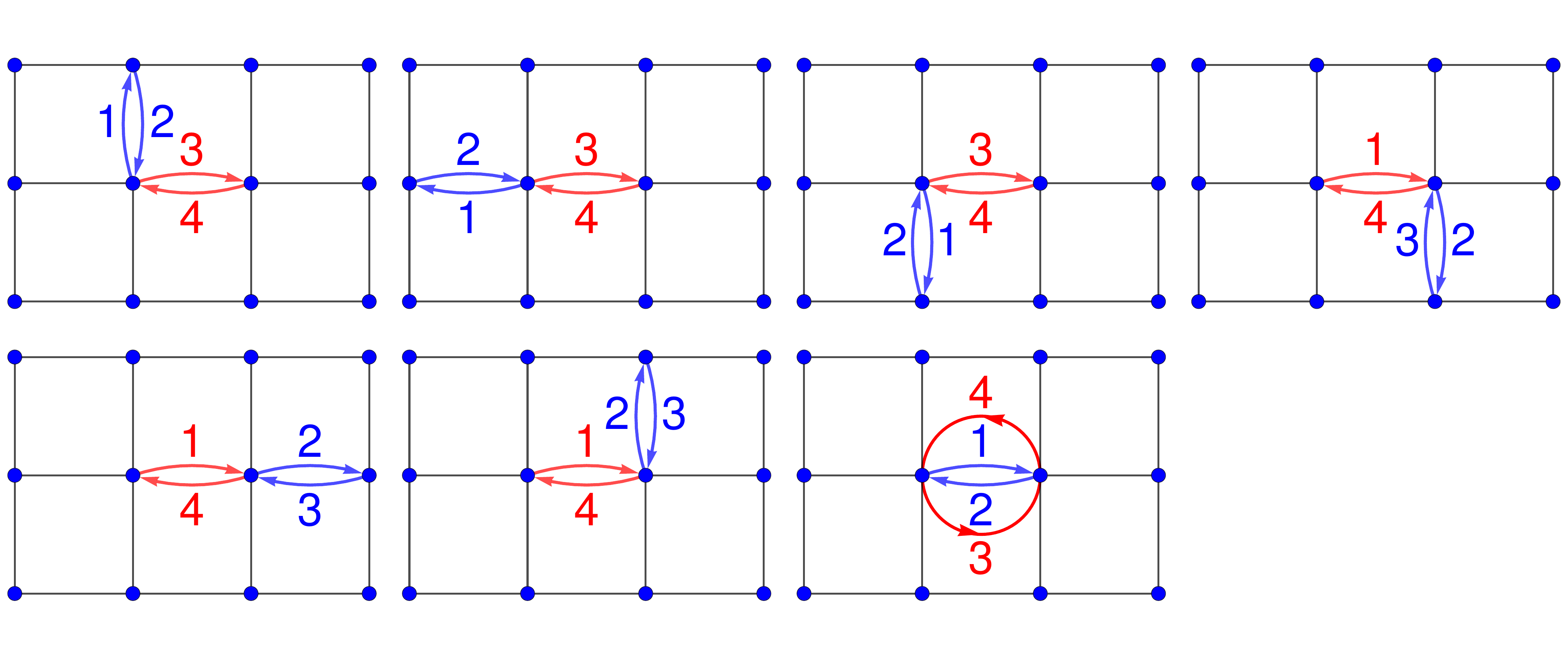}
  \end{center}
  \vspace{-9.5mm}
 \caption{\label{EdgeMultiples}{\footnotesize Illustration of the 7 walks of length 4 on the square lattice whose last erased loop is an edge cycle (highlighted in red). The numbers next to the highlighted edges indicate the order in which these edges are traversed.}}
  \vspace{-2mm}
\end{figure}
where $K(x):=\int_0^{\pi/2}\big(1-x\sin^2(\theta)\big)^{\!-1/2}d\theta$ is the complete elliptic integral of the first kind. In Fig.~(\ref{EdgeMultiples}) we illustrate the 7 walks of length 4 on the square lattice whose last erased loop is an edge cycle, as correctly counted by  $[z^4]R_e(z)=7$. 
Typically, generating functions for walks whose last erased loop is a certain self-avoiding polygon $p$, such as $R_e(z)$ above, are not meromorphic as functions of $z$. Consequently, the asymptotic growth of their coefficients cannot be determined with the traditional tools of meromorphic asymptotics \cite{flajolet2009analytic}.  
In the next section we develop a generic method as a replacement that allows us to determine this asymptotic growth for any prime $p$. 

\subsection{Asymptotic expansion of Viennot's lemma}
We now aim at establishing the formula for the fraction of walks whose last erased loop is any chosen self-avoiding polygon on any infinite vertex-transitive graph.

%\begin{remark}
%The fraction $F_p\,\lambda^{-\ell(p)}$ is the dominant term of an asymptotic expansion in walk lengths $\ell\to\infty$. The sieve error terms, corresponding to the subdominant contributions, are given explicitly in Appendix~\ref{ErrorTerms}. Combinatorially, they play a central role in the gap separating the exponents stemming from SLE$_2$ and those resulting from SLE$_{8/3}$ conjectured for the SAP model, see \S\ref{UnivDiscussion}. 
%\end{remark}

\begin{proof}[Proof of Corollary~\ref{InfiniteSieve}]
The first difficulty in extending the Finite Sieve Theorem to infinite graphs comes from the proliferation of hikes on such graphs: there are either exactly 0 or infinitely many hikes of any given length. Furthermore, the number of hikes increases uncontrollably with the length as there are also infinitely many more hikes of any length $L'>L$ than of length $L$.
These observations continue to be true even when hikes are considered up to translation. 
To make matters worse, the fraction of all hikes which are walks is exactly 0; and even with edge weights uniformly set to $1/\lambda$, the total weight carried by all walks whose last erased loop is any SAP $p$ is still divergent.
 
To resolve these serious difficulties requires us to separate the finite sieve results into two contributions, the first of which relates hikes to walks and the second relates closed walks to walks with fixed last erased loop $p$. This second contribution must itself be dealt with carefully to cure  divergences stemming from the non-meromorphic nature of the generating functions produced by the extension of Viennot's Lemma to infinite graphs. We illustrate every step of the proof with explicit results on the square lattice.\\ 

Let us consider the dominant term of the asymptotic expansion of Viennot's lemma on the sequence of finite graphs $G^{\text{Tor}}_N$. Fixing $N$, consider $p$ a self-avoiding polygon on $G^{\text{Tor}}_N$.
Using the transformation presented in the extension of Viennot's lemma to infinite graphs, we can express the asymptotic fraction of hikes of length $l$ that are walks whose last erased loop is $p$ as $l\to\infty$ as
\begin{align*}
\frac{S(\mathcal{H}_l, \mathcal{P}_{G^{\text{Tor}}_N\backslash p})}{|\mathcal{H}_{l}|}&\sim\lambda^{-\ell(p)}\det\left(\mathsf{I}-\frac{1}{\lambda}\mathsf{A}_{G^{\text{Tor}}_N\backslash p}\right)\\
&\hspace{5mm}=\lambda^{-\ell(p)}\lim_{z\to1/\lambda^{-}}\zeta_N(z)^{-1}\det\left(\mathsf{I}+z\mathsf{R}_N(z)\mathsf{B}_p\right).
\end{align*}
This suggests a strategy consisting of proving separate convergence in $z\to1/\lambda^{-}$ of the two terms in the limit above. This naive strategy ultimately fails, but the procedure that works is best understood once the nature of this failure is made apparent and several results we will obtain along the way are necessary to implement the correct proof strategy. 
In this spirit, we pretend to follow the naive approach and thus first examine the behaviour of the limit $\lim_{z\to\lambda^{-1}}\zeta_N(z)^{-1}$ asymptotically in $N$: 
\begin{lemma}\label{RootedHikesLemma}
Let $\{G^{\text{Tor}}_N\}_{N\in\mathbb{N}}$ be the small tori sequence of vertex-transitive graphs converging to the infinite bounded-degree vertex-transitive graph $G$ with maximum eigenvalue $\lambda$. Let $\zeta_N(z)$ be the zeta function of hikes on $G^{\text{Tor}}_N$ and let $R(z)=\mathsf{R}(z)_{ii}$ be the ordinary generating function of closed walks on $G$. Then,
\begin{equation}\label{ZetaN}
\lim_{N\to\infty} \zeta_N(z)^{1/N} = \tilde{\zeta}(z)=\exp\left(\int \frac{1}{z}\big(R(z)-1\big) dz\right).
\end{equation}
Furthermore $\alpha:=\lim_{z\to1/\lambda^-}\tilde{\zeta}^{-1}(z)$ is well defined. All the coefficients $[z^n]\tilde{\zeta}(z)$ are positive integers, while the coefficients $[z^n]\tilde{\zeta}^{-1}(z)$ are integers.
\end{lemma}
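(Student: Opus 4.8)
The plan is to prove the three assertions of Lemma~\ref{RootedHikesLemma} in turn, beginning with the limiting formula~(\ref{ZetaN}) for $\tilde\zeta(z)$. The natural starting point is the zeta function on $G^{\text{Tor}}_N$, namely $\zeta_N(z)=\det(\mathsf{I}-z\mathsf{A}_{G^{\text{Tor}}_N})^{-1}$. Taking logarithms converts the determinant into a trace, using the standard identity $-\log\det(\mathsf{I}-z\mathsf{A})=\mathrm{Tr}\,\log(\mathsf{I}-z\mathsf{A})^{-1}=\sum_{k\geq1}\frac{z^k}{k}\mathrm{Tr}(\mathsf{A}^k)$. Since $\mathrm{Tr}(\mathsf{A}_{G^{\text{Tor}}_N}^k)$ counts closed walks of length $k$ summed over all $N$ starting vertices, vertex-transitivity gives $\mathrm{Tr}(\mathsf{A}_{G^{\text{Tor}}_N}^k)=N\,c_N(k)$, where $c_N(k)$ is the number of closed walks of length $k$ from a fixed vertex. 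I would therefore write $\frac1N\log\zeta_N(z)=\sum_{k\geq1}\frac{z^k}{k}c_N(k)$ and observe that $\sum_{k\geq0}c_N(k)z^k=R_N(z)$ is the closed-walk generating function on $G^{\text{Tor}}_N$. Recognizing that $\sum_{k\geq1}\frac{z^k}{k}c_N(k)=\int\frac1z\big(R_N(z)-1\big)\,dz$ (the $k=0$ term $c_N(0)=1$ being subtracted off), this yields $\zeta_N(z)^{1/N}=\exp\!\big(\int\frac1z(R_N(z)-1)\,dz\big)$ exactly, for each finite $N$.

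The second step is to pass to the limit $N\to\infty$. Here the key input is that the small tori sequence converges to $G$ in the sense of Appendix~\ref{GraphSeq}, which forces convergence of local walk counts: for each fixed $k$, $c_N(k)\to c(k)=[z^k]R(z)$ as $N\to\infty$, since every closed walk of bounded length $k$ from the fixed vertex eventually lives inside a ball contained in $G^{\text{Tor}}_N$ and is counted identically on $G$ and on $G^{\text{Tor}}_N$. Thus $R_N(z)\to R(z)$ coefficientwise, and hence term-by-term in the power series $\sum_k\frac{z^k}{k}c_N(k)$, giving $\lim_{N\to\infty}\zeta_N(z)^{1/N}=\exp\!\big(\int\frac1z(R(z)-1)\,dz\big)=\tilde\zeta(z)$ as a formal power series, which is the content of~(\ref{ZetaN}). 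I would note that the integration constant is fixed by requiring $\tilde\zeta(0)=1$, consistent with $[z^0]\zeta_N=1$.

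For the claim that $\alpha:=\lim_{z\to1/\lambda^-}\tilde\zeta^{-1}(z)$ is well defined and lies in $]0,1[$, I would study $\log\tilde\zeta(z)=\int\frac1z(R(z)-1)\,dz$ as $z\uparrow1/\lambda$. Since $R(z)$ is the closed-walk generating function of the infinite lattice, its radius of convergence is $1/\lambda$ (the reciprocal of the spectral radius), and the boundedness of $R(z)$ up to and at $z=1/\lambda$ on a vertex-transitive lattice of bounded degree is what makes the integral converge; this is precisely where the density of states being integrable near the band edge enters. The integrand $\frac1z(R(z)-1)$ has a nonnegative power series, so $\log\tilde\zeta$ increases to a finite positive limit, giving $\tilde\zeta(1/\lambda^-)\in]1,\infty[$ and hence $\alpha\in]0,1[$; the positivity is immediate and the strict upper bound $\alpha<1$ follows since $R(z)-1$ has strictly positive coefficients starting at $z^{\ell_0}$ for the shortest closed walk. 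The arithmetic claims come last: $[z^n]\tilde\zeta(z)$ being a positive integer follows because $\tilde\zeta$ is the \emph{density} generating function for hikes, i.e. the exponential-type cycle-index series whose coefficients count hikes per vertex, and these counts are integers by construction of the monoid of hikes; dually $\tilde\zeta^{-1}(z)=\exp\!\big(-\int\frac1z(R(z)-1)\,dz\big)$ has integer coefficients as the multiplicative (Möbius-type) inverse, matching the pattern $\zeta(z)^{-1}=\det(\mathsf{I}-z\mathsf{A})$ on finite graphs.

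I expect the main obstacle to be the justification of interchanging the $N\to\infty$ limit with the $z\to1/\lambda^-$ limit, i.e. establishing that the formal/coefficientwise convergence upgrades to convergence of the analytic functions up to the boundary point $z=1/\lambda$, so that $\alpha$ genuinely equals $\lim_{z\to1/\lambda^-}\tilde\zeta^{-1}(z)$ rather than merely a formal series evaluation. This requires controlling the tail $\sum_{k>K}\frac{z^k}{k}\big(c_N(k)-c(k)\big)$ uniformly in $N$ near $z=1/\lambda$, which hinges on the spectral gap structure of the tori $G^{\text{Tor}}_N$ and on uniform subexponential bounds for the discrepancy between $c_N(k)$ and $c(k)$; the bounded-degree and vertex-transitivity hypotheses, together with the explicit small-tori construction of Appendix~\ref{GraphSeq}, are exactly what supply these bounds. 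The remaining steps are comparatively routine formal-power-series and integral-convergence manipulations.
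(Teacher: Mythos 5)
Your derivation of the limit formula~(\ref{ZetaN}) is essentially the paper's: the paper phrases it through the log-derivative identity (the log-derivative of $\zeta_N$ is the trace of the resolvent, which on a vertex-transitive graph is $N$ times a single diagonal entry of $\mathsf{R}_N(z)$), while you expand $-\log\det(\mathsf{I}-z\mathsf{A})$ as $\sum_k \frac{z^k}{k}\mathrm{Tr}(\mathsf{A}^k)$ and pass to the local limit $c_N(k)\to c(k)$; these are the same computation. Two of the subsidiary claims, however, are not correctly handled. First, your justification that $\alpha$ is well defined rests on ``the boundedness of $R(z)$ up to and at $z=1/\lambda$,'' which is false on the lattices the paper cares about most: on the square lattice $R(z)=\frac{2}{\pi}K(16z^2)$ diverges as $z\to 1/4^-$, and more generally $R(1/\lambda)=\infty$ on any transitive lattice of dimension at most $2$. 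The integral $\int\frac{1}{z}\big(R(z)-1\big)\,dz$ nevertheless converges at $1/\lambda$, but for the weaker reason that its coefficients are $c(k)\lambda^{-k}/k$ and the return weight $c(k)\lambda^{-k}$ decays polynomially (like $k^{-d/2}$), so the extra factor $1/k$ makes the series summable at the boundary; your parenthetical about the density of states points the right way, but the step as written is wrong rather than merely terse.

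Second, and more substantially, the integrality of $[z^n]\tilde\zeta(z)$ does not follow from ``$\tilde\zeta$ counts hikes per vertex'': $\zeta_N(z)^{1/N}$ is not $\frac{1}{N}\zeta_N(z)$, and the exponent $\sum_k\frac{c(k)}{k}z^k$ has non-integer coefficients (on the square lattice $c(6)/6=400/6$), so the assertion that its exponential has positive integer coefficients is a genuine combinatorial fact requiring an argument. The paper supplies one by introducing \emph{rooted hikes} --- unordered commuting collections of closed walks, each translated so that the origin lies on its unique right prime divisor --- and identifying $[z^n]\tilde\zeta(z)$ as the number of rooted hikes of length $n$; this exponential-formula step is what your proposal is missing. (Once integrality of $\tilde\zeta$ with constant term $1$ is granted, integrality of $\tilde\zeta^{-1}$ is automatic, so that part of your argument is fine.) Your closing concern about interchanging $N\to\infty$ with $z\to 1/\lambda^-$ is a legitimate issue for the later use of $\alpha_N\sim\alpha^N$, but it is not needed for the lemma as stated, and the paper's own proof does not address it either.
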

%~\\[-1em]

%We give three proofs of these results. 
\begin{proof}
Since the log-derivative of the hike zeta function is the trace of the resolvent, the log derivative of $\zeta(z)^{1/N}$ is, on vertex-transitive graphs, a single diagonal entry of the resolvent, yielding Eq.~(\ref{ZetaN}). Assuming that the dominant eigenvalue is unique,  $R(z)$ diverges at worse as $1/(1-z\lambda)$ around $z\sim 1/\lambda$ so that necessarily
$\int \frac{1}{z}\big(R(z)-1\big) dz$ converges in $1/\lambda$.

To understand the coefficients of $[z^n]\tilde{\zeta}(z)$, let $h$ be a hike and let $w_1,\cdots w_n$ be vertex-disjoint walks making up hike $h$, i.e. $h=w_1w_2\cdots w_n$ modulo the fact that all these walks commute with one another. A \emph{rooted hike} $h_{\text{root}}$ is the object obtained from $h$ on translating all walks $w_i$ so that the origin lies on their unique right prime divisor but retaining the fact that they commute. Then $[z^n]\tilde{\zeta}(z)$ counts the number of rooted hikes of length $n$, $\tilde{\zeta}(z)$ is the zeta function of rooted hikes, while $\tilde{\mu}(z):=\tilde{\zeta}^{-1}(z)$ is the M\"obius function on rooted hikes.  
\end{proof}

%\begin{proof}[Proof 2]
%Let $\lambda_{N,i}$ be the eigenvalues of $G_N$. The set $\{\lambda_{N,i}\}_{i\in[1,N]}$ is bounded since $G_N$ is an unweighted graph of bounded degree. Then there exists a  compact domain of $\mathbb{R}^d$ and function $\lambda:\Omega \mapsto \mathbb{R}$ such that for all $N$ and $i\in[1,N]$, there exists $x_i\in\Omega$ with $\lambda(x_i) = \lambda_{N,i}$. In particular
%$$
%\lim_{N\to\infty}\zeta(z)^{1/N}=\prod_{i=1}^N(1-z\lambda_{N,i})^{-1/N}=\exp\left(\int_\Omega \log\big(1-z\,\lambda(\vec{v})\big)d\vec{v} \right).
%$$
%Then 
%\end{proof}
%

\begin{corollary}
On the square lattice, the generating function of rooted hikes is
\begin{align*}
\tilde{\zeta}(z) &= \exp \left(\sum_{n\geq 1}\binom{2n}{n}^2 \frac{z^{2n}}{2n}\right),\\
&=1+2 z^2+11 z^4+86 z^6+805 z^8+8402 z^{10}+94306 z^{12}+\hdots,
\end{align*}
This is illustrated on Fig.~(\ref{RootedHikes}). The corresponding M\"obius function is
\begin{align*}
\tilde{\mu}(z) &:=1/\tilde{\zeta}(z),\\
&=1-2 z^2-7 z^4-50 z^6-456 z^8-4728 z^{10}-53095 z^{12}+\hdots,
\end{align*}
and $\alpha:=\tilde{\zeta}^{-1}(1/\lambda) = \frac{1}{4} e^{\frac{4 C}{\pi }}\simeq 0.8025...$ with $C$ Catalan's constant. 
\end{corollary}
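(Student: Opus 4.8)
The plan is to specialise Lemma~\ref{RootedHikesLemma} to the square lattice, where the generating function of closed walks is explicitly known, and then to perform a single genuine integral evaluation at the end. Lemma~\ref{RootedHikesLemma} already gives $\tilde{\zeta}(z)=\exp\!\big(\int \tfrac1z(R(z)-1)\,dz\big)$, with $R(z)$ the generating function of closed walks from a fixed vertex, so the first task is to write down $R(z)$ on $\mathbb{Z}^2$. Under the change of variables $(x,y)\mapsto(x+y,x-y)$ the four lattice moves become independent $\pm1$ steps in each new coordinate, so a closed walk of length $2n$ factors as a pair of returning one-dimensional walks, each counted by $\binom{2n}{n}$, while there are no closed walks of odd length. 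Hence $R(z)=\sum_{n\geq0}\binom{2n}{n}^2 z^{2n}$.

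Next I would substitute this series into Lemma~\ref{RootedHikesLemma} and integrate term by term. Since $R(z)-1=\sum_{n\geq1}\binom{2n}{n}^2 z^{2n}$ vanishes to order $z^2$, the antiderivative normalised to vanish at $z=0$ is $\sum_{n\geq1}\binom{2n}{n}^2 z^{2n}/(2n)$, which is exactly the claimed exponent. Exponentiating and collecting coefficients produces the displayed power series for $\tilde{\zeta}(z)$, and formally inverting that series yields $\tilde{\mu}(z)=1/\tilde{\zeta}(z)$; both are routine symbolic manipulations requiring no new idea.

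The only substantive step is the evaluation of $\alpha=\tilde{\zeta}^{-1}(1/\lambda)$ at $\lambda=4$, i.e.\ showing $\log\alpha=-\sum_{n\geq1}\binom{2n}{n}^2/(2n\cdot 16^n)=-\log4+4C/\pi$. Here I would invoke the identity $R(z)=\tfrac{2}{\pi}K(16z^2)$, which follows from the expansion $K(m)=\tfrac{\pi}{2}\sum_{n\geq0}\binom{2n}{n}^2 m^n/16^n$, and, after the substitution $u=16z^2$, rewrite the exponent at $z=1/4$ as $\tfrac12\int_0^1\tfrac{1}{u}\big(\tfrac{2}{\pi}K(u)-1\big)\,du$. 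Inserting the integral representation of $K$ and exchanging the order of integration, the inner $u$-integral evaluates in closed form to $2\log\!\big(2/(1+\cos\theta)\big)$, so that the exponent collapses to $\tfrac{2}{\pi}\int_0^{\pi/2}\log\!\big(2/(1+\cos\theta)\big)\,d\theta$. Writing $1+\cos\theta=2\cos^2(\theta/2)$ and appealing to the classical log-cosine integrals then reduces everything to $\int_0^{\pi/4}\log\cot x\,dx=C$, the standard integral representation of Catalan's constant.

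I expect this last evaluation to be the main obstacle: the preceding steps are combinatorial and symbolic bookkeeping, whereas here one must handle the logarithmic blow-up of $K(u)$ as $u\to1^-$ and justify the interchange of the $\theta$- and $u$-integrations before recognising the resulting definite integral as $4C/\pi$. Assembling the constants gives $\log\alpha=4C/\pi-\log4$, i.e.\ $\alpha=\tfrac14 e^{4C/\pi}$, which completes the corollary.
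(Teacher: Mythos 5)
Your proposal is correct, but it takes a different route from the paper for the series part and goes further on the constant. The paper deliberately bypasses Lemma~\ref{RootedHikesLemma} and re-derives $\tilde{\zeta}(z)$ from scratch: it takes the finite $n\times n$ square lattices $G^{\text{Sq}}_{n^2}$, uses their explicit eigenvalues $2\cos(\pi j/(n+1))+2\cos(\pi k/(n+1))$, passes to the product-integral $\exp\big(\int_S\log(1-2z\cos\pi x-2z\cos\pi y)\,dx\,dy\big)$, and evaluates the moment integrals $\int_S(\cos\pi x+\cos\pi y)^{2n}dx\,dy=\binom{2n}{n}^2 4^{-n}$ to recover $-\int\frac1z(R(z)-1)\,dz$; this serves as an independent consistency check of the lemma on the square lattice. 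You instead take the lemma as given and only need the classical fact that $\mathbb{Z}^2$ has $\binom{2n}{n}^2$ closed walks of length $2n$ (via the diagonal change of coordinates), which is exactly the ``immediate corollary'' reading the paper mentions but does not write out — shorter, at the cost of not re-verifying the lemma. Where your write-up adds genuine value is the evaluation of $\alpha$: the paper simply asserts $\alpha=\tfrac14 e^{4C/\pi}$, whereas you supply the derivation via $R(z)=\tfrac{2}{\pi}K(16z^2)$, the substitution $u=16z^2$, the inner integral $\int_0^1 u^{-1}\big((1-u\sin^2\theta)^{-1/2}-1\big)du=2\log\!\big(2/(1+\cos\theta)\big)$, and the log-cosine/Catalan integrals; I checked the signs and the bookkeeping ($\log\alpha=-\log 4+4C/\pi$) and they are right. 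The only mild gaps are ones you flag yourself — justifying the interchange of integrals near the logarithmic singularity of $K$ at $u=1$ (harmless, since everything is nonnegative and Tonelli applies) and the precise form $\int_0^{\pi/4}\log\cos x\,dx=\tfrac{C}{2}-\tfrac{\pi}{4}\log 2$ rather than the $\log\cot$ integral alone — neither of which affects correctness.
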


\begin{figure}[!t]
\begin{center}
\vspace{-3mm}
\includegraphics[width=.8\textwidth]{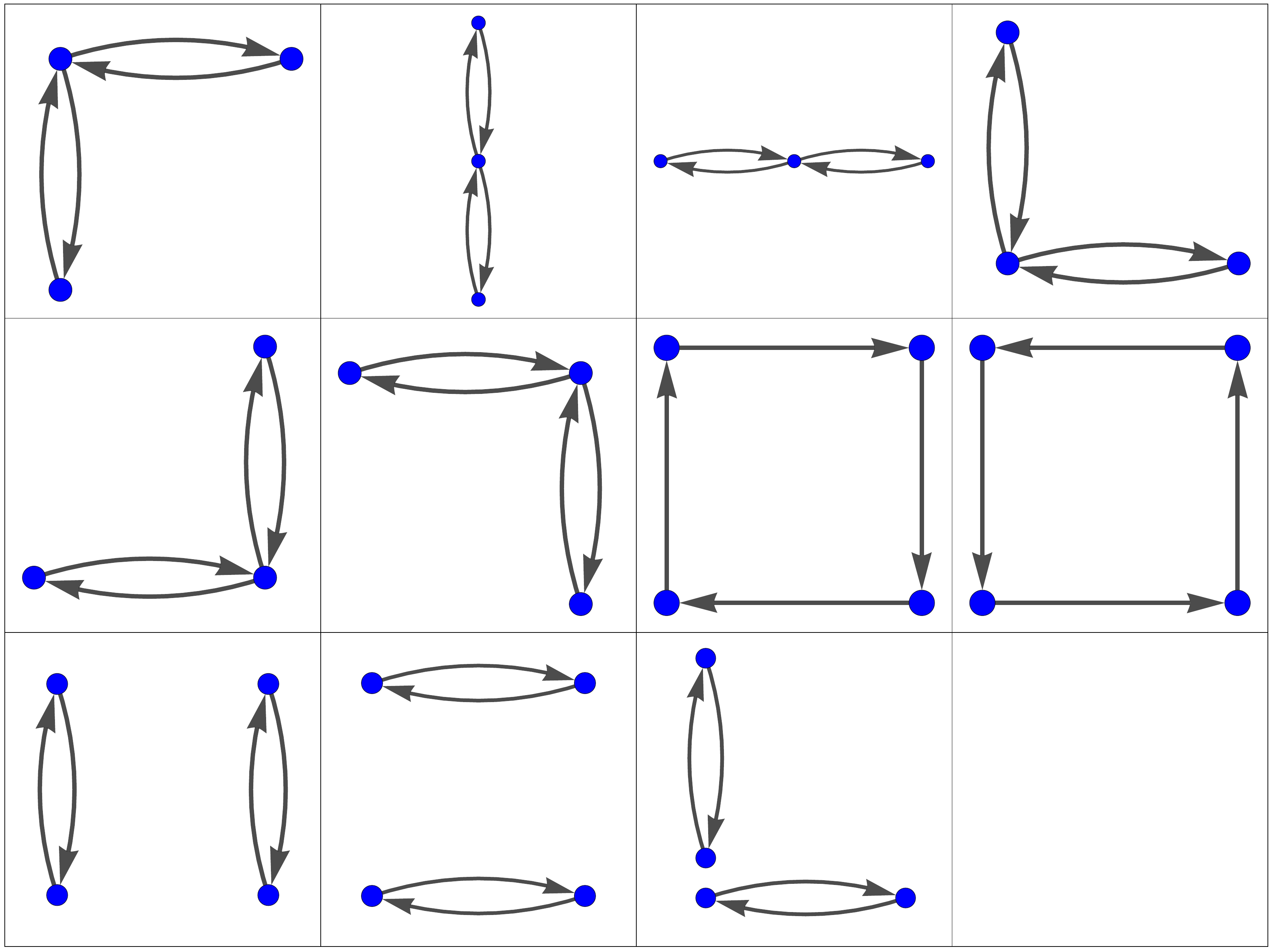}
\caption{The $[z^4]\tilde{\zeta}(z)=11$ objects of length 4 giving rise to distinct rooted hikes of length 4 on the infinite square lattice. Rigorously speaking, rooted hikes are obtained on considering the above objets up to translation of each walk.}
\label{RootedHikes}
\end{center}
\vspace{-5mm}
\end{figure}

\begin{proof}
While this result is an immediate corollary of the precedent lemma, we can prove it directly by considering the sequence of small $n\times n$ square lattices $G^{\text{Sq}}_{n^2}$ converging to $G$ as described in Appendix~\ref{GraphSeq}. The eigenvalues of $G^{\text{Sq}}_{N:=n^2}$ are $\lambda(i,k)=2\cos\Big(\pi \frac{j}{n+1}\Big)+2\cos\Big(\pi \frac{k}{n+1}\Big)$, where $j$ and $k$ are two integers between 1 and $n$. Product-integration yields
$$
\lim_{N\to\infty}\zeta_N(z)^{1/N}= \exp\left(\int_{S} \log\big(1-2z\cos(\pi x)-2z \cos(\pi y)\big) dx dy\right).
$$
with $S$ the unit square $[0,1]\times[0,1]$. We now obtain the relation to $R(z)$ directly. Observe that 
\begin{align*}
&\int_S \log\big(1-2z\cos(\pi x)-2z \cos(\pi y)\big) dx dy = \\
&\hspace{15mm}-\sum_{n\geq 1} \frac{(2z)^{2n}}{2n}\int_S\big( \cos(\pi x)+ \cos(\pi y)\big)^{2n} dx dy,
\end{align*} 
where we used the fact that odd powers of the sum of the cosines must have 0 integral over $S$ since both cosines are symmetric functions on $S$.
Then we have
\begin{align*}
\int_S\big( \cos(\pi x)+ \cos(\pi y)\big)^{2n} dx dy &= \sum_{h=0}^n\binom{2n}{2h}\int_0^1 \cos(\pi x)^{2n-2h}dx \int_0^1 \cos(\pi y)^{2h} dy,\\
&=\sum_{h=0}^n\binom{2n}{2h}\frac{\Gamma \left(n-h+\frac{1}{2}\right)}{\sqrt{\pi } (n-h)!} \frac{\Gamma \left(h+\frac{1}{2}\right)}{\sqrt{\pi } h!},\\
&=\frac{4^n \Gamma \left(n+\frac{1}{2}\right)^2}{\pi  (n!)^2}=\binom{2n}{n}^{\!\!2}4^{-n}.
\end{align*}
It follows that  
$
\int_S \log\big(1-2z\cos(\pi x)-2z \cos(\pi y)\big) dx dy =-\sum_{n=1}^{\infty} \binom{2n}{n}^{\!\!2} \,\frac{z^{2n}}{2n}=-\int \frac{1}{z}\big(R(z)-1\big) dz.
$
%This can also be proven using the relation from Bell polynomials between the trace and determinants, noting once more that the trace is $N$ times a single entry of the resolvent on vertex transitive graphs.
\end{proof}

Lemma~\ref{RootedHikesLemma} implies that asymptotically, when $N\gg 1$, $\lim_{z\to1/\lambda^-} \zeta_N(z)^{-1}\sim \alpha^N$ is well defined. We will see later on that  $\alpha^N$ relates the density of hikes to that of walks on an infinite graphs. More precisely, it can be interpreted as follows: every time a vertex is added to a graph $G^{\text{Tor}}_N$ or $G^{\text{Sq}}_N$, $N\gg 1$, the fraction of hikes which are closed walks is multiplied by $\alpha$.\\ 

Returning to the fraction of hikes on $G^{\text{Tor}}_N$ which are walk whose last erased loop is $p$, Lemma~\ref{RootedHikesLemma} and our naive strategy suggest that we express this fraction asymptotically for $N\gg1$ as
\begin{equation}\label{Decompo1}
\det\!\left(\mathsf{I}-\frac{1}{\lambda}\mathsf{A}_{G^{\text{Tor}}_N\backslash p}\right)\stackrel{\,?}{=}\alpha^N\lim_{z\to\lambda^{-1}}\det\!\big(\mathsf{I}+z\mathsf{R}_N(z)\mathsf{B}_p\big).
\end{equation}
In fact the right hand side is divergent under the limit $z\to1/\lambda^-$, but we will see below that its divergence is generic: it is the same for any SAP $p$  
%as well as all closed walks,\footnote{The divergence is logarithmic for planar lattices, as is readily apparent on the square lattice. Here the number of walks of length $2n$ is $\binom{2n}{n}^2\sim 4^{2n}/n\pi$, $n\gg1$.}% i.e. it is the same divergence 
as well as for all closed walks. %$R(z)$ at $1/\lambda^-$. 
In other terms, the right hand side diverges only because the series of all walks diverges at the point $z\to1/\lambda^-$, hinting that the fraction of walks whose last erased loop is $p$ with respect to all walks might itself be well defined. To put this observation on firm foundations, we need to show that $\lim_{z\to1/\lambda^-}\det\!\big(\mathsf{I}+z\mathsf{R}_N(z)\mathsf{B}_p\big)/R(z)$ is well defined, and that the fraction of all walks with respect to all hikes is well defined as well, in spite of the generic divergence. These two results rely on different techniques. We start with proving of the second fact without which the first would be useless. 

\begin{lemma}\label{WalkstoHikes}
Let $G$ be an infinite vertex-transitive graph of bounded degree. Let $\{G^{\text{Tor}}_N\}_{N\in\mathbb{N}}$ be the small tori sequence of vertex-transitive graphs converging to $G$. Let $\bullet$ be any vertex of $G$. Then the fraction $F_\bullet$ of hikes which are closed walks from $\bullet$ to itself is asymptotically given by 
$$
F_\bullet\sim\frac{\alpha^N}{N},\quad N\to\infty.
$$
In this expression $\alpha$ is the constant defined in Lemma~\ref{RootedHikesLemma}. 
\end{lemma}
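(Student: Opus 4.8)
The plan is to express the fraction $F_\bullet$ as a ratio of two generating-function coefficients and extract its asymptotics in $N$ using the rooted-hike interpretation from Lemma~\ref{RootedHikesLemma}. First I would write $F_\bullet = |\mathcal{W}_\bullet|/|\mathcal{H}|$, where the numerator counts hikes that are closed walks based at the fixed vertex $\bullet$ and the denominator counts all hikes, both considered on $G^{\text{Tor}}_N$ and compared at equal length before passing to the appropriate limit. The key observation is that a hike which is a closed walk is precisely a \emph{pyramid} (a hike with a unique right prime divisor), and that the generating function of such walks rooted at $\bullet$ is exactly $R(z)=\mathsf{R}(z)_{ii}$, the diagonal resolvent entry, by vertex-transitivity independent of the choice of $\bullet$. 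Meanwhile, the total count of hikes is governed by $\zeta_N(z)$, whose $N$th root converges to $\tilde\zeta(z)$ per Eq.~(\ref{ZetaN}).

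The central step is to connect the \emph{single-vertex} walk count $R(z)$ to the \emph{global} hike count $\zeta_N(z)$ on the torus. Here I would exploit the relation already established in the proof of Lemma~\ref{RootedHikesLemma}: the log-derivative of $\zeta_N(z)^{1/N}$ equals a single diagonal entry of the resolvent, i.e. $\tfrac{d}{dz}\log\tilde\zeta(z) = \tfrac{1}{z}\bigl(R(z)-1\bigr)$. Because $G$ is vertex-transitive, the total weight of all closed walks on $G^{\text{Tor}}_N$ (summed over all $N$ starting vertices) is $N\,R(z)$ asymptotically, whereas $\zeta_N(z)$ counts \emph{all} hikes, among which closed walks rooted at a single vertex form the desired subfamily. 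Dividing, the factor of $N$ in the denominator arises because the total walk weight scales like $N$ times the per-vertex weight $R(z)$, while the hike normalisation carries the factor $\zeta_N(z)\sim\tilde\zeta(z)^N$. Evaluating at the radius of convergence $z\to 1/\lambda^-$ and using $\alpha=\tilde\zeta^{-1}(1/\lambda)$ from Lemma~\ref{RootedHikesLemma}, this produces $F_\bullet\sim \alpha^N/N$.

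I expect the main obstacle to be the careful handling of the limit $z\to 1/\lambda^-$ simultaneously with $N\to\infty$: both $R(z)$ and $\zeta_N(z)$ individually diverge or degenerate at the dominant singularity, so the factor $1/N$ must be extracted without interchanging the two limits illegitimately. The resolution is to first fix $N$ and take $z\to 1/\lambda^-$ to obtain $\lim_{z\to1/\lambda^-}\zeta_N(z)^{-1}\sim\alpha^N$ (already noted after Lemma~\ref{RootedHikesLemma}), and then account for the additional polynomial-in-$N$ factor coming from the fact that closed walks are pinned to one vertex out of $N$. Concretely, since hikes are counted up to the global structure while the walk count is per-root, the bijective correspondence between rooted hikes and walks contributes the $1/N$ dilution: one vertex in $N$ serves as the basepoint. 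Making this dilution argument rigorous---showing that the subdominant corrections to both the numerator and denominator are genuinely lower order so that the leading behaviour is exactly $\alpha^N/N$---is the delicate part, and I would phrase it through the rooted-hike generating function $\tilde\zeta(z)$ whose coefficients are the per-vertex hike densities, so that multiplying by $N$ recovers the torus totals and the single-vertex walk restriction divides it back out.
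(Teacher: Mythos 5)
Your route is genuinely different from the paper's. The paper starts from the identity $F_\bullet=\det(\mathsf I-\tfrac1\lambda\mathsf A_{G^{\text{Tor}}_N\setminus\bullet})$ (the Length Corollary applied to the ``prime'' $\bullet$) and evaluates this singular determinant by a perturbation expansion around $\det(\mathsf I-\tfrac1\lambda\mathsf A_N)$: order zero vanishes, order one gives $2\alpha_N/N$ via $\operatorname{adj}(\mathsf I-\tfrac1\lambda\mathsf A_N)=\alpha_N\mathsf P_{N,\lambda}=\tfrac{\alpha_N}{N}\mathsf J$, order two gives $-\alpha_N/N$ via the corolla degree sums and the recursions $c_0=0$, $c_1=-1$, $\sum_i n_{2,i}c_{2,i}=-\lambda^2$, and all higher orders vanish. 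Your singularity comparison of the walk and hike generating functions is a legitimate alternative and, once completed, is actually shorter: the cofactor identity $\det(\mathsf I-z\mathsf A_{G_N\setminus\bullet})=\det(\mathsf I-z\mathsf A_{G_N})\,\mathsf R_N(z)_{\bullet\bullet}$ reduces the quantity to $\lim_{z\to1/\lambda^-}\det(\mathsf I-z\mathsf A_{G_N})R_N(z)$, a product of a simple zero and a simple pole whose value is exactly $\alpha_N\cdot(\mathsf P_{N,\lambda})_{\bullet\bullet}=\alpha_N/N$.

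As written, however, your argument has a gap precisely where the $1/N$ appears. You obtain it from a ``dilution'' heuristic (one vertex in $N$ serves as the basepoint), but $F_\bullet$ compares walks at $\bullet$ to \emph{hikes}, not to the total walk count, so the heuristic does not by itself produce the factor. What is needed is the pair of residue identifications $[z^\ell]R_N(z)\sim\lambda^\ell(\mathsf P_{N,\lambda})_{\bullet\bullet}=\lambda^\ell/N$ (regularity of $G^{\text{Tor}}_N$ forces the Perron projector to be $\mathsf J/N$) and $[z^\ell]\zeta_N(z)\sim\lambda^\ell/\alpha_N$ (Proposition~\ref{LengthScaling}), both as $\ell\to\infty$ at fixed $N$; their ratio is $\alpha_N/N$, and only then does Lemma~\ref{RootedHikesLemma} convert $\alpha_N$ into $\alpha^N$. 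Note also that the log-derivative identity $\tfrac{d}{dz}\log\tilde\zeta(z)=\tfrac1z\big(R(z)-1\big)$, which you lean on to connect the two counts, relates $\mathrm{Tr}\,\mathsf R_N(z)$ to the coefficients of $\log\zeta_N(z)$, not of $\zeta_N(z)$ itself, so it cannot deliver the coefficient-wise ratio you need; the spectral decomposition of $\mathsf R_N(z)$ (equivalently, the Length Corollary) is the correct tool. With those two identifications inserted your proof closes, and the order of limits is handled exactly as you propose: $\ell\to\infty$ first at fixed $N$, then $N\to\infty$.
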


The $1/N$ factor in $F_\bullet$ originates from that we have fixed the vertex $\bullet$. If instead we consider translation invariant quantities, i.e. we consider all closed walks irrespectively of their starting point, then the fraction of hikes which are closed walks is asymptotically $\alpha^N$, $N\gg1$.

\begin{remark}
Let $|W|_N(\ell)$ and $|H|_N(\ell)$ be the \emph{total} number of walks and of hikes up to length $\ell\geq 0$ on $G_N^{\text{Tor}}$, respectively. Then, Lemma~(\ref{WalkstoHikes}) is equivalent to 
$$
\lim_{N\to \infty}\lim_{\ell\to\infty}\frac{1}{N}\log\big(|W|_N(\ell)/|H|_N(\ell)\big)=\log(\alpha),
$$
that is the density of the logarithm of the fraction of hikes which are closed walks is well defined on $G$.
\end{remark}

\begin{proof}
On the finite graphs $G^{\text{Tor}}_N$, the fraction of all closed walks off $\bullet$ with respect to hikes is given by $F_\bullet=\det(\mathsf{I}-\frac{1}{\lambda} \mathsf{A}_{G^{\text{Tor}}_N\backslash \bullet})$. Let $\mathsf{B}_{N,\bullet}$ be the $N\times N$ identically zero matrix except on edges adjacent to $\bullet$ where its value is 1, i.e. so that 
$\mathsf{A}_{G^{\text{Tor}}_N\backslash \bullet}= \mathsf{A}_{G^{\text{Tor}}_N}-\mathsf{B}_{N,\bullet}$. Then $F_\bullet=\det(\mathsf{I}-\frac{1}{\lambda} \mathsf{A}_{G^{\text{Tor}}_N}+\frac{1}{\lambda}\mathsf{B}_{N,\bullet})$ and we determine this determinant by expansion around  $\mathsf{A}_{G^{\text{Tor}}_N}$, hereafter denoted $\mathsf{A}_{N}$ in order to alleviate the equations. This gives
\begin{align*}
F_\bullet&=\det\left(\mathsf{I}-\frac{1}{\lambda}\mathsf{A}_N\right)+\frac{1}{\lambda}\operatorname{Tr}\left(\operatorname{adj}\left(\mathsf{I}-\frac{1}{\lambda}\mathsf{A}_N\right)\mathsf{B}_{N,\bullet}\right)\\
&\hspace{10mm}-\frac{1}{2\lambda^2}\,\lim_{z\to \lambda^{-1}}\det\left(\mathsf{I}-z\mathsf{A}_N\right)
\left[\mathrm{Tr}^2\big(\mathsf{R}_N(z) \mathsf{B}_{N,\bullet}\big)-\mathrm{Tr}\big(\mathsf{R}_N(z) \mathsf{B}_{N,\bullet}\big)^2\right].
\end{align*}
All orders of degree higher than two are exactly zero.
In this expression $\mathsf{R}_N(z):=\big(\mathsf{I}-z\mathsf{A}_N\big)^{-1}$, $\mathrm{Tr}(\mathsf{M})$ designates the trace of a matrix $\mathsf{M}$ and $\operatorname{adj}(\mathsf{M})$ its adjugate. 

Order zero of the expansion is null since $\lambda$ is an eigenvalue of $G^{\text{Tor}}_N$. By the same token, the adjugate matrix of the first order is proportional to the projector $\mathsf{P}_{N,\lambda}$ onto the eigenvector associated with $\lambda$ on $G^{\text{Tor}}_N$
$$
\operatorname{adj}\left(\mathsf{I}-\frac{1}{\lambda_N}\mathsf{A}_N\right) = \alpha_N \,\mathsf{P}_{N,\lambda},
$$
where $\alpha_N:=\prod_{\lambda_i\in\mathrm{Sp}(\mathsf{A}_N)\atop \lambda_i\neq \lambda_N}(1-\lambda_i/\lambda_N)$. Since $G^{\text{Tor}}_N$ is regular $\mathsf{P}_{\lambda_N}=\frac{1}{N}\mathsf{J}$, where $\mathsf{J}_{ij}=1$. Then 
$$
\frac{1}{\lambda}\operatorname{Tr}\left(\operatorname{adj}\left(\mathsf{I}-\frac{1}{\lambda}\mathsf{A}_N\right)\mathsf{B}_{N,\bullet}\right)= \frac{\alpha_N}{\lambda}\times \frac{2\lambda}{N}=2\frac{\alpha_N}{N},
$$
since $\mathsf{B}_{N,\bullet}$ has exactly $2\lambda$ non-zero entries. 

The second order of the expansion is always well defined, as is readily seen from the equivalent form
\begin{align}
&-\frac{1}{2\lambda^2}\,\lim_{z\to \lambda^{-1}}\det\left(\mathsf{I}-z\mathsf{A}_N\right)
\left[\mathrm{Tr}^2\big(\mathsf{R}_N(z) \mathsf{B}_{N,\bullet}\big)-\mathrm{Tr}\big(\mathsf{R}_N(z) \mathsf{B}_{N,\bullet}\big)^2\right]\nonumber\\
&\hspace{20mm}=\frac{\alpha_N}{\lambda^2}\left[\mathrm{Tr}\big(\mathsf{P}_{N,\lambda}\mathsf{B}_{N,\bullet}\big)\mathrm{Tr}\big(\mathsf{C}_{N}\mathsf{B}_{N,\bullet}\big)-\mathrm{Tr}\big(\mathsf{P}_{\lambda_N}\mathsf{B}_{N,\bullet}\mathsf{C}_{N}\mathsf{B}_{N,\bullet}\big)\right].\label{Order2FWalks}
\end{align}
where 
\begin{equation}\label{CnDef}
\mathsf{C}_N:=\lim_{z\to1/\lambda^-}\big(\mathsf{I}-\mathsf{P}_{N,\lambda}\big)\mathsf{R}_N(z),
\end{equation}
and
$$
\alpha_N:=\lim_{z\to1/\lambda^-}(1-z\lambda)^{-1}\det\left(\mathsf{I}-z\mathsf{A}_N\right)=\prod_{\lambda_i\in\mathrm{Sp}(\mathsf{A}_N)\atop \lambda_i\neq \lambda_N}(1-\lambda_i/\lambda_N),
\vspace{-3mm}
$$
 which, by Lemma~\ref{RootedHikesLemma}, is asymptotically given by $\alpha^N$ for $N\gg 1$ and $\alpha=\tilde{\zeta}(1/\lambda)^{-1}$. 
The second line of Eq.~(\ref{Order2FWalks}) stems directly from the observation that $\mathsf{P}_{N,\lambda}$ is a rank one projector. Indeed for any such projector $\mathsf{Q}$ and matrix $\mathsf{M}$ of bounded norm we have $\mathrm{Tr}\big((\mathsf{QM})^2\big)=\mathrm{Tr}^2\big(\mathsf{QM}\big)$.

Each term of the second line of Eq.~(\ref{Order2FWalks}) can be evaluated generically. For convenience, let $c_0$ be the diagonal entry of $\mathsf{C}_G$, $c_1$ the entry relating first neighbours on $G$ and $c_{2,i}$, all the distinct entries of $\mathsf{C}_G$ relating second neighbours on $G$. Then $\mathrm{Tr}\big(\mathsf{P}_{N,\lambda}\mathsf{B}_{N,\bullet}\big)=2\lambda/N$, $\mathrm{Tr}\big(\mathsf{C}_{N}\mathsf{B}_{N,\bullet}\big)=2\lambda c_1$ and 
\begin{align*}
N\,\mathrm{Tr}\big(\mathsf{P}_{\lambda_N}\mathsf{B}_{N,\bullet}\mathsf{C}_{N}\mathsf{B}_{N,\bullet}\big)&=\sum_{\text{ordered}\atop\text{vertex pairs } v_i,\,v_j}\hspace{-3mm}\text{deg}(v_i)\text{deg}(v_j)\,\mathsf{C}_{v_iv_j},\\
&=(\lambda^2+\lambda)c_0+2\lambda^2c_1+\sum_{i} n_{2,i}\,c_{2,i},
\end{align*}
where $\text{deg}(v_i)$ is the degree of $v_i$ on the corolla $G_\bullet$ (see Fig.~\ref{FigCorolla} for an illustration of corolla graphs) and $n_{2,i}$ is the number of times entry $c_{2,i}$ appears. Recursion relations between the entries of $\mathsf{C}_G$ presented in Appendix~\ref{Recursion} give $c_0=0$, $c_1=-1$, $\sum_{i=1}^{\lfloor \lambda /2\rfloor} n_{2,i}c_{2,i}=-\lambda^2$.
\begin{figure}[!t]
\begin{center}
\vspace{-3mm}
\includegraphics[width=1\textwidth]{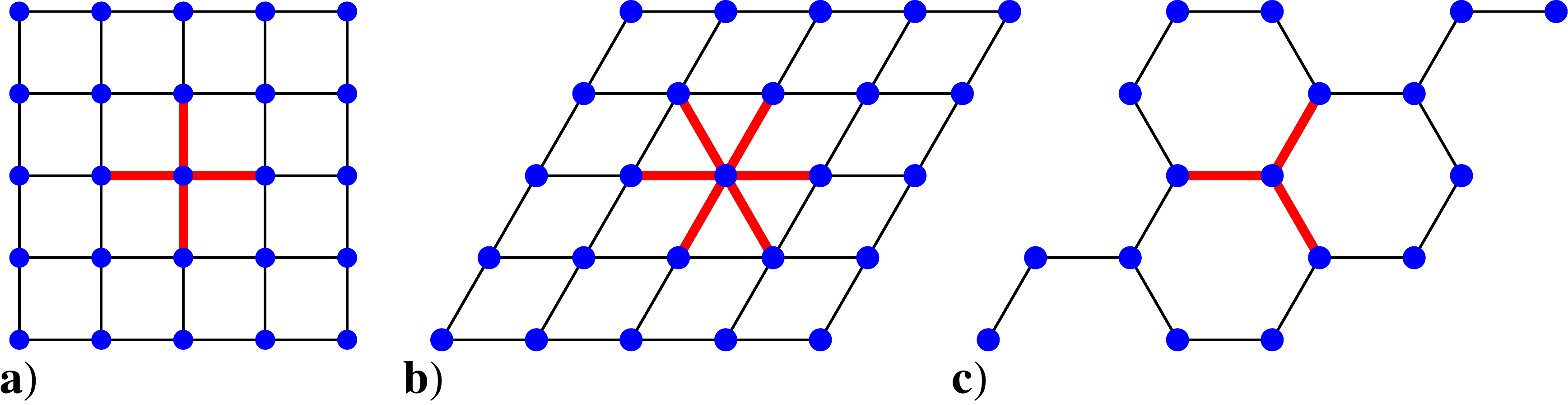}
\caption{In thick red edges, corolla graphs $G_\bullet$ on: \textbf{a)} the square lattice; \textbf{b)} the triangular lattice; \textbf{c)} the hexagonal lattice. The same construction and proof given in this section applies to any vertex transitive graph, in particular it is not limited to planar lattices and corollas can be considered in any dimension.}
\label{FigCorolla}
\end{center}
\vspace{-5mm}
\end{figure}
The second order  then generically evaluates to
$$
\frac{\alpha_N}{N\lambda^2}\left(-4\lambda^2-(-2\lambda^2-\lambda^2)\right)=-\frac{\alpha_N}{N}.
$$
%Observe that this graph is a corolla with $\lambda$ branches over which $\mathsf{C}_N$ takes $2+\lfloor \lambda/2\rfloor$ distinct values because of the symmetries. COROLLA PROOF UNFINISHED HERE
We can finally put the zeroth, first and second orders of the determinant expansion together, yielding
$$
\det\left(\mathsf{I}-\frac{1}{\lambda}\mathsf{A}_{N}\right)=0+ 2\frac{\alpha_N}{N}-\frac{\alpha_N}{N}=\frac{\alpha_N}{N}.
$$
Asymptotically, for $N\gg1$, $\alpha_N\sim \alpha^N$ as per Lemma~\ref{RootedHikesLemma}, which gives the result. 
\end{proof}

The method employed in the proof of Lemma~\ref{WalkstoHikes} to calculate the fraction of hikes which are closed walks extends to any self-avoiding polygon. In this more general situation, the expansion of the determinant always terminates at a finite order which grows  with the polygon's length. 
The formulas to be evaluated become very involved however, to the point of being effectively impractical even for short SAPs. We nonetheless provide the general expression of the formal expansion at all orders for any SAP in Appendix~\ref{DetExpansion}.

To further exemplify this determinant expansion it is worth considering the calculation of the fraction $F_e$ of hikes which are walk whose last erased loop is an edge $e$ on the square lattice. In this case, the expansion of $F_e$ around $\det(\mathsf{I}-\frac{1}{\lambda} \mathsf{A}_{G})$ has exactly four non-zero orders each of which involves complicated sums of entries of the graph resolvent $\mathsf{R}(z)$
For example, the second order is asymptotically equal to
\begin{align*}
&\lim_{z\to1/\lambda^-}\frac{\alpha_N}{\lambda^2N}\Big(112\mathsf{R}_{10}(z)-38\mathsf{R}_{00}(z)-40\mathsf{R}_{11}(z)\\
&\hspace{30mm}-20\mathsf{R}_{20}(z)-12\mathsf{R}_{21}(z)-2\mathsf{R}_{30}(z)\Big)=\frac{\alpha^N}{\lambda^2 N}\times-10,
\end{align*}
for $N\gg1$. Although $\lambda=4$ on the square lattice, we left it unevaluated to help see that this is the second order of the expansion.
Here $\mathsf{R}_{ij}(z)$ designates the entry of the resolvent relating two vertices with distance $i$ along $x$ and $j$ along $y$.
Taken together, the four orders of the determinant expansion give asymptotically for $N\gg 1$
$$
F_e=\frac{1}{\lambda^2}\left(14\frac{\alpha^N}{\lambda N}-10\frac{\alpha^N}{\lambda^2 N}-80\frac{\alpha^N}{\lambda^3 N}+96\frac{\alpha^N}{\lambda^4 N}\right) = \frac{2\alpha^N}{N\lambda^2}= \frac{\alpha^N}{8N},
$$
where the overall $1/\lambda^2$ factor in front arises from the fact that the edge has length $2$ and would simply be $1/\lambda^{\ell(p)}$ for a general SAP.  
Lemma~\ref{WalkstoHikes} indicates that $\alpha^N/N$ relates hikes to closed walks, while the remaining $1/8$ factor is thus the asymptotic fraction of closed walks which are walk whose last erased loop is an edge cycle. This means in particular that, whenever $[z^\ell]R(z)\neq0$, 
$$
\lim_{\ell\to\infty} \frac{[z^\ell]R_e(z)}{[z^\ell]R(z)}=\frac{1}{8},
$$
where $R_e(z)$ is the generating function given in Eq.~(\ref{Redge}).\\

It might struck the reader that a remarkable number of simplifications must have taken place to yield such a simple answer as $1/8$ from the complicated orders of the expansion. This hints at the existence of a much simpler calculation procedure, and this is precisely the procedure in the spirit of Eq.~(\ref{Decompo1}). Now however, we will be able to remove the generic divergence of the naive approach, having separately obtained an asymptotic expression for the fraction of walks with respect to hikes by Lemma~\ref{WalkstoHikes}.

\begin{lemma}
Let $G$ be an infinite vertex-transitive graph of bounded-degree and let $\lambda$ be the supremum of its spectrum. Let $\{G^{\text{Tor}}_N\}_{N\in\mathbb{N}}$ be the small tori sequence of vertex-transitive graphs converging to $G$. Then the asymptotic fraction of closed walks which are walk whose last erased loop is $p$ is well defined and given by
\begin{align*}
\frac{F_p}{\lambda^{\ell(p)}}&=\lim_{N\to\infty}\,\,\lim_{z\to1/\lambda^{-}}\,\,z^{\ell(p)}\frac{\det\!\big(\mathsf{I}+z\mathsf{R}_N(z)\mathsf{B}_p\big)}{\det\!\big(\mathsf{I}+z\mathsf{R}_N(z)\mathsf{B}_\bullet\big)},\\
&=\frac{1}{\lambda^{\ell(p)+1}} \,\,\mathsf{deg}^{\mathrm{T}}.\,\mathrm{adj}\left(\mathsf{I}+\mathsf{C}_G\big|_p.\mathsf{B}_p\right)\,.\,\mathsf{1},
\end{align*} 
where $\mathsf{C}_G\big|_p$ is the restriction to $G_p$ of $\mathsf{C}_G:=\lim_{z\to1/\lambda^-}(\mathsf{I}-\mathsf{P}_\lambda)\mathsf{R}(z)$.
%and asymptotically, for $N\gg1$, 
%\begin{equation}\label{WalkHikeFraction}
%\lim_{z\to1/\lambda^-}\det\!\big(\mathsf{I}+z\mathsf{R}_N(z)\mathsf{B}_\bullet\big)\sim \frac{1}{N}.
%\end{equation}
\end{lemma}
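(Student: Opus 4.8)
The plan is to realise the translation-invariant fraction $F_p/\lambda^{\ell(p)}$ as a quotient of two quantities that are each \emph{pinned} to a fixed location in the lattice, so that the divergent, basepoint-dependent normalisation they share cancels and a finite number remains. The numerator counts hikes which are closed walks whose last erased loop is the fixed polygon $p$; the denominator counts hikes which are closed walks through a single fixed vertex $\bullet$. Because pinning $p$ and pinning $\bullet$ break translation invariance in exactly the same way, both quantities carry the common factor $\alpha^N/N$ supplied by Lemma~\ref{WalkstoHikes}, and their ratio is the sought conditional fraction.

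Concretely, first I would apply the Length corollary of the Finite Sieve Theorem on each finite graph $G^{\text{Tor}}_N$ to record that, as the hike length $l\to\infty$, the fraction of hikes which are closed walks with last erased loop $p$ is $\mathsf{f}_p:=\lambda^{-\ell(p)}\det(\mathsf{I}-\tfrac1\lambda\mathsf{A}_{G^{\text{Tor}}_N\backslash p})$, while the fraction which are closed walks through $\bullet$ is $\mathsf{f}_\bullet:=\det(\mathsf{I}-\tfrac1\lambda\mathsf{A}_{G^{\text{Tor}}_N\backslash\bullet})=F_\bullet$. Since $\mathsf{f}_p\sim\frac{F_p}{\lambda^{\ell(p)}}\cdot\frac{\alpha^N}{N}$ and $\mathsf{f}_\bullet\sim\frac{\alpha^N}{N}$ by Lemma~\ref{WalkstoHikes}, I obtain $F_p/\lambda^{\ell(p)}=\lim_{N\to\infty}\mathsf{f}_p/\mathsf{f}_\bullet$.

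Next I would feed the extension of Viennot's lemma (Proposition~\ref{InfiniteViennot}) into both $\mathsf{f}_p$ and $\mathsf{f}_\bullet$ through $\det(\mathsf{I}-\tfrac1\lambda\mathsf{A}_{G^{\text{Tor}}_N\backslash q})=\lim_{z\to1/\lambda^-}\zeta_N(z)^{-1}\det(\mathsf{I}+z\mathsf{R}_N(z)\mathsf{B}_q)$ for $q\in\{p,\bullet\}$. The factor $\zeta_N(z)^{-1}$ is common to numerator and denominator and cancels, which yields the first displayed equality of the lemma, the surviving $z^{\ell(p)}\to\lambda^{-\ell(p)}$ producing the explicit prefactor. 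To take the remaining limits I would split the resolvent spectrally as $\mathsf{R}_N(z)=\tfrac{\mathsf{P}_{N,\lambda}}{1-z\lambda}+(\mathsf{I}-\mathsf{P}_{N,\lambda})\mathsf{R}_N(z)$ with $\mathsf{P}_{N,\lambda}=\tfrac1N\mathsf{J}$ of rank one, so that on $G_p$ the matrix $z\mathsf{R}_N(z)\mathsf{B}_p$ splits into a rank-one pole term proportional to $\mathsf{J}\mathsf{B}_p=\mathsf{1}\,\mathsf{deg}^{\mathrm T}$ and a regular part governed by the regularised resolvent $\mathsf{C}_G|_p$ restricted to $G_p$. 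The matrix-determinant lemma for a rank-one update then shows that $\det(\mathsf{I}+z\mathsf{R}_N(z)\mathsf{B}_p)$ has, as $z\to1/\lambda^-$, a simple pole whose residue is proportional to $\mathsf{deg}^{\mathrm T}\mathrm{adj}(\mathsf{I}+\mathsf{C}_G|_p\mathsf{B}_p)\mathsf{1}$. The identical computation applies to $\mathsf{B}_\bullet$, so the poles cancel in the ratio; I would then fix the $\bullet$-residue to $\lambda$ by matching it against $F_\bullet=\alpha_N/N$ from Lemma~\ref{WalkstoHikes}, after which the accumulated powers of $\lambda$ collapse to the prefactor $\lambda^{-\ell(p)-1}$ and the closed form follows.

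The main obstacle is the analytic control of the double limit $\lim_{N\to\infty}\lim_{z\to1/\lambda^-}$ together with the clean cancellation of the two divergences. I must verify that the regular (non-pole) determinant contributions are genuinely subdominant as $z\to1/\lambda^-$, that the residues are finite and nonzero so that the ratio is well defined, and that $(\mathsf{I}-\mathsf{P}_{N,\lambda})\mathsf{R}_N(z)$ restricted to $G_p$ converges to $\mathsf{C}_G|_p$ as $N\to\infty$; this last convergence of the regularised resolvent is where the bounded degree of $G$ and the small-tori convergence of Appendix~\ref{GraphSeq} are indispensable.
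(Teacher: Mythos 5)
Your proposal follows essentially the same route as the paper: form the ratio of the pinned-$p$ and pinned-$\bullet$ sieve fractions, cancel the common $\zeta_N$ factor via the infinite-graph Viennot lemma, isolate the generic rank-one divergence of $z\mathsf{R}_N(z)\mathsf{B}_q$ through the projector $\mathsf{P}_{N,\lambda}=\tfrac1N\mathsf{J}$, and extract the coefficient $\mathsf{deg}^{\mathrm{T}}\mathrm{adj}(\mathsf{I}+\mathsf{C}_G|_p\mathsf{B}_p)\mathsf{1}$ by the matrix-determinant lemma before the divergences cancel in the ratio. The only cosmetic difference is that you phrase the singular part as a simple pole at fixed finite $N$ (consistent with the stated order of limits), whereas the paper tracks it as a $\mathrm{Li}_{d/2}$ singularity uniform in $N$; the cancellation mechanism and the resulting closed form are identical.
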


\begin{proof}
%We begin with the second claim, which follows from an application of Viennot's lemma extended to infinite graphs. We have for any finite $N$,
%$$
%\det\big(\mathsf{I}-z\mathsf{A}_{G_N\backslash \bullet}\big)=\zeta^{-1}_N(z)\det\!\big(\mathsf{I}+z\mathsf{R}_N(z)\mathsf{B}_\bullet\big).
%$$
%By the finite sieve Theorem~\ref{FiniteSieve}, the limit in $1/\lambda$ of $\det\big(\mathsf{I}-z\mathsf{A}_{G_N\backslash \bullet}\big)$ is $F_\bullet$ on $G_N$.  Lemma~\ref{WalkstoHikes} then states that this is asymptotically given by $\alpha^N/N$, $N\to\infty$, while Lemma~\ref{RootedHikesLemma} gives $\zeta^{-1}_N(z)\sim\alpha^N$, $N\to\infty$. This gives Eq.~(\ref{WalkHikeFraction}).\\  
According to the Finite Sieve Theorem, the asymptotic fraction of hikes which are closed walks whose last erased loop is $p$ on $G^{\text{Tor}}_N$ is $\lim_{z\to 1/\lambda^-}z^{\ell(p)}\det(\mathsf{I}-z\mathsf{A}_{G^{\text{Tor}}_N\backslash p})$. Since all the limits taken here are finite and well defined (as everything takes place on $G^{\text{Tor}}_N$), the asymptotic fraction of closed walks whose last erased loop is $p$ on these finite graphs is 
\begin{align*}
\frac{\lim_{z\to 1/\lambda^-}z^{\ell(p)}\det(\mathsf{I}-z\mathsf{A}_{G^{\text{Tor}}_N\backslash p})}{\lim_{z\to 1/\lambda^-}z^0\det(\mathsf{I}-z\mathsf{A}_{G^{\text{Tor}}_N\backslash \bullet})}&=\lim_{z\to 1/\lambda^-}z^{\ell(p)}\frac{\zeta_N(z)\det(\mathsf{I}-z\mathsf{A}_{G^{\text{Tor}}_N\backslash p})}{\zeta_N(z)\det(\mathsf{I}-z\mathsf{A}_{G^{\text{Tor}}_N\backslash \bullet})},\\
&=\lim_{z\to 1/\lambda^-}z^{\ell(p)}\frac{\det\!\big(\mathsf{I}+z\mathsf{R}_N(z)\mathsf{B}_p\big)}{\det\!\big(\mathsf{I}+z\mathsf{R}_N(z)\mathsf{B}_\bullet\big)}.
\end{align*}
%Here all the limits are well defined since we work on finite graphs.
We now turn to studying the behaviour of the right hand side as $N\to\infty$.  To this end, we expand $\mathsf{I}+z\mathsf{R}_N(z)\mathsf{B}_p$ around $1/\lambda$ with $z<1/\lambda$. 

We need to distinguish behaviours based on the dimensionality $d>1$ of the lattice under study. We ignore the trivial 1D case (for which the only SAP is the edge, and the fraction of closed walks whose last erased loop is the left or right edge attached to any vertex is 1/2).
On $d>1$ dimensional lattices we have,% following standard analysis \cite{fill2005}, 
\begin{equation}\label{Eq:CLi}
\mathsf{I}+z\mathsf{R}_N(z)\mathsf{B}_p=\mathsf{C}_{N}\mathsf{B}_p-\frac{1}{\lambda \pi}\mathsf{P}_{N,\lambda}\mathsf{B}_p\,\text{Li}_{d/2}(1-z\lambda)+O(1-z\lambda),
\end{equation}
with $\mathsf{C}_{N}$ as defined in Eq.~(\ref{CnDef}) and $\text{Li}_a(x):=\sum_{n>1}\frac{x^n}{n^{a}}$ is the $a$-polylogarithm function. Combinatorially, it arises here from summations over closed walks weighted by $\lambda^{-\ell}$, which leaves a residual total weight asymptotically given by $\ell^{-d/2}$ for all closed walks of length $\ell\gg1 $. The generic nature of the behaviour exhibited by $\mathsf{I}+z\mathsf{R}_N(z)\mathsf{B}_p$ is now readily apparent: 1) divergence occurs only on 2D lattices, where it is logarithmic; 2) it is the same for all SAPs; and 3) it is also the same for all closed walks (which are readily recovered upon taking $p$ to be length 0, i.e. $\mathsf{B}_p\equiv\mathsf{B}_\bullet$ is a corolla). Thanks to these observations, the determinant expansion at $1/\lambda^-$ is
\begin{align*}
\det\big(\mathsf{I}+z\mathsf{R}_N(z)\mathsf{B}_p\big)&=\det\left(\mathsf{C}_{N}\mathsf{B}_p-\frac{1}{\lambda \pi}\mathsf{P}_{N,\lambda}\mathsf{B}_p\,\text{Li}_{d/2}(1-z\lambda)+O(1-z\lambda)\right),\\
&\hspace{-10mm}=-\frac{1}{\pi N}\text{Li}_{d/2}(1-z\lambda)\frac{1}{\lambda} \,\,\mathsf{deg}^{\mathrm{T}}.\,\mathrm{adj}\left(\mathsf{I}+\mathsf{C}_{N}.\mathsf{B}_p\right)\,.\,\mathsf{1}\\
&\hspace{55mm}+o\big(\text{Li}_{d/2}(1-z\lambda)\big),
\end{align*}
where we used the matrix-determinant lemma and the QR decomposition 
$$
-\frac{1}{\lambda \pi}\mathsf{P}_{N,\lambda}\mathsf{B}_p=-\frac{1}{ \pi N}\times \frac{1}{\lambda}\times\mathsf{1}.\mathsf{deg}^{\mathrm{T}}.
$$
This decomposition relies on the observation that $\mathsf{P}_{N,\lambda}$ is the projector onto $\mathsf{1}$, i.e. that all $G^{\text{Tor}}_N$ are regular.\footnote{This step can be adapted should we consider lattices which are not vertex-transitive but for which $G/\text{Aut}(G)$ has finitely many vertices. This is beyond the scope of this work.}
We recall that in the above expression, $\mathsf{deg}=\mathsf{B}_p.\mathsf{1}=\text{diag}(\mathsf{B}_p^2)$ is the degree of vertices on $G_p$. Similarly, at $1/\lambda^-$,
$$
\det\big(\mathsf{I}+z\mathsf{R}_N(z)\mathsf{B}_\bullet\big)=-\frac{1}{\pi N}\text{Li}_{d/2}(1-z\lambda)+o\big(\text{Li}_{d/2}(1-z\lambda)\big),
$$
and finally 
$$
z^{\ell(p)}\frac{\det\big(\mathsf{I}+z\mathsf{R}_N(z)\mathsf{B}_p\big)}{\det\big(\mathsf{I}+z\mathsf{R}_N(z)\mathsf{B}_\bullet\big)}=\frac{1}{\lambda^{\ell(p)+1}}\mathsf{deg}^{\mathrm{T}}.\,\mathrm{adj}\left(\mathsf{I}+\mathsf{C}_{N}.\mathsf{B}_p\right)\,.\,\mathsf{1}+o(1),
$$
which yields the result after taking the limits $z\to1/\lambda^-$ and $N\to\infty$ now both clearly well-defined, even when $d=2$. Combinatorially, the divergence curing on 2D lattices effected here comes from relating walk whose last erased loop is a fixed SAP to all closed walks rather than directly to the hikes. The relation between closed walks and hikes is performed separately through Lemma~\ref{WalkstoHikes}.\\

\begin{remark}
Recall that $R(z)$ and $R_p(z)$ are the ordinary generating functions of all closed walks on $G$ and of closed walk whose last erased loop is $p$ on $G$, respectively. 
The finite sieve here indicated that the limit in $z\to1/\lambda^-$ of the ratio of power series $R_p(z)/R(z)$ yields the asymptotic behaviour of the term-by-term ratio $[z^\ell]R_p(z)/[z^\ell]R(z)$ as $\ell\to\infty$, which rather corresponds to an Hadamard division of $R_p(z)$ by $R(z)$, i.e.
$$
\lim_{z\to1/\lambda^-}\frac{R_p(z)}{R(z)}=\lim_{\ell\to\infty }\frac{[z^\ell]R_p(z)}{[z^\ell]R(z)}.
$$
This is a corollary of the fact that the leading divergence of both $R_p(z)$
and $R(z)$ located at $z=1/\lambda$. For a general discussion on the relation between Hadamard products and singularity analysis, we refer to \cite{fill2005}. 
\end{remark}

%\noindent \textbf{Higher dimensional lattices}\\
%The number of closed walks of length $\ell\gg1$on $d$-dimensional lattices growth as $\sim \lambda^\ell \ell^{-d/2}$ so that the logarithmic divergence problem only occurs for $d=2$. Hence, when $d>2$, 
%Unsurprisingly thus
%$$
%\lim_{N\to\infty}\lim_{z\to\lambda^{-1}}\frac{\det\!\big(\mathsf{I}+z\mathsf{R}_N(z)\mathsf{B}_p\big)}{\det\!\big(\mathsf{I}+z\mathsf{R}_N(z)\mathsf{B}_\bullet\big)}=\lim_{N\to\infty}\lim_{z\to\lambda^{-1}}\frac{\det\big(\mathsf{I}+z\mathsf{R}_N(z)\mathsf{B}_p\big)}{-\frac{1}{\pi}\log(1-z\lambda)}
%$$

% This is removed when expanding 
%
%$$
%-\frac{1}{\lambda \pi}\mathsf{P}_{\lambda}=-\frac{1}{\pi N}\mathsf{uv}^\mathrm{T}
%$$
%it comes 
%$$
%\lim_{z\to\lambda^{-1}}\frac{\det\big(\mathsf{I}+z\mathsf{R}_N(z)\mathsf{B}_p\big)}{-\frac{1}{\pi}\log(1-z\lambda)}=\mathsf{v}^{\mathrm{T}}\text{adj}\big(\mathsf{c}\mathsf{B}_p\big)\mathsf{u}=\frac{F_p}{N}
%$$
\end{proof}

This concludes the proof of the Infinite Sieve Theorem. 
\end{proof}

\subsection{Probabilistic interpretations}
The results of the Infinite Sieve Theorem have a probabilistic interpretation which motivated the study of the fraction $F_p/\lambda^{\ell(p)}$ in the literature ultimately leading to SLE$_2$, albeit without the relation to hikes and the explicit form of Eq.~(\ref{FractionWalk}). Here the interpretation of $F_p/\lambda^{\ell(p)}$ as a probability distribution over the SAPs is first manifested in the following:
\begin{equation}\label{SumSAP}
\sum_{p:\,\text{SAP}}\frac{F_p}{\lambda^{\ell(p)}}=1.
\end{equation}
Combinatorially, this trivially states that any walk has a unique right prime factor \cite{SIAM2017} and consequently the total fraction of all closed walks whose last erased loop is \textit{any} SAP is exactly 1.  

\begin{remark}
All \emph{simple cycles} passing through some chosen vertex of $G$ are present in the sum of Eq.~(\ref{SumSAP}). Following the rules in the Cartier-Foata monoid of the hikes, a SAP of length $\ell(p)$ thus appears at least $2\ell(p)$ times in the sum. Here, the factor of 2 accounts for the orientation; and $\ell(p)$ reflects all the valid starting points for the SAP as a closed walk.\footnote{Rigorously, what matters is the size of the equivalence class of all words on edges that define the same hike $h$ in the original Cartier-Foata monoid. As mentioned earlier, this size is the value of the von Mangoldt function $\Lambda(h)$, which is the length of the unique right factor of a walk if $h$ is a walk and 0 otherwise \cite{SIAM2017}. Seen as a closed walk, a SAP $p$ is its own unique right-prime divisor and thus $\Lambda(p)=\ell(p)$.}  For example, on the square lattice, the $1\times 1$ square gets a factor of $8=2\times 4\times 1$ and the $1\times 2$ rectangle has a factor of $12=2\times 6$. Strictly speaking, the sum Eq.~(\ref{SumSAP}) therefore runs over simple cycles with fixed starting point and the SAP index is a (harmless) notational abuse. 
\end{remark}

We can go further in the probabilistic interpretation using purely combinatorial arguments:

\begin{proposition}\label{ProbaMeaning}
Let $G$ be an infinite vertex transitive planar graph and let $p$ be a SAP on it. Let $w$ be a random walk with uniform edge-transition probability $1/\lambda$. Run the walk until it comes back to its starting point. Then the probability $\mathbb{P}(w\mapsto p)$ that the last erased loop of $w$ be $p$ is equal to the fraction of all closed walks (including those passing an arbitrary number of times through the origin) whose last erased loop is $p$, $\mathbb{P}(w\mapsto p)=F_p/\lambda^{\ell(p)}$. 
\end{proposition}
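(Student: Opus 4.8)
The plan is to reduce everything to the classical renewal decomposition of closed walks into first-return excursions and to show that loop-erasure only ``sees'' the final excursion. Fix the origin $\bullet$ and let $E(z):=\sum_{e}z^{\ell(e)}$ be the ordinary generating function of \emph{first-return excursions}, i.e.\ closed walks from $\bullet$ to $\bullet$ that do not visit $\bullet$ in between, and let $E_p(z)$ be the same sum restricted to excursions whose last erased loop is $p$. Every closed walk $w$ from $\bullet$ factors uniquely as a concatenation $w=e_1\cdots e_k$ of first-return excursions, which immediately gives the renewal identity $R(z)=1/\bigl(1-E(z)\bigr)$ and, once the key lemma below is established, $R_p(z)=R(z)\,E_p(z)$.

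The crux is the claim that the last erased loop of a closed walk $w=e_1\cdots e_k$ equals the last erased loop of its final excursion $e_k$. I would prove this in the heap-of-cycles picture, where the last erased loop of a walk is its unique maximal (top) piece. Writing $w=u\,e_k$ with $u=e_1\cdots e_{k-1}$, the excursion $e_k$ is stacked chronologically above $u$, so no piece of $u$ lies above any piece of $e_k$; in particular the top piece $t$ of the pyramid $e_k$ is not covered by anything in $w$ and is therefore maximal in $w$. But $w$ is a closed walk, hence a pyramid with a \emph{unique} maximal piece, so that piece must be $t$. Thus a closed walk has last erased loop $p$ exactly when its final excursion does, which is the bijection underlying $R_p(z)=R(z)\,E_p(z)$: an arbitrary prefix $u$ (counted by $R$) followed by an excursion whose last erased loop is $p$ (counted by $E_p$).

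It then follows from the last Lemma of the Infinite Sieve Theorem and its accompanying Remark that
$$
\frac{F_p}{\lambda^{\ell(p)}}=\lim_{z\to1/\lambda^-}\frac{R_p(z)}{R(z)}=\lim_{z\to1/\lambda^-}E_p(z)=E_p(1/\lambda).
$$
On the probabilistic side, a simple random walk with uniform transition $1/\lambda$ realises a given first-return excursion $e$ with probability $\lambda^{-\ell(e)}$, so running the walk until its first return to $\bullet$ assigns to the event ``last erased loop is $p$'' the mass $E_p(1/\lambda)/E(1/\lambda)$. It remains to show $E(1/\lambda)=1$, i.e.\ that the walk returns almost surely. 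This is exactly where planarity enters: on a $2$-dimensional vertex-transitive lattice the number of closed walks of length $\ell$ behaves as $\lambda^{\ell}\ell^{-d/2}=\lambda^{\ell}/\ell$ (the $\mathrm{Li}_{d/2}$ behaviour already exploited in the proof of the Infinite Sieve Theorem), whence $R(1/\lambda)=\sum_{\ell}[z^\ell]R(z)\,\lambda^{-\ell}=\infty$; the renewal identity $R=1/(1-E)$ then forces $E(1/\lambda)=1$. Hence $\mathbb{P}(w\mapsto p)=E_p(1/\lambda)=F_p/\lambda^{\ell(p)}$, as claimed.

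The main obstacle is the heap lemma of the second paragraph---pinning down that loop-erasure of a multiply-returning closed walk depends only on its last excursion. Everything else is renewal bookkeeping together with the recurrence normalisation $E(1/\lambda)=1$, which is supplied for free by the very same $d=2$ divergence of $R(z)$ that pervades the rest of the paper.
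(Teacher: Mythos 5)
Your proof is correct and follows essentially the same route as the paper: both arguments reduce the probability to the generating function of first-return excursions whose last erased loop is $p$, identify it with the ratio $R_p(z)/R(z)$ (in the paper, the determinant ratio $z^{\ell(p)}\det(\mathsf{I}-z\mathsf{A}_{G_N\backslash p})/\det(\mathsf{I}-z\mathsf{A}_{G_N\backslash\bullet})$), and evaluate the limit $z\to1/\lambda^-$ via the lemmas already established for the Infinite Sieve Theorem. Your write-up is in fact more explicit on two points the paper asserts without proof: the heap lemma showing that the top piece of the final excursion is the unique maximal piece of the whole pyramid (which is what justifies $R_p=R\cdot E_p$, i.e.\ the paper's formula for $W_{N,p}$), and the normalisation $E(1/\lambda)=1$ via the divergence of $R(z)$ at $z=1/\lambda$, which is precisely where the planarity (recurrence) hypothesis is used.
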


\begin{proof}
Let $G_N$ be a family of finite vertex-transitive graphs of degree $\lambda$ converging to $G$ as $N\to\infty$. Let $W_{N, p}(z)$ be the ordinary generating function of closed walks on $G_N$ with right prime divisor $p$ and such that these walks never revisit their starting point except on their final step.  By construction, we have $\mathbb{P}(w\mapsto p)=\lim_{z\to1/\lambda^-}\lim_{N\to\infty} W_{N,p}(z)$ provided both limits are well defined. But 
\begin{align*}
W_{N, p}(z) &= z^{\ell(p)}\frac{\det\big(\mathsf{I}-z\mathsf{A}_{G_N\backslash p}\big)}{\det\big(\mathsf{I}-z\mathsf{A}_{G_N\backslash \bullet}\big)},\\
&=z^{\ell(p)}\Big(\zeta_N(z)\det\big(\mathsf{I}-z\mathsf{A}_{G_N\backslash p}\big)\Big)\Big(\zeta_N(z)\det\big(\mathsf{I}-z\mathsf{A}_{G_N\backslash \bullet}\big)\Big)^{-1}.
\end{align*}
This has a well defined limit when $N\to\infty$ as per the extension of Viennot's lemma to infinite graphs
\begin{align*}
W_p(z):=\lim_{N\to\infty}W_{N, p}(z) =z^{\ell(p)}\frac{\det\big(\mathsf{I}+z\mathsf{R}(z)\mathsf{B}_p\big)}{\det\big(\mathsf{I}+z\mathsf{R}(z)\mathsf{B}_\bullet\big)},
\end{align*}
and by the Infinite Sieve Theorem the limit $z\to1/\lambda^-$ of the above is also well defined. It is given by $\lim_{z\to1/\lambda^-}W_p(z)=F_p/\lambda^{\ell(p)}$ with $F_p$ as per Eq.~(\ref{FractionWalk}).
\end{proof}

The logarithm of $F_p$ also has a probabilistic interpretation that has not appeared in the literature so far. It is based on results by Espinasse and Rochet \cite{Espinasse2019}:

\begin{proposition}[Espinasse and Rochet, 2019]\label{EspinasseRochet}
Let $G$ be an infinite vertex-transitive graph and let $p$ be a SAP on it. Let $\mathbb{E}_w(.)$ designate the expectation value of a random variable with respect to the closed random walks defined up to translation and weighted with probability $\lambda^{-\ell(w)}$. 
Then
$$
\log(F_p) = \sum_{w:\,\text{walk}} \frac{\Lambda_p(w)}{\ell_p(w)}\lambda^{-\ell(w)} = \mathbb{E}_w\left(\frac{\Lambda_p(w)}{\ell_p(w)}\right).
$$
Here $\Lambda_p(w)$ counts the vertices that are both in $p$ and the unique right divisor of $w$ and $\ell_p(w)$ is the
number of vertices of $p$ visited by $w$, counted with multiplicity.
\end{proposition}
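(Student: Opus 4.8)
The plan is to reduce $\log F_p$ to the logarithm of a regularised ratio of zeta functions and then expand that logarithm over walks; the point is that the localised weight $\Lambda_p/\ell_p$ is exactly what the walk von Mangoldt function produces once the walk sum is restricted to the neighbourhood of $p$. First I would use the Infinite Sieve Theorem together with the extension of Viennot's lemma to rewrite the fraction of closed walks whose last erased loop is $p$ as a ratio of regularised characteristic polynomials on the small tori: combining $\det(\mathsf{I}+z\mathsf{R}_N(z)\mathsf{B}_p)=\zeta_N(z)\det(\mathsf{I}-z\mathsf{A}_{G^{\text{Tor}}_N\backslash p})$ with the expression for $F_p/\lambda^{\ell(p)}$ obtained in the proof of the Infinite Sieve Theorem gives, in the limit, $F_p=\lim_{N\to\infty}\det(\mathsf{I}-\tfrac1\lambda\mathsf{A}_{G^{\text{Tor}}_N\backslash p})\big/\det(\mathsf{I}-\tfrac1\lambda\mathsf{A}_{G^{\text{Tor}}_N\backslash \bullet})$, where $\bullet$ is any single vertex. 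On $d=2$ lattices each determinant vanishes like $\alpha^N/N$ by Lemma~\ref{WalkstoHikes}, but the ratio stays finite, so $\log F_p$ is the difference of two individually divergent yet jointly convergent terms.

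Next I would take logarithms and invoke the trace--log expansion $\log\det(\mathsf{I}-\tfrac1\lambda\mathsf{A}_{H})=-\sum_{k\ge 1}\tfrac{\lambda^{-k}}{k}\mathrm{Tr}(\mathsf{A}_H^{\,k})$, reading $\mathrm{Tr}(\mathsf{A}_H^{\,k})$ as the number of rooted closed walks of length $k$ on $H$. Grouping these rooted closed walks by the hike they represent and using the walk von Mangoldt identity $\Lambda(z)=\mathrm{Tr}\,\mathsf{R}(z)-N$ --- equivalently $\log\zeta_H(z)=\sum_{w\,\text{walk on }H}\tfrac{\Lambda(w)}{\ell(w)}z^{\ell(w)}$, the semi-commutative analogue of the classical logarithmic formula $\log\zeta(s)=\sum_{k\ge1}\tfrac1k\sum_{\mathfrak{p}}\mathfrak{p}^{-ks}$ \cite{SIAM2017} --- turns each $\log\det$ into a sum over walks weighted by $\Lambda(w)/\ell(w)=\ell(q)/\ell(w)$, with $q$ the right prime divisor (last erased loop) of $w$. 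Because walks on $G\backslash p$ are exactly the walks avoiding $\mathcal{V}(p)$ and $\mathcal{V}(p)\supseteq\{\bullet\}$, the difference of the two expansions cancels every walk that never meets $p$ and leaves a convergent sum over walks that do meet $p$.

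The last and decisive step, which is the content of the result of Espinasse and Rochet \cite{Espinasse2019}, is to show that this localised sum --- carrying the global weight $\Lambda(w)/\ell(w)$ and an awkward ``avoids $\bullet$'' constraint --- collapses to the $p$-intrinsic sum $\sum_{w}\tfrac{\Lambda_p(w)}{\ell_p(w)}\lambda^{-\ell(w)}$. The mechanism I would use is vertex-transitivity: since $\det(\mathsf{I}-\tfrac1\lambda\mathsf{A}_{G\backslash\bullet})$ does not depend on the chosen vertex, I may symmetrise the denominator over all $\bullet\in\mathcal{V}(p)$, which replaces the total length $\ell(w)$ and the total right-divisor length $\Lambda(w)$ by their restrictions to $\mathcal{V}(p)$ --- namely the local time $\ell_p(w)$ of $w$ on $p$ and the overlap $\Lambda_p(w)=|\mathcal{V}(p)\cap\mathcal{V}(q)|$ of the last erased loop with $p$. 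Establishing this weight replacement rigorously is the main obstacle: it is a combinatorial Kac / local-time identity that trades a difference of extensive quantities for an intensive local functional, and it must be carried out while controlling the joint convergence at the critical point $z\to1/\lambda^{-}$ --- delicate precisely on two-dimensional lattices, where the polylogarithmic divergence encountered in the proof of the Infinite Sieve Theorem reappears and cancels only in the ratio. Once the weights are matched, recognising the resulting sum as the expectation $\mathbb{E}_w\!\big(\Lambda_p(w)/\ell_p(w)\big)$ over closed walks weighted by $\lambda^{-\ell(w)}$ is immediate.
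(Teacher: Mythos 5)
The first thing to note is that the paper does not prove Proposition~\ref{EspinasseRochet} at all: it is imported from Espinasse and Rochet \cite{Espinasse2019}, so there is no internal proof to compare yours against. Judged on its own terms, your set-up is sound and consistent with the paper's machinery: the identity $F_p=\lim_{N}\det(\mathsf{I}-\tfrac1\lambda\mathsf{A}_{G^{\text{Tor}}_N\backslash p})\big/\det(\mathsf{I}-\tfrac1\lambda\mathsf{A}_{G^{\text{Tor}}_N\backslash\bullet})$ does follow from the Infinite Sieve Theorem together with the rewriting used in its proof, and the trace--log expansion correctly turns $\log F_p$ into $\sum_{h}\tfrac{\Lambda(h)}{\ell(h)}\lambda^{-\ell(h)}$ summed over walks that avoid $\bullet$ but meet $\mathcal{V}(p)\setminus\{\bullet\}$, with the divergences cancelling in the difference of the two logarithms.

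The gap is exactly where you locate it, and the mechanism you propose to close it demonstrably fails. Averaging your identity over the choices $\bullet\in\mathcal{V}(p)$ assigns to each walk $h$ meeting $p$ the weight $\tfrac{\Lambda(h)}{\ell(h)}\cdot\tfrac{|\mathcal{V}(p)\setminus\mathcal{V}(h)|}{\ell(p)}$, i.e.\ each walk is counted once per vertex of $p$ that it \emph{avoids}. In particular any walk visiting all of $\mathcal{V}(p)$ --- for instance $h=p$ itself, for which $\Lambda_p(p)/\ell_p(p)=1$ --- receives weight zero, so the symmetrised sum is another valid representation of $\log F_p$ but not the one claimed. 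The $1/\ell_p(w)$ weight cannot be produced by moving the deleted base point; it must come from re-rooting each walk at its successive visits to $\mathcal{V}(p)$, that is, from Jacobi's identity $\det(\mathsf{I}-z\mathsf{A}_{G\backslash p})/\det(\mathsf{I}-z\mathsf{A}_{G})=\det\bigl(\mathsf{R}(z)_{\mathcal{V}(p)\times\mathcal{V}(p)}\bigr)^{-1}$ followed by an excursion decomposition of closed walks through the induced process on $\mathcal{V}(p)$, which is what replaces the over-counting factor $\Lambda(h)$ by its restriction $\Lambda_p(h)$ and the period $\ell(h)$ by the local time $\ell_p(h)$. That local-time bookkeeping, together with control of the two limits $N\to\infty$ and $z\to1/\lambda^-$ for the resulting series, is the actual content of the Espinasse--Rochet result and is the step your proposal leaves unproven.
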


The reason for the notation $\Lambda_p(w)$ is that this function is similar to the von Mangoldt function $\Lambda$ defined on hikes \cite{SIAM2017}. Defining $\Lambda_p(h)$ to be 0 when the hike $h$ is not a walk and otherwise  $\Lambda_p(h)$ is as in the Proposition above, we have $\Lambda_p(h)=\Lambda(h)$ when $p$ is the unique right prime divisor of $h$.\\ 

The combination of Propositions~\ref{ProbaMeaning} and \ref{EspinasseRochet} implies the rather uncommon result that the entropy of the distribution of $F_p \lambda^{-\ell(p)}$ values itself has a probabilistic interpretation. Consequences of this observation
%, in particular in network analysis, 
will be explored in a separate work.
%:
%\begin{definition}
%For an infinite lattice $G$, we define the entropy $S$ of the distribution of $F_p\lambda^{-\ell(p)}$ values over all its SAPs defined up to translation as 
%$$
%S=-\sum_{p:\,\text{SAP}}\frac{F_p}{\lambda^{\ell(p)}}\log(F_p\,\lambda^{-\ell(p)})
%$$
%\end{definition} 
%Then $-S$ is the expectation with respect to all SAPs obeyed by loop-erasing of the expectation with respect to all walks of the overlap between a SAP and the right divisor of a walk divided by the overlap between the SAP and the walk. 
%
%\begin{proof}[Proof 2]
%Let $w$ be a closed walk on $G$ from $\bullet$ to itself. The walk decomposes uniquely as the concatenation $w=w_1w_2\cdots w_n$ of walks $w_i$ from $\bullet$ to itself and which otherwise do not visit this vertex. Then the last erased loop of $w$ is that of $w_n$. But the $w_i$ are independent random walks, so that $\mathbb{P}(w_n\mapsto p)$ FINISH
%\end{proof}
%
~\\[-.5em]

%\subsection{Further illustrations on the square lattice}

\section{Discussion}\label{Discussion}
\subsection{Extension to SAWs and further lattices}
In this contribution, we presented fully deterministic combinatorial arguments based on number-theoretic sieves for counting walk multiples of SAPs on any finite or infinite vertex-transitive graphs. This is equivalent to counting all the walks whose last erased loop following Lawler's loop erasing procedure is some chosen self-avoiding polygon $p$ on such graphs. In fact, \text{all} the results presented here are immediately valid for self-avoiding walks (SAWs) as well. In particular, Eq.~(\ref{FractionWalk}) of the Infinite Sieve Theorem giving the fraction $F_p$ is immediately correct. For a self-avoiding walk $p$, $F_p/\lambda^{\ell(p)}$ is the fraction of all open walks with the same fixed starting and end points for which $p$ is the self-avoiding skeleton remaining after loop-erasing. 

Finally, the arguments presented here should extend without fundamental changes to infinite graphs that are not vertex transitive as long as $G/\text{Aut}(G)$ is finite. This nonetheless requires further work and is beyond the scope of the present contribution.

\subsection{Counting the self-avoiding polygons}\label{UnivDiscussion}
We recall that $R(z)$ and $R_p(z)$ are the ordinary generating functions of closed walks and of walks whose last erased loop is a SAP $p$, respectively. 

The research presented here suggests a natural strategy to tackle the open problem of asymptotically counting SAWs and SAPs. First, observe that we know the exact number $R(z)[L]$ of closed walks of length $L$ defined up to translation. Then, if we could determine the exact number $R_p(z)[L]$  of closed walks of length exactly $L$ whose last erased loop is $p$, it would be sufficient to sum this over all SAPs of length strictly less than $L$ and subtract the result from $R(z)[L]$ to determine the number $\pi(L)$ of SAPs of length exactly $L$: 
\begin{equation*}
\pi(L)=R(z)[L]-\sum_{p:\,\text{SAP}\atop \ell(p)<L}R_p(z)[L].
\end{equation*}
%or more conveniently,
While such a precise count is not feasible in practice as $L\to\infty$, an asymptotic estimate of the number of walks whose last erased loop is any chosen SAP may seem, at first, to be sufficient to gain an insight into the number of SAPs themselves. Following this idea, we would rather write
\begin{equation}\label{PiEstimate}
\frac{\pi(L)}{R(z)[L]}=1-\sum_{p:\,\text{SAP}\atop \ell(p)<L}\frac{R_p(z)[L]}{R(z)[L]}.
\end{equation}
and use $R_p(z)[L]/R(z)[L]\sim F_p \lambda^{-\ell(p)}$ for $L\gg 1$. Thus, we would only need to estimate sums like
$$
S(L):=\sum_{p:\text{ SAP}\atop \ell(p) \leq L}\frac{F_p}{\lambda^{\ell(p)}},
$$
for $L\gg 1$, in order to work out an asymptotic expansion for $\pi(L)$. Such an estimate can already be determined from R. Kenyon's seminal results \cite{Kenyon2000}, we find
\begin{equation}\label{LSL}
S(L)=1-L^{-3/5}+O(L^{-3/5}).
\end{equation}
\begin{figure}[t!]
\includegraphics[width=1\textwidth]{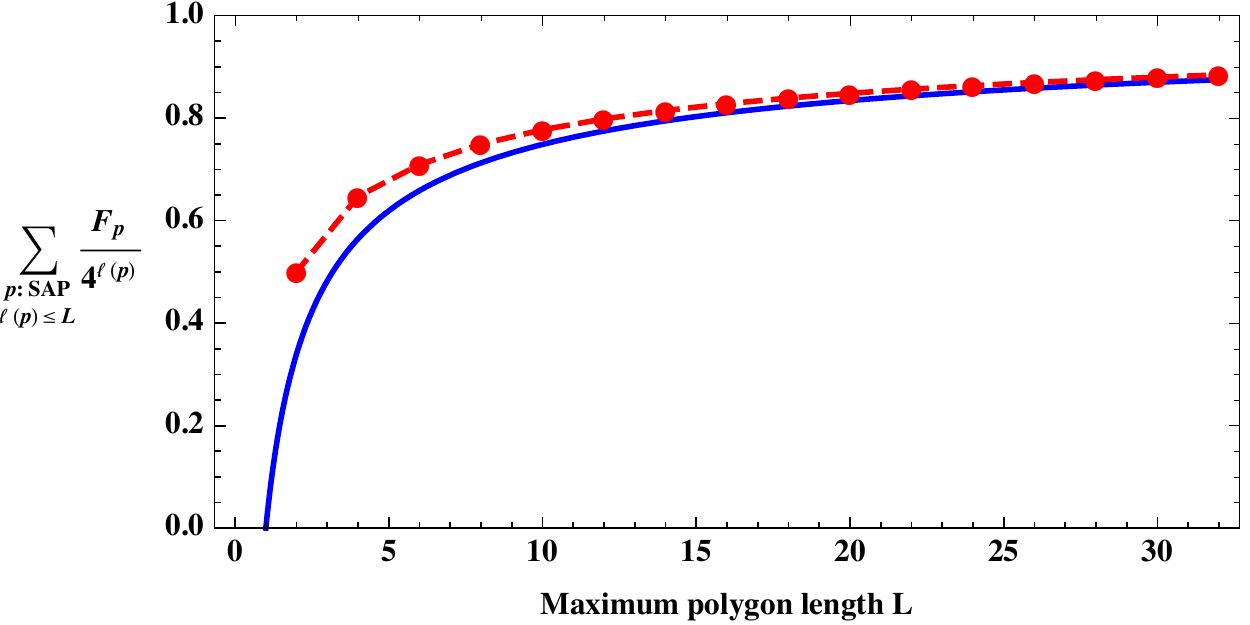}
 \caption{\footnotesize \label{FSeries} In red points and dashed red line, the sum over all self-avoiding polygons with fixed starting point on the square lattice of length at most $L$ of the fraction $F_p/4^{\ell(p)}$, as a function of $L$. The corresponding data table is presented in Appendix~\ref{Data}. The solid blue line is the first term of asymptotic expansion for this quantity, that is $1-L^{-3/5}$. The discrepancy between $-3/5$ and the $-1/2$ expected from SAP counting is due to an accumulation of error terms in the Infinite Sieve Theorem, which require us to include terms beyond the dominant $F_p/4^{\ell(p)}$ or use stronger two-sided sieves.}
\end{figure}
See also Fig~\ref{FSeries} for a numerical illustration.
This result of course wildly differs from the $(\mu/\lambda)^L L^{-1/2}$ expected here from the numerically conjectured scaling for $\pi(L)$.\footnote{The correction term is $L^{-1/2}$ here because we count simple cycles rather than SAPs. This is responsible for a factor of $L$ in front of the $L^{-5/2}$. Since in addition, $R(z)[L]\sim \lambda^L/(\pi L)$ for $L\gg1$, this accounts for another factor of $L$ and finally we get $L\times L\times L^{-5/2}=L^{-1/2}$.}
From the point of view of probability theory, the origin of this discrepancy is clear: the law governing Lawler's loop erased random walks essentially converges to $\text{SLE}_2$ rather than the conjectured $\text{SLE}_{8/3}$ for SAP and SAW models. From the point of view of sieve techniques however, the chasm between these results originates from an uncontrolled accumulation of error terms affecting the estimate $R_p(z)[L]/R(z)[L]\sim F_p \lambda^{-\ell(p)}$.\footnote{These errors have the same origin as those affecting the  Eratosthenes-Legendre sieve in number theory!} 

It is important to recall that $F_p \lambda^{-\ell(p)}$ is only the first, asymptotically dominant term of the asymptotic expansion of the number of walks whose last erased loop is $p$. In particular $F_p \lambda^{-\ell(p)}$ is a good approximation to $R_p(z)[L]/R(z)[L]$ only when $L\gg \ell(p)$ (see Appendix~\ref{ErrorTerms}). Yet, when we subtract walks with fixed last erased loops from all closed walks of length $L$, we must consider the walks whose last erased loop $p$ is of length up to $\ell(p)=L-1$. 
Given the exponential growth in the number of SAPs, this means that \emph{most} of our estimates are affected by large, uncontrolled errors, and it is impossible to exploit Eq.~(\ref{PiEstimate}) using solely $S(L)$.\\[-.5em]

%\subsection{A heuristic series for connective constants}
This problem has two potential solutions. The first idea is  to take into account some error terms $\text{Err}_p(L)$  in the asymptotic expansion of $R_p(z)[L]/R(z)[L]$ so as to determine this quantity more precisely. Since all the error terms are exactly available\footnote{In fact $R_p(z)[L]$ itself is in principle exactly available from the extention of Viennot's Lemma to infinite graphs. In this situation however, it is a precise estimate for the sum over SAPs of $R_p(z)[L]$ which is utterly lacking.}, it seems possible that an extension to Kenyon's arguments would allow us to estimate sums of such error terms generically; just as Eq.~(\ref{LSL}) does for the dominant term. This idea suffers from a major drawback: error terms actually \emph{grow} with $L$ if we consider classes of SAPs for which $L-\ell(p)$ is fixed. Since most SAPs of length up to $L-1$ are close in length to $L$, this means that the overall error term affecting Eq.~(\ref{PiEstimate}) grows uncontrollably with $L$. Thus, an increasingly (and unrealistically) detailed knowledge of the errors is needed as $L\to\infty$, so that this strategy collapses completely with respect to rigorous arguments. 

\begin{comment}
\textit{Bizarrely,} it does work heuristically nonetheless. Given the results of Appendix~\ref{ErrorTerms} we calculated the error terms for small $L$ values by explicitly evaluating
$$
\text{Err}(L):=\sum_{p:\text{ SAP}\atop \ell(p) \leq L}\text{Err}_p(L).
$$
Once inserted in Eq.~(\ref{PiEstimate}) together with analytically calculated results for $S(L)$ thanks to the Infinite Sieve Theorem, this provide an asymptotic series expansion for $\mu$ on all vertex transitive lattices. On the square lattice it is:
\begin{table}
\centering
\renewcommand{\arraystretch}{1.2}
\begin{tabular}{|c|c|c|c|c|c|c|c|}
\hline
\hline
\bf Length $L$ &2&4&6&8&10&12&14\\
\hline 
  $S(L)$  &0.5& 0.6473& 0.7093& 0.7493& 0.7774& 0.7984& 0.8149\\
\hline
 $\text{Err}(L)$ &1.00&2.00&4.00&8.00&16.10&32.95&\\
\hline
 $\mu_{\text{est.}}$ &4&2.8284&2.6686&2.6430&2.6397&&\\
 \hline
 $|\mu-\mu_{\text{est.}}| (\%)$ &51.62&7.213&1.1557&0.1847&0.0594&&\\
\hline
\hline
\end{tabular}
\caption{\label{Table:data}Table of estimated values for the connective constant of the square lattice using sieves.}
\end{table}
\end{comment}

\subsection{The path to rigorous progress}
The second approach relies on a crucial foundational work by M. Bousquet-M\'elou regarding the enumeration of heaps of pieces satisfying both left and right constraints \cite{bousquet-melou1993,bousquet-melou1991}. This work opens the way for two-sided sieves in the same manner as Viennot's Lemma relates to the Finite and Infinite Sieve Theorems: they give control over both the left and right prime divisors of a walk. Consequently, the maximum length of the primes to be considered in Eq.~(\ref{PiEstimate}) is reduced to only $L/2$. The ``sieving gap'' between $L$ and $L/2$ dramatically reduces the importance of the error terms to the extend that, in accordance with Appendix~\ref{ErrorTerms}, we expect them all to vanish under the limit $L\to\infty$.\footnote{Seeing heaps of pieces as an extension of number theory as in \cite{SIAM2017}, shows that this $L/2$ is the extension of the $\sqrt{x}$ gap present in all standard number-theoretic sieves. We can similarly show that the fraction $F_p \lambda^{-\ell(p)}$ extends the quantity $\log(x)/\log(p)$ and all identities given here extend (and hence reduce to) valid number-theoretic identities. Non-trivial (novel) results on partial sums of the M\"obius function also follow heuristically.} At the same time, the dominant contribution is not $F_p/\lambda^{\ell(p)}$ anymore. The resulting calculations will be presented in a separate work.~\\

\begin{appendices}
\section{Converging sequences of graphs}\label{GraphSeq}
We here recall the notion of a converging sequence of graphs. We follow directly the work of \cite{Benjamini2001}:\\[-.5em] 

Let $G$ and $G'$ be two bounded degree graphs and let $r$ and $r'$ be vertices called the roots on $G$ and $G'$, respectively. A topology on the space $\mathcal{X}$ of isomorphism classes of rooted connected graphs is induced by the following metric. Let $B_G(r,n)$, $n\in\mathbb{N}^*$ be the ball of radius $n$ centred on the root $r$ in graph $G$. Let $k$ be the supremum of all $n$ such that  $(B_G(r,n),r)$ and $(B_{G'}(r',n),r')$ be isomorphic as rooted graphs and define the distance $d$ between $(G,r)$ and $(G',r')$ as $2^{-k}$. Then $d$ is a metric on $\mathcal{X}$. 
Now we say that a sequence $(G_N, r_N)$ of rooted graphs converges to a rooted graph $(G,r)$ if and only if $\lim_{N\to\infty}d\big((G_N, r_N), (G, r)\big)=2^{-\infty}:=0$.\\[-.5em] 

In this work we specifically deal with two sequences converging to infinite vertex-transitive graphs (termed \emph{lattices}):
\begin{itemize}
\item[i)] Small graph sequence: where $G_{n}$ is a finite cut-out of the lattice $G$ that includes $B_G(r,n)$ but not $B_G(r,n+1)$.
\end{itemize}
In the context of planar lattices we construct a specific small graph sequence as follows. Define $G^{\text{Sq}}_{N}$ the induced subgraph of $G$ which is the cut-out of $G$ contained within the square of side length $n\in\mathbb{N}^*$ centred on $r$ (edges being given a length of 1). Here $N$ stands for the number of vertices of this square cut-out, the precise relation between $n$ and $N$ depending on the underlying lattice $G$. We can now also define: 
\begin{itemize}
\item[ii)] Small torus sequence: where the small graphs $G^{\text{Sq}}_{N}$ are wrapped around a torus. The resulting graphs $G^{\text{Tor}}_N$ are vertex-transitive tori with the same degree as $G$.
\end{itemize}
Since all tori $G^{\text{Tor}}_N$ as well as $G$ are vertex-transitive, the roots $r_N$ and $r$ are irrelevant when considering convergence of this sequence. We therefore simply say that the sequence of graphs $G^{\text{Tor}}_N$ converges to the lattice $G$.\\[-.5em]

\begin{figure}
\begin{center}
\includegraphics[width=1\textwidth]{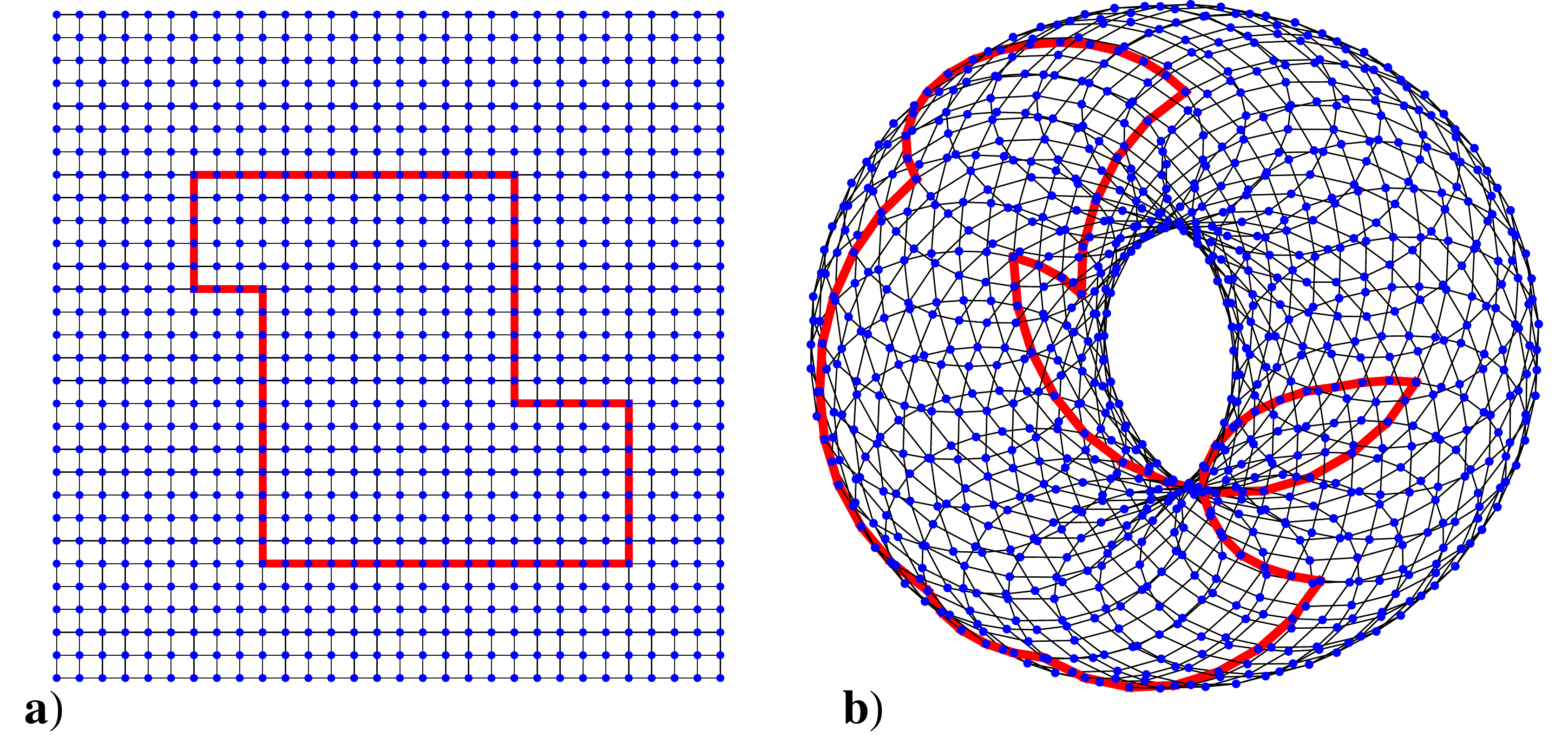}
  \end{center}
  \vspace{-4.5mm}
 \caption{\label{GraphSeqFIG}{\footnotesize Examples of finite graphs part of the sequences converging to the (infinite) square lattice: \textbf{a)} small graph $G^{\text{Sq}}_{30}$ with a self-avoiding polygon on it; \textbf{b)} corresponding small torus $G^{\text{Tor}}_{30}$ with the same self-avoiding polygon.}}
  \vspace{-2mm}
\end{figure}
The small torus sequence generalises to non-planar lattices by wrapping around genus $g>2$ tori the small graphs $G^{\text{Hyp}}_{N}$ defined as the induced subgraph of $G$ which is the cut-out of $G$ contained within the $d$-hypercube of side length $n\in\mathbb{N}^*$ centred on $r$. In this case, the dimension $d>2$ is the smallest dimension of the space $\mathbb{R}^d$ such that $G$ can be embedded in $\mathbb{R}^d$ in a way that no two edges cross.

\section{Viennot's lemma on infinite graphs with formal series}\label{ViennotFormal}
The proof is entirely similar to that given in the case of ordinary generating functions Proposition~\ref{InfiniteViennot}. We have:
\begin{proposition}\label{InfiniteViennotFormal}
Let $G$ be an infinite (di)graph with bounded degree with labelled adjacency matrix $\mathsf{W}$. Let $p$ be a prime on $G$, with support $\mathcal{V}(p)$ and neighborhood $\mathcal{N}(p)$. Let $G_p\prec G$ be the induced subgraph of $G$ with vertex set $\mathcal{V}(G_p)=\mathcal{V}(p)\cup\mathcal{N}(p)$ and $\mathsf{W}_p$ its labelled adjacency matrix. Let $\mathsf{R}=\big(\mathsf{I}-\mathsf{W}\big)^{-1}\big|_{G_p}$ be the restriction of the formal resolvent of $G$ to $G_p$.  

Then the formal series of closed walks whose last erased loop is $p$ is given by
\begin{equation}\label{ViennotLemmaInfiniteFormal}
%\lim_{N\to\infty}\frac{\det(\mathsf{I}-z\mathsf{A}_{G_N\backslash p})}{\det(\mathsf{I}-z\mathsf{A}_{G_N})}
\sum_{w~\text{walk}\atop p|_r w} w=\det\!\big(\mathsf{I}+\mathsf{R}\,\mathsf{W}_p\big)p.
\end{equation}
\end{proposition}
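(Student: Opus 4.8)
The plan is to mirror the proof of Proposition~\ref{InfiniteViennot} verbatim, lifting it from scalar ordinary generating functions to formal Cartier--Foata series on hikes. First I would fix the small torus sequence $\{G^{\text{Tor}}_N\}_{N\in\mathbb{N}}$ of Appendix~\ref{GraphSeq} and, for a prime $p$ of finite support, choose $N(p)$ so that $G_p\prec G^{\text{Tor}}_N$ for all $N\geq N(p)$. On each finite torus the formal version of Viennot's lemma applies directly: writing $\mathsf{W}_{N,p}:=\mathsf{W}_{G^{\text{Tor}}_N}-\mathsf{W}_{G^{\text{Tor}}_N\backslash p}$ for the labelled adjacency matrix of the edges incident to $p$ and $\mathsf{R}_N:=(\mathsf{I}-\mathsf{W}_{G^{\text{Tor}}_N})^{-1}$ for the formal resolvent, the series of closed walks on $G^{\text{Tor}}_N$ whose last erased loop is $p$ is
$$
\frac{\det\!\big(\mathsf{I}-\mathsf{W}_{G^{\text{Tor}}_N\backslash p}\big)}{\det\!\big(\mathsf{I}-\mathsf{W}_{G^{\text{Tor}}_N}\big)}\,p.
$$

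Next I would substitute $\mathsf{W}_{G^{\text{Tor}}_N\backslash p}=\mathsf{W}_{G^{\text{Tor}}_N}-\mathsf{W}_{N,p}$ in the numerator and factor out $(\mathsf{I}-\mathsf{W}_{G^{\text{Tor}}_N})$, exactly as in the scalar proof, to collapse the quotient into the single determinant $\det(\mathsf{I}+\mathsf{R}_N\mathsf{W}_{N,p})$. These are the Cartier--Foata analogues of the manipulations in the proof of Proposition~\ref{InfiniteViennot}; they are legitimate in the partially commutative algebra because the finite Viennot's lemma is already stated for labelled matrices and rests on the heaps-of-pieces inversion lemma, so no new algebraic identity is introduced. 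Since $\mathsf{W}_{N,p}$ is supported on $\mathcal{V}(p)\cup\mathcal{N}(p)$, the determinant collapses to that of the finite $|\mathcal{V}(p)\cup\mathcal{N}(p)|$-dimensional block $(\mathsf{I}+\mathsf{R}_N\mathsf{W}_{N,p})\big|_{G_p}$, and $\mathsf{W}_{N,p}\big|_{G_p}=\mathsf{W}_p$ for all $N\geq N(p)$.

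The remaining---and only genuinely infinite---step is the limit $N\to\infty$, which I would take coefficient-wise in the length grading. The $(i,j)$ entry of $\mathsf{R}_N\big|_{G_p}$ is the formal series of walks from $i$ to $j$ on $G^{\text{Tor}}_N$, and since $G$ has bounded degree while $G^{\text{Tor}}_{N=n^2}$ contains the ball of radius $\sim n/2$ about $p$, every walk contributing to a fixed-length coefficient of $\mathsf{R}_N\big|_{G_p}$ stabilises for $N$ large and coincides with the corresponding walk on $G$. Hence $\mathsf{R}_N\big|_{G_p}\to\mathsf{R}$ term by term, each coefficient of $\det(\mathsf{I}+\mathsf{R}_N\mathsf{W}_{N,p})$ is eventually constant in $N$, and the limit is the well-defined formal series $\det(\mathsf{I}+\mathsf{R}\,\mathsf{W}_p)$; multiplying on the right by $p$ yields Eq.~(\ref{ViennotLemmaInfiniteFormal}). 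The main obstacle is precisely to make this formal limit rigorous: one must confirm that only finitely many hikes of each length contribute to $\det(\mathsf{I}+\mathsf{R}\,\mathsf{W}_p)$, so that it is a bona fide element of the completed Cartier--Foata algebra, and that wrapping the small graphs onto a torus introduces no spurious short closed walks among the vertices of $G_p$. Both facts follow from bounded degree together with the ball-containment property of the small torus sequence, but this is the one place where the infinite setting genuinely departs from the finite one and where the argument must be pinned down.
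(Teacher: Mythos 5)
Your proposal follows exactly the route the paper takes: the paper's entire proof of Proposition~\ref{InfiniteViennotFormal} is the one-line remark that it is ``entirely similar'' to that of Proposition~\ref{InfiniteViennot}, and your argument is precisely that transposition --- finite-torus Viennot's lemma, algebraic collapse of the quotient of determinants into the single finite block supported on $G_p$, then a coefficient-wise limit in $N$. The extra care you take over the stabilisation of fixed-length coefficients and the absence of spurious short walks introduced by the torus wrapping goes beyond what the paper records but is consistent with its argument.
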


\section{Recursion relations for $\mathsf{C}$ and $\mathsf{R}(z)$}\label{Recursion}
Recursion relations between entries of the resolvent naturally arise from the basic observations that walks of length $\ell$ going from vertex $v_i$ to vertex $v_j$ were walks of length $\ell-1$ from $v_i$ to a neighbours of $v_j$. We have a similar relation relating walks of length $\ell$ with those of length $\ell-2$. In terms of generating functions, these read
\begin{align*}
&\lambda z\mathsf{R}(z)_{1}=R(z)-1,\\
&z^2\sum_{i} n_{2,i} \mathsf{R}(z)_{2,i}+\lambda z^2 R(z)=R(z)-1
\end{align*}
where $R(z)$, $\mathsf{R}(z)_{1}$ and $\mathsf{R}(z)_{2,i}$ designate a diagonal entry of the resolvent, the entry of the resolvent relating first neighbours, and entries of the resolvent relating second neighbours, respectively. The quantity $n_{2,i}$ is the number of times $\mathsf{R}(z)_{2,i}$ appears. Similar recursion relations are thus obeyed by the entries of $\mathsf{C}$
$$
c_1=c_0-1,~~\frac{1}{\lambda} c_0+\sum_{i=1}^{\lfloor \lambda /2\rfloor} n_{2,i}c_{2,i}=\lambda^2(c_0-1).
$$
where $c_0$, $c_1$ and $c_{2,i}$ are defined similarly to $R(z)$, $\mathsf{R}(z)_{1}$ and $\mathsf{R}(z)_{2,i}$ respectively.\\ 

These recursion relations simplify further on noting that $c_0=0$:
\begin{lemma}
Let $G$ be an infinite vertex transitive graph with  bounded degree $\lambda$ and resolvent $\mathsf{R}(z)$. Let $\mathsf{P}_\lambda$ be the projector onto the eigenspace associated with eigenvalue $\lambda$ and let
$\mathsf{C}_G:=\lim_{1/\lambda^-}\big(\mathsf{I}-\mathsf{P}_\lambda\big)\mathsf{R}(z)$. Then
all diagonal entries $c_0$ of $\mathsf{C}_G$ are 0. 
\end{lemma}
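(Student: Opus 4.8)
The plan is to work on the finite tori $G^{\text{Tor}}_N$, where the analogue $\mathsf{C}_N:=\lim_{z\to1/\lambda^-}(\mathsf{I}-\mathsf{P}_{N,\lambda})\mathsf{R}_N(z)$ of Eq.~(\ref{CnDef}) is a genuine finite matrix and $\mathsf{P}_{N,\lambda}=\tfrac1N\mathsf{J}$ because every $G^{\text{Tor}}_N$ is regular. First I would feed the resolvent identity $\mathsf{R}_N(z)=\mathsf{I}+z\mathsf{A}_N\mathsf{R}_N(z)$ into this definition. Since $\mathsf{P}_{N,\lambda}$ commutes with $\mathsf{A}_N$ and $\mathsf{P}_{N,\lambda}\mathsf{R}_N(z)=\tfrac{1}{N(1-z\lambda)}\mathsf{J}$, projecting out the dominant mode turns the identity, in the limit $z\to1/\lambda^-$, into the linear recursion $(\mathsf{C}_N)_{mn}=\delta_{mn}+\tfrac1\lambda\sum_{j\in\mathcal{N}(m)}(\mathsf{C}_N)_{jn}-\tfrac1N$, the $-\tfrac1N$ being the only trace left by the projector (one per row). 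On the diagonal this reads $c_0=1+c_1-\tfrac1N$, which as $N\to\infty$ reproduces the quoted relation $c_1=c_0-1$; the two-step form of the resolvent identity produces the second relation among the $c_{2,i}$ in the same way. This is why the entries of $\mathsf{C}_G$ satisfy the same recursions as those of $\mathsf{R}(z)$ evaluated at $z=1/\lambda$.

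These two recursions are necessary but \emph{not} sufficient: they tie $c_1$ and the $c_{2,i}$ to $c_0$ but leave $c_0$ itself free, so the real work is to compute the diagonal directly. For this I would pass to the spectral (Fourier) representation available on any vertex-transitive $G$, writing $\mathsf{R}_N(z)_{mn}=\tfrac1N\sum_{k}\tfrac{e^{\mathbbm{i}k\cdot(r_m-r_n)}}{1-z\,\omega(k)}$, where the dispersion $\omega(k)$ attains its maximum $\lambda$ only at $k=0$. The projector $\mathsf{P}_{N,\lambda}$ removes exactly the $k=0$ term, so $(\mathsf{C}_N)_{mn}=\tfrac1N\sum_{k\neq0}\tfrac{e^{\mathbbm{i}k\cdot(r_m-r_n)}}{1-\omega(k)/\lambda}$. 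The key observation is that, although each such sum diverges as $N\to\infty$ in low dimension (the $1/|k|^2$ band-edge singularity is only logarithmically integrable in $d=2$), this singular part is the \emph{same} constant for every entry, since $e^{\mathbbm{i}k\cdot(r_m-r_n)}\to1$ as $k\to0$. Hence what converges is the difference $(\mathsf{C}_N)_{mn}-(\mathsf{C}_N)_{mm}=\tfrac1N\sum_{k\neq0}\tfrac{e^{\mathbbm{i}k\cdot(r_m-r_n)}-1}{1-\omega(k)/\lambda}$, whose numerator vanishes like $|k|^2$ and kills the singularity. Taking $N\to\infty$ gives the Brillouin-zone integral representation of $\mathsf{C}_G$ — precisely the form of the square-lattice formula displayed after the main theorem — in which the coincident-point entry ($r_m=r_n$) has integrand $e^{0}-1=0$. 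This forces $c_0=0$, after which the recursions pin down $c_1=-1$ and $\sum_i n_{2,i}c_{2,i}=-\lambda^2$.

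The main obstacle, and the step deserving genuine care, is exactly this band-edge divergence: the individual entries of $(\mathsf{I}-\mathsf{P}_\lambda)\mathsf{R}(z)$ need not converge as $z\to1/\lambda^-$ once $N\to\infty$ (in $d\le2$ they grow like $\log N$), so the limit defining $\mathsf{C}_G$ has to be read as the limit of the entrywise differences from the coincident-point value, equivalently as the lattice Green's function normalised to vanish at the origin — the regularisation standard in the lattice-Green's-function and resistor-network literature cited in the main text. Seen this way, $c_0=0$ is not an extra hypothesis but the defining reference point, and it is consistent with the finite-$N$ identity $c_0^{(N)}=1+c_1^{(N)}-\tfrac1N$ in that $c_0^{(N)}$ and $c_1^{(N)}$ carry the same divergent part, which cancels in $\mathsf{C}_G$. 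I would close by noting that the quantities entering Lemma~\ref{WalkstoHikes} depend on $\mathsf{C}_G$ only through such differences, so the choice of reference is harmless and fixing $c_0=0$ is the natural normalisation.
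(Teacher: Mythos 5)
Your proposal is correct in substance but takes a genuinely different route from the paper, and it is worth saying what each buys. The paper's own proof is a one-line finite-$N$ computation: writing $(\mathsf{I}-\mathsf{P}_\lambda)_{ij}=\delta_{ij}-\tfrac1N$ for a regular graph, it asserts that the diagonal entry $\mathsf{R}(z)_{kk}-\tfrac1N\sum_j\mathsf{R}(z)_{jk}$ vanishes. Read literally at fixed $z<1/\lambda$ and fixed $N$ this is not an identity (at $z=0$ it would say $1=\tfrac1N$); by vertex-transitivity $\mathsf{R}(z)_{kk}=\tfrac1N\sum_i(1-z\lambda_i)^{-1}$ while $\tfrac1N\sum_j\mathsf{R}(z)_{jk}=\tfrac{1}{N(1-z\lambda)}$, so the two sides agree only in their leading divergence as $z\to1/\lambda^-$, and their difference is $\tfrac1N\sum_{\lambda_i\neq\lambda}(1-\lambda_i/\lambda)^{-1}$, which is nonzero at finite $N$ and, on two-dimensional lattices, does not even converge as $N\to\infty$. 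What the paper's computation really establishes is therefore exactly the point you make explicit: subtracting $\mathsf{P}_\lambda\mathsf{R}(z)$ removes an entry-independent divergent part, and the value $c_0=0$ is the normalisation built into the regularised object $\mathsf{C}_G$ --- consistent with the explicit square-lattice integral displayed after the main theorem, whose integrand vanishes identically at coincident points. Your additional contributions are genuine: the derivation of the recursions from $\mathsf{R}_N(z)=\mathsf{I}+z\mathsf{A}_N\mathsf{R}_N(z)$, the observation that they constrain $c_1$ and the $c_{2,i}$ in terms of $c_0$ but cannot fix $c_0$ itself, and the check that the quantities entering Lemma~\ref{WalkstoHikes} are invariant under shifting $\mathsf{C}_G$ by a multiple of the all-ones matrix (so the choice of reference is harmless) are all correct and are not supplied by the paper. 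Two small slips to fix: the $1/|k|^2$ band-edge singularity in $d=2$ is logarithmically \emph{divergent}, not ``logarithmically integrable''; and in $d=1$ the growth is linear in $N$ rather than logarithmic (harmless, since you exclude that case). The trade-off is that the paper's argument, where it applies, is a two-line computation, whereas yours requires the spectral representation; in exchange yours makes the regularisation explicit and actually survives the $N\to\infty$ limit in low dimension, which is the regime the Infinite Sieve Theorem needs.
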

\begin{proof}
Since $G$ is regular $(\mathsf{I}-\mathsf{P}_\lambda\big)_{ij}=(1-1/N)\delta_{i,j}+(1/N)\delta_{i\neq j}$. Then
$$
\Big(\big(\mathsf{I}-\mathsf{P}_\lambda\big)\mathsf{R}(z)\Big)_{kk}=\left(1-\frac{1}{N}\right)\mathsf{R}(z)_{k,k}+\frac{1}{N}\sum_{j\neq k} \mathsf{R}(z)_{j,k}=0.
$$
\end{proof}

\section{All orders of the determinant expansion}\label{DetExpansion}
The expansion of determinants such as $\det(\mathsf{M}_1+\epsilon\mathsf{M}_2)$ around $\det(\mathsf{M}_1)$ is well known. Here we report the slight variant valid when $\mathsf{M}_1$ is singular. Indeed in the case of interest here $\mathsf{M}_1\equiv \mathsf{I}-z\mathsf{A}_G$ which is singular in $z=1/\lambda$. Taking into consideration that the adjugate matrix of a singular matrix with a kernel of dimension 1 is the projector  $\mathsf{P}_{\lambda}$ onto that kernel times a constant which, for us, is asymptotically $\alpha^N$, $N\gg1$, the $k$th order of the expansion of $\det(\mathsf{I}-\lambda^{-1}\mathsf{A}_{G\backslash p})$ for an arbitrary SAP $p$ is:
\begin{align*}
&\text{Order}^{(k)}=\frac{\alpha^N}{\lambda^k}\sum_{\pi\in\Pi(k)}(-1)^{|\pi|}\sum_{j=1}^k\Bigg\{\frac{1}{(\pi_j-1)!j^{\pi_j-1}}\mathrm{Tr}\left[\big(-z \mathsf{P}_{\lambda}\mathsf{B}_p\big)\big(-z \mathsf{C}\mathsf{B}_p\big)^{j-1}\right]\\
&\hspace{27mm}\times\mathrm{Tr}\left[\big(-z \mathsf{C}_G|_p\mathsf{B}_p\big)^{j}\right]^{\pi_j-1}\prod_{m=1\atop m\neq j}^k\mathrm{Tr}\left[\big(-z \mathsf{C}_G|_p\mathsf{B}_p\big)^{m}\right]^{\pi_m}\frac{1}{m!\,m^{\pi_m}}\Bigg\}.
\end{align*}
In this expression, $\Pi(k)$ designates the set of partitions $\pi:=\{\pi_i\}_{1\leq i\leq k}$ of $k$ such that $\sum_i i\pi_i =k$ and $|\pi|=\sum_i \pi_i$. The matrices $\mathsf{C}_G|_p$ and $\mathsf{B}_p$ are as defined in the Infinite Sieve Theorem. 

\section{Error terms in the Infinite Sieve Theorem}\label{ErrorTerms}
The Infinite Sieve Theorem establishes the dominant term in the asymptotic expansion of the fraction of closed walks whose unique right prime factor is a given SAP $p$. This term is dominant only under the limit $\ell\to\infty$, where the closed walks under consideration have diverging length. In this section we evaluate the accompanying error terms which come into consideration for finite length walks. Concretely, these error terms cannot be neglected when this finite length is `too close' to that of the SAP $p$, in a sense which will be made precise below. 

%While it is possible to obtain the error terms on infinite graphs from the Finite Sieve Theorem, it is preferable not to. This is because the error $\mathrm{Err}(\mathcal{H}_\ell,\mathcal{P}_{G\backslash p})$ provided by the theorem results from both the relation between closed walks and hikes--of no interest to us in the perspective of SAP counting--and that between walk multiples of a SAP and closed walks. By focusing solely on error terms stemming from the latter relation, we not only avoid persistant issues due to the fact that hike numbers are not well defined (forcing us to work with unwieldy walk densities) but we also retain all the information needed for refining estimates such as $|\mathcal{W}_{p}(L)|\simeq F_p \lambda^{-\ell(p)}|\mathcal{W}(L)|$, required in e.g. Eq.~(\ref{PiEstimate}).
%

The Infinite Sieve Theorem produces the asymptotic expansion of the extension of Viennot's Lemma to infinite graphs which gives, for the generating function $R_p(z)$ of walks whose last erased loop is the SAP $p$,
$$
R_p(z)=z^{\ell(p)}\det\big(\mathsf{I}+z\mathsf{R}(z)\mathsf{B}_p\big).
$$
The various divergences encountered in the asymptotic expansion were specifically avoided with the following form,
\begin{align}\label{RpAppendix}
R_p(z)&=R(z)\left(\frac{z^{\ell(p)}}{R(z)}\det\big(\mathsf{I}+z\mathsf{R}(z)\mathsf{B}_p\big)\right).
\end{align}
This presentation of $R_p(z)$ is also supported by its combinatorial meaning: with Lemma~\ref{WalkstoHikes} the first term relates to the density of walks amongst the hikes, while the terms in parentheses give rise to the finite asymptotic fraction of walks whose last erased loop is $p$ among all closed walks. 

As we have seen, we should not expect any of the generating functions appearing here to be meromorphic in general. For this reason, we should only use properties of products of generating functions rather than ratios. One such standard property is recalled below:

\begin{lemma}\label{FGLemma}
Let $F(z)=\sum_{\ell\geq 0}\lambda^\ell f(\ell) z^{\ell}$ with $\lambda$ real and $f(\ell)$ a function such that $\lim_{\ell\to\infty}f(\ell)$ exists and is finite. Then for any function $G(z)$ of $z$ such that $G(1/\lambda)$ is finite,
$$
\big(F(z) G(z)\big)[L]=\left(G(1/\lambda)+\sum_{k>0} \frac{\nabla^{k}[f](L)}{\lambda^k\,k!\,f(L)}G^{(k)}(1/\lambda)\right)\times F(z)[L],
$$
where $G^{(k)}(1/\lambda)$ stands for the $k$th derivative of $G(z)$ evaluated in $1/\lambda$.
\end{lemma}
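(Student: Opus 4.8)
The plan is to use the \emph{linearity} of the identity in $G$ together with the Taylor expansion of $G$ about the point $z=1/\lambda$. Since the right-hand side involves only the numbers $G^{(k)}(1/\lambda)$, I would first assume --- as is implicit in the statement and satisfied in the intended application --- that $G$ is analytic on a disc about $1/\lambda$ large enough to contain the origin, so that
\[
G(z)=\sum_{k\geq 0}\frac{G^{(k)}(1/\lambda)}{k!}\,(z-1/\lambda)^{k}
\]
converges on a neighbourhood of $0$. The strategy is then to apply the coefficient functional $(\,\cdot\,)[L]$ of the product with $F$ term by term in $k$, reducing everything to a single family of coefficient computations.

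The core computation is the evaluation of $\big(F(z)\,(z-1/\lambda)^{k}\big)[L]$ for each $k\geq0$. Here I would expand $(z-1/\lambda)^{k}$ by the binomial theorem and substitute the defining coefficients $F(z)[m]=\lambda^{m}f(m)$ of $F$ (with the convention $f(m)=0$ for $m<0$, forced by $F$ being a power series). After cancelling the powers of $\lambda$ this leaves an alternating binomial sum of the values $f(L),f(L-1),\dots,f(L-k)$, and the key algebraic observation is that this sum is exactly $\lambda^{L-k}$ times the $k$-th iterated backward difference of $f$ at $L$:
\[
\big(F(z)\,(z-1/\lambda)^{k}\big)[L]=\lambda^{L-k}\,\nabla^{k}[f](L),
\]
which is nothing but Newton's backward-difference (shift-operator) formula repackaging the binomial weights. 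Writing $\lambda^{L-k}\nabla^{k}[f](L)=\lambda^{-k}\big(\nabla^{k}[f](L)/f(L)\big)\,F(z)[L]$ then expresses each term as an explicit multiple of $F(z)[L]$.

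Reassembling is routine: substituting the last display into $\big(F(z)G(z)\big)[L]=\sum_{k\geq0}\frac{G^{(k)}(1/\lambda)}{k!}\big(F(z)(z-1/\lambda)^{k}\big)[L]$, factoring out $F(z)[L]$, and separating the $k=0$ term (for which $\nabla^{0}[f](L)/f(L)=1$, producing the bare $G(1/\lambda)$) yields the claimed formula.

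The step I expect to be the genuine obstacle is justifying that the coefficient functional may be taken through the infinite sum over $k$. This is not merely formal: the sum over $k$ is infinite and its convergence is controlled by the decay of $G^{(k)}(1/\lambda)/k!$, hence by the analyticity of $G$ near $1/\lambda$, and not by the finiteness of $G(1/\lambda)$ alone. I would make it rigorous by truncating: set $G_{K}(z)=\sum_{k=0}^{K}\frac{G^{(k)}(1/\lambda)}{k!}(z-1/\lambda)^{k}$ and note that the functional $H\mapsto(FH)[L]=\sum_{m=0}^{L}F(z)[m]\,H(z)[L-m]$ depends only on the finitely many coefficients $H(z)[0],\dots,H(z)[L]$. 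It therefore suffices that these low-order Taylor coefficients of $G_{K}$ at the origin converge to those of $G$, which holds precisely when the Taylor series of $G$ about $1/\lambda$ represents $G$ on a disc containing $0$ --- i.e.\ when $G$ is analytic in $|z-1/\lambda|<\rho$ with $\rho>1/\lambda$. This analyticity, rather than the mere finiteness of $G(1/\lambda)$, is the hypothesis actually used, and it is supplied in the application by the Infinite Sieve Theorem. Granting it, $(FG_{K})[L]\to(FG)[L]$ and the term-by-term identity is legitimate.
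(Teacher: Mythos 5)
Your overall route --- Taylor--expand $G$ about $z=1/\lambda$, evaluate $\big(F(z)(z-1/\lambda)^k\big)[L]$ by the binomial theorem, and reassemble --- is the natural one; the paper offers no proof of this lemma (it is invoked as a ``standard property''), so there is nothing to compare against, and your closing remark that genuine analyticity of $G$ on a disc about $1/\lambda$ containing the origin (not mere finiteness of $G(1/\lambda)$) is what legitimises the term-by-term interchange is a correct and worthwhile sharpening of the hypothesis.

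However, your key algebraic identity is wrong by a sign, and this matters because the corrected computation does not reproduce the statement as printed. Carrying out the binomial expansion you describe gives
\begin{equation*}
\big(F(z)(z-1/\lambda)^k\big)[L]=\sum_{j=0}^{k}\binom{k}{j}\Big(-\tfrac{1}{\lambda}\Big)^{j}\lambda^{L-k+j}f(L-k+j)=(-1)^{k}\,\lambda^{L-k}\,\nabla^{k}[f](L),
\end{equation*}
with the paper's convention $\nabla[f](x)=f(x)-f(x-1)$; the factor $(-1)^k$ is absent from your display. The quickest sanity check is $k=1$: $\big(F(z)(z-1/\lambda)\big)[L]=\lambda^{L-1}f(L-1)-\lambda^{L-1}f(L)=-\lambda^{L-1}\nabla[f](L)$. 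Reassembling correctly therefore yields
\begin{equation*}
\big(F(z)G(z)\big)[L]=\left(G(1/\lambda)+\sum_{k>0}\frac{(-1)^{k}\,\nabla^{k}[f](L)}{\lambda^{k}\,k!\,f(L)}\,G^{(k)}(1/\lambda)\right)F(z)[L],
\end{equation*}
which differs from the lemma as stated. Indeed the stated identity already fails for $G(z)=z$: the left side is $\lambda^{L-1}f(L-1)$ while the stated right side is $\lambda^{L-1}\big(2f(L)-f(L-1)\big)$. So either the lemma must carry the $(-1)^k$ (equivalently, $\nabla$ must be read as $f(x-1)-f(x)$), or your computation must be repaired to match it --- both cannot stand. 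You should make the sign explicit and note that the discrepancy is immaterial for the paper's subsequent use of the lemma, which only concerns the order of magnitude of the correction terms $\nabla^{k}[f_w](\ell-\ell(p))/f_w(\ell)$. A last small point: the division by $f(L)$ silently assumes $f(L)\neq 0$, which fails at odd lengths on bipartite lattices and should be flagged as a hypothesis.
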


Returning to Eq.~(\ref{RpAppendix}), we introduce the walk correction function $f_w(\ell):=R(z/\lambda)$, so that the number of closed walks of length $\ell$ on $G$ is exactly $\lambda^\ell f_w(\ell)$. On the square lattice, we have 
$$
f_w(2\ell)=\binom{2\ell}{\ell}^24^{-2\ell}\sim\frac{1}{\ell\pi}~\text{ for }\ell\gg1,
$$
while $f_w(2\ell+1)=0$, and $\lim_{\ell\to\infty }f_w(\ell)=0$. In general, on a $d$ dimensional lattice $f_w(\ell)$ is on the order of $\ell^{-d/2}$ for $\ell\gg 1$.

We may now use Lemma~\ref{FGLemma} on Eq.~(\ref{RpAppendix}). Together with the Infinite Sieve Theorem, this gives 
$$
[z^\ell]R_p(z)=\left(\frac{F_p}{\lambda^{\ell(p)}}+\sum_{k\geq 1}^\infty\frac{\nabla^k[f_w](\ell-\ell(p))}{f_w(\ell)\lambda^k\,k!}\,\text{Frac}_p^{(k)}(1/\lambda)\right)[z^\ell]R(z),
$$
with $\text{Frac}_p(z):=R_p(z)/R(z)$ and $F_p$ as per Eq.~(\ref{FractionWalk}). Let us denote, 
$$
\text{Err}_p(2\ell):=\sum_{k\geq 1}^\infty\frac{\nabla^k[f_w](2\ell-\ell(p))}{f_w(2\ell)\lambda^k\,k!}\,\text{Frac}_p^{(k)}(1/\lambda),
$$
and observe that since $\text{Frac}_p^{(k)}(1/\lambda)$ does not depend on $\ell$, we may now estimate the decay of the correction terms with respect to the dominant  contribution $F_p/\lambda^{\ell(p)}$ as $\ell\to\infty$. Taking $f_w(\ell)\sim \ell^{-d/2}$ as a guide on a $d$ dimensional lattice, two situations arise:
\begin{itemize}
\item[i) ] If $\ell\gg1 $ and $\ell-\ell(p)\gg 1$, then the error terms decrease proportionally to $\big(\ell-\ell(p)\big)^{-1}$ on all $d$-dimensional vertex transitive lattices; 
\item[ii)] If $\ell\gg 1$ but $\ell-\ell(p)$ is on the order of 1, then the error terms \emph{actually increase} with $\ell$, as $\ell^{d/2}$.
\end{itemize}
This analysis indicates that $[z^\ell]R(z)\times F_p/\lambda^{\ell(p)}$ is a good approximation to $[z^\ell]R_p(z)$ when $\ell$ is much larger than $\ell(p)$. This confirms the discussion of \S\ref{Discussion}: using $F_p/\lambda^{\ell(p)}$ to obtain an estimate $\pi(L)$ via Eq.~(\ref{PiEstimate}) is misguided precisely because most of the SAPs fall in situation ii), where using $F_p/\lambda^{\ell(p)}$ to estimate $[z^\ell]R_p(z)$ is outright wrong.

\section{Data table}\label{Data}
Table~\ref{Table:data} gives all computed values of 
$$ 
S(L):=\sum_{p:\,\text{SAP}\atop \ell(p)\leq L} \frac{F_p}{4^{\ell(p)}}
$$
as a function of $L$ and evaluated on the square lattice. Although most tabulated values of $S(L)$ were \emph{computed analytically}, we here report only numerical results rounded at $10^{-4}$ owing to length concerns. At length 32, $S(L)$ requires computing $F_p$ values for $3,484,564,613$ self-avoiding polygons.

\begin{table}
\centering
\renewcommand{\arraystretch}{1.2}
\begin{tabular}{|c|c|c|c|c|c|c|c|}
\hline
\hline
\bf Length $L$ &2*&4*&6*&8*&10*&12*&14*\\
\hline 
  $S(L)$  &0.5& 0.6473& 0.7093& 0.7493& 0.7774& 0.7984& 0.8149\\
\hline
\bf Length $L$ &16*&18*&20*&22*&24*&26*&28*\\
\hline
 $S(L)$ &0.8282& 0.8392& 0.8485& 0.8565& 0.8635& 0.8696& 0.8751\\
 \hline
\bf Length $L$ &30&32&&&&&\\
\hline
 $S(L)$& 0.8799& 0.8843&&&&&\\
\hline
\hline
\end{tabular}
\caption{\label{Table:data}Table of value for the sum $S(L)$ as a function of $L$. The asterisk $\ast$ indicates that all calculations for this length were analytical.}
\end{table}

\end{appendices}

\bibliographystyle{elsarticle-num}
%\bibliography{references}

\end{document}